\newtheorem{theorem}{Theorem}[section]
\newtheorem{corollary}[theorem]{Corollary}
\newtheorem{lemma}[theorem]{Lemma}
\newtheorem{proposition}[theorem]{Proposition}
\newtheorem{definition}[theorem]{Definition}
\newcommand{\R}{\mathbb{R}}
\newcommand{\C}{\mathbb{C}}
\newcommand{\Abs}[1]{\left\lvert #1 \right\rvert}
\newcommand{\isomorphic}{\cong}
\newcommand{\brackets}[1]{\langle #1 \rangle}
\renewcommand{\bar}[1]{\overline{#1}}
\newcommand{\interior}[1]{\accentset{\circ}{#1}}
\renewcommand{\Re}{\text{\textnormal{Re}}}
\renewcommand{\Im}{\text{\textnormal{Im}}}
\newcommand{\norm}[1]{\left\lVert#1\right\rVert}
\newcommand{\tr}{\text{\textnormal{tr}}}
\newcommand\restr[2]{{
		\left.\kern-\nulldelimiterspace 
		#1 
		\vphantom{\big|} 
		\right|_{#2} 
}}
\begin{document}

\title{The Brown measure of non-Hermitian sums of projections}
\author{Max Sun Zhou}
\date{}

\begin{abstract}
	We compute the Brown measure of the non-normal operators $X = p + i q$, where $p$ and $q$ are Hermitian, freely independent, and have spectra consisting of $2$ atoms. The computation relies on the model of the non-trivial part of the von Neumann algebra generated by 2 projections as $2 \times 2$ random matrices. We observe that these measures are supported on hyperbolas and note some other properties related to their atoms and symmetries. 
\end{abstract}

\maketitle

\section{Introduction}
Let $M$ be a von Neumann algebra with a faithful, normal, tracial state $\tau$. Let $X \in (M, \tau)$. The Brown measure of $X$, introduced in \cite{BrownPaper}, is a complex Borel probability measure supported on the spectrum of $X$. When $X$ is normal, the Brown measure of $X$ is the spectral measure. When $X$ is a random matrix, the Brown measure of $X$ is the empirical spectral distribution of $X$.

The first interesting explicit computations for the Brown measure of non-trivial operators were provided in \cite{HaagerupPaper1}, nearly 20 years after the Brown measure was first introduced. This class of operators are the ``$R$-diagonal operators'' which include Voiculescu's circular operator. In recent years, there has been much research in computing the Brown measure for a wide variety of families of operators (see \cite{HoPaper}, \cite{HoZhongPaper}, and \cite{BelinschiPaper1}  for some examples). 

In this paper, we will explicitly compute the Brown measure of operators of the form $X = p + iq$, where $p, q \in (M, \tau)$ are Hermitian, freely independent, and have spectra consisting of $2$ atoms. Note that when $p$ or $q$ only has $1$ atom in their spectra (i.e. is a real number), then $X$ is normal, so the Brown measure can be trivially computed. In the course of studying these operators, we will deduce that when $p$ and $q$ have $2$ atoms, then $X$ is non-normal. 

The key tool is the model of the von Neumann algebra generated by $2$ projections from \cite{VoiculescuPaper} to study the non-trivial part of this space. This reduces the computation in $(M, \tau)$ to $\left( M_2 \left( L^{\infty} ((0, 1), \nu^*) \right) , \mathbb{E}_{\nu^*}[ \frac{1}{2} \tr ]  \right)$ for some Borel probability measure $\nu^*$ on $(0, 1)$. To completely determine the Brown measure, we use freeness of $p$ and $q$ to explicitly compute $\nu^*$ and some other relevant parameters.

We will show these measures are supported on hyperbolas in the complex plane and observe some other interesting properties about their atoms and symmetries. 

One important aspect of the Brown measure is that it is the candidate for the limit of the empirical spectral distribution for random matrices: Given a random matrix model $X_n \in M_n(\C)$, suppose that $X_n$ converges in $*$-moments to $X \in (M, \tau)$. The Brown measure of $X$ is typically the limit of the empirical spectral distributions of the $X_n$ (see \cite{SniadyPaper} for a precise statement of this). However, this does not always hold, as \cite[][Chapter 11, Exercise 15]{SpeicherBook} provides a simple counterexample. On the other hand, two notable results that demonstrate this principle are the Circular Law in \cite{TaoPaper} and the Single Ring Theorem in \cite{GuionnetPaper}. 

In a following paper, we will consider a natural random matrix model $X_n$ corresponding to $X = p + i q$ and prove that the empirical spectral distributions of the $X_n$ converge to the Brown measure of $X$. 

The rest of the paper is organized as follows. In Section \ref{sec:preliminaries}, we recall the definition of the Brown measure, the model of the algebra generated by $2$ projections, and some free probability transforms. In Section \ref{sec:notation}, we fix some notation and give an outline of the steps of the computation. In Section \ref{sec:brown_measure_compute}, we compute the Brown measure of $X$ up to some parameters that depend on the joint law of $p$ and $q$ (Proposition \ref{prop:brown_measure_p+iq_no_nu}). In Section \ref{sec:atoms_weights}, we use the freeness of $p$ and $q$ to compute these parameters (Propositions \ref{prop:atoms_weights} and \ref{prop:nu_measure}) and also deduce that $X$ is non-normal (Corollary \ref{cor:projections_not_normal}). In Section \ref{sec:full_brown_measure}, we combine these results to state the Brown measure of $X$ (Theorem \ref{thm:brown_measure_p+iq}). We also deduce some properties and provide accompanying figures to illustrate these properties. 


\section{Preliminaries}
\label{sec:preliminaries}

\subsection{Brown measure}
In this subsection, we recall the definition of the Brown measure of an operator $X \in (M, \tau)$ and describe how to compute it. 

First, we recall the definition of the Fuglede-Kadison determinant, first introduced in \cite{KadisonPaper}: 

\begin{definition}
	Let $x \in (M, \tau)$. Let $\mu_{\Abs{x}}$ be the spectral measure of $\Abs{x} = (x^* x)^{1/2}$. Then, the \textbf{Fuglede-Kadison determinant} of $x$, $\Delta(x)$, is given by: 
	\begin{equation}
		\Delta(x) = \exp \left[ \int_{0}^{\infty} \log t \, d \mu_{\Abs{x}}(t) \right]   \,.
	\end{equation}
\end{definition}

When $x = X_n \in (M_n(\C), \frac{1}{n} \tr)$, then $\Delta(X_n) = \Abs{\det(X_n)}^{1/n}$. Thus, the Fuglede-Kadison determinant is a generalization of the normalized, positive determinant of a complex matrix. 

By applying the functional calculus to a decreasing sequence of continuous functions on $[0, \infty)$ converging to $\log t$, we have: 

\begin{equation}
	\log \Delta(x) = \int_{0}^{\infty} \log t \, d \mu_{\Abs{x}}(t) = \frac{1}{2} \int_{0}^{\infty} \log t \, d \mu_{\Abs{x}^2}(t) \,.
\end{equation}

In practice, it is more convenient to compute the right-hand side to compute $\Delta(X)$. 

Then, the Brown measure is the following distribution:

\begin{definition}
	Let $x \in (M, \tau)$. The \textbf{Brown measure} of $x$ is defined as:
	\begin{equation}
		\mu_x = \frac{1}{2 \pi}\nabla^2 \log \Delta(z - x) \,.
	\end{equation}
\end{definition} 

In \cite{BrownPaper}, some properties of the Brown measure are proven (see also \cite{SpeicherBook}, \cite{TaoBook}, \cite{HaagerupPaper} for more exposition on some of the basic results). In particular, the Brown measure of $x$ is a probability measure supported on the spectrum of $x$. When $x$ is normal, then the Brown measure is the spectral measure. When $x$ is a random matrix, then the Brown measure is the empirical spectral distribution.

\subsection{The von Neumann algebra generated by $2$ projections}
\label{subsec:two_projections}

In this subsection, we recall some results from \cite[][Section 12]{VoiculescuPaper} characterizing the tracial von Neumann algebra generated by $2$ projections. Let $(M, \tau)$ be the von Neumann algebra generated by two projections $p$ and $q$. 

Let $(M, \tau)$ be the von Neumann algebra generated by two projections $p$ and $q$. 

Consider the following elements of $M$: 
\begin{equation}
	\label{eqn:projections_e}
	\begin{aligned}
		e_{0 0} & = (1 - p) \wedge (1 - q) \\
		e_{0 1} &= (1 - p) \wedge q \\
		e_{1 0} &= p \wedge (1 - q) \\
		e_{1 1} &= p \wedge q \\
		e &= 1 - (e_{0 0} + e_{0 1} + e_{1 0} + e_{1 1}  ) \,.
	\end{aligned}
\end{equation}
These elements are central, mutually orthogonal projections and 
\begin{equation}
	e_{0 0} + e_{0 1} + e_{1 0} + e_{1 1} + e = 1 \,.
\end{equation}
If we consider the matrix of $m \in M$ with respect to the $e_{i j}, e$ (i.e. the $5 \times 5$ matrix with entries of the form $f m f'$ where $f, f' \in \{ e_{i, j}, e  \}$), then this matrix is diagonal. Further, the 4 diagonal terms $e_{i j} m e_{i j}$ are constants. Hence, the only interesting part of $M$ is the subalgebra $e M e$. In particular,
\begin{equation}
	\begin{aligned}
		p & = p \wedge (1 - q) + p \wedge q  + e p e = e_{1 0} + e_{1 1} + e p e \\
		q &= (1 - p) \wedge q + p \wedge q + e q e = e_{0 1} + e_{1 1} + e q e \,.
	\end{aligned}
\end{equation}
When $e \neq 0$, consider $(e M e, \restr{\tau}{eM e})$ as a von Neumann subalgebra with identity $e$ and $\restr{\tau}{eM e}(eme) = \frac{\tau(eme)}{\tau(e)} $. Then, 
\begin{equation}
	(e M e, \restr{\tau}{eM e}) \isomorphic \left( M_2 \left( L^{\infty} ((0, 1), \nu^*) \right) , \mathbb{E}_{\nu^*}[ \frac{1}{2} \tr ]  \right)          \,.
\end{equation}
where $\nu^*$ is a Borel measure on $(0, 1)$. 

For any $m \in M$, let $\tilde{m} = e m e \in e M e$. The isomorphism has the following correspondence between $\tilde{m} \in e M e$ and matrix-valued functions of $t \in (0, 1)$: 
\begin{equation}
	\begin{aligned}
		\tilde{p} & \leftrightarrow 
		\begin{pmatrix}
			t & (t - t^2)^{1/2} \\
			(t - t^2)^{1/2} & 1 - t
		\end{pmatrix} \\ 
		\tilde{q} & \leftrightarrow 
		\begin{pmatrix}
			1 & 0 \\
			0 & 0
		\end{pmatrix} .
	\end{aligned} 
\end{equation}
The measure $\nu^*$ is recovered by noting that for the central element $x = p q p + (1 - p)(1 - q)(1 - p) = 1 - p - q + pq + qp$, 
\begin{equation}
	\tilde{x} \leftrightarrow 
	\begin{pmatrix}
		t & 0 \\
		0 & t
	\end{pmatrix} .
\end{equation}
Hence, $\nu^*$ is the spectral measure of the element $\tilde{x}$ in $(e M e, \restr{\tau}{eM e})$.

Observe that with the change of variables $t = \cos^2 \theta$, $\theta \in (0, \pi / 2)$, 
\begin{equation}
	\tilde{p} = R_\theta \tilde{q} R_\theta^{-1} \,,
\end{equation}
where $R_\theta \in M_2(\C)$ is the rotation matrix with angle $\theta$. 

In practice, we will work with $\theta$ instead of $t$. Let $(\cos^2)^{-1}: [0, 1] \to [0, \pi / 2]$ be the inverse of $\cos^2 : [0, \pi / 2] \to [0, 1]$ and consider the measure $\nu$ on $(0, \pi / 2)$ given by:
\begin{equation}
	\nu = \left( (\cos^2)^{-1}\right) _{*}(\nu^*) \,.
\end{equation}
Then, we will use the isomorphism:
\begin{equation}
	(e M e, \restr{\tau}{eM e}) \isomorphic \left( M_2 \left( L^{\infty} ((0, \pi / 2), \nu) \right) , \mathbb{E}_{\nu}[ \frac{1}{2} \tr ]       \right) \,.
\end{equation}
Let us highlight a few facts about this algebra. First, we explain why in the matrix algebras the domains are open instead of closed. This is because of the following fact: 

\begin{lemma}
	\label{lem:nu_0_1}
	The measure $\nu^*$ does not have atoms at $0$ or $1$, i.e. $\nu^* (\{0, 1\}) = 0$. This is equivalent to $\nu$ not having atoms at $0$ or $\pi / 2$.
\end{lemma}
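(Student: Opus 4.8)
The plan is to relate the putative atoms of $\nu^*$ at $0$ and $1$ to the spectral projections of $\tilde x$ at $0$ and $1$, and then to identify those spectral projections as ordinary infimum/supremum projections inside $M$ which, by the very definition of $e$, vanish on $eMe$. Concretely, recall from the excerpt that $\nu^*$ is the spectral measure of $\tilde x = exe$ in $(eMe, \restr{\tau}{eMe})$, where $x = 1 - p - q + pq + qp$ is central in $M$. So $\nu^*(\{0\}) = \restr{\tau}{eMe}\big(\mathbf 1_{\{0\}}(\tilde x)\big)$ and $\nu^*(\{1\}) = \restr{\tau}{eMe}\big(\mathbf 1_{\{1\}}(\tilde x)\big)$, and it suffices to show both spectral projections are $0$ in $eMe$. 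The equivalence with the statement about $\nu$ is immediate since $\nu = ((\cos^2)^{-1})_*\nu^*$ is a pushforward under a homeomorphism $[0,1]\to[0,\pi/2]$ sending $\{0,1\}$ to $\{0,\pi/2\}$ bijectively.

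First I would compute $x$ more explicitly. Writing $x = (1-p)(1-q)(1-p) + pqp$, one checks that $0 \le x \le 1$: each summand is of the form $a^*a$ with norm at most $1$, and the two summands live on orthogonal ranges since $(1-p)(1-q)(1-p)$ has range in $\im(1-p)$ while $pqp$ has range in $\im p$. Hence $\mathbf 1_{\{0\}}(x)$ and $\mathbf 1_{\{1\}}(x)$ make sense as spectral projections of $x$ in $M$, and because $x$ is central these are central projections. The key identities I expect to need are: $\ker x = \ker\big((1-q)(1-p)\big) \cap \ker(qp)$, and dually the eigenspace of $x$ for eigenvalue $1$ is $\ker(1-x) = \ker\big(q(1-p)\big)\cap\ker\big((1-q)p\big)$. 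Unwinding these kernels in terms of ranges of $p$ and $q$ should yield $\mathbf 1_{\{0\}}(x) = (1-p)\wedge(1-q) + p\wedge q = e_{00} + e_{11}$ and $\mathbf 1_{\{1\}}(x) = p\wedge(1-q) + (1-p)\wedge q = e_{10} + e_{01}$. (These are exactly the "expected" answers: on each of the four corners $e_{ij}$ the operators $p,q$ are scalar, so $x$ is the scalar $1-i-j+ij+ji$, which is $1$ when $i=j$ and $0$ when $i\neq j$.)

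With those identities in hand, the conclusion is immediate: by definition $e = 1 - (e_{00}+e_{01}+e_{10}+e_{11})$ is orthogonal to each $e_{ij}$, so $e \cdot \mathbf 1_{\{0\}}(x) = e(e_{00}+e_{11}) = 0$ and likewise $e\cdot \mathbf 1_{\{1\}}(x) = 0$. Since $x$ is central, $\mathbf 1_{\{0\}}(\tilde x) = \mathbf 1_{\{0\}}(exe) = e\,\mathbf 1_{\{0\}}(x)\,e = 0$ in $eMe$, and similarly at $1$; therefore $\nu^*(\{0\}) = \nu^*(\{1\}) = 0$. The main obstacle is the middle step — rigorously justifying the computation of the kernels $\ker x$ and $\ker(1-x)$, i.e. that the only way $x\xi = 0$ or $x\xi=\xi$ can happen is via the lattice-theoretic meets $p\wedge q$, etc. I would handle this by a positivity/Pythagoras argument: if $x\xi=0$ then $\langle x\xi,\xi\rangle = \|(1-q)(1-p)\xi\|^2 + \|qp\xi\|^2 = 0$ forces $(1-p)\xi \in \ker(1-q) = \im q$ and $p\xi \in \ker q = \im(1-q)$; combining with $\xi = p\xi + (1-p)\xi$ and a short case analysis (or the corresponding argument applied to $1-\xi$-type decompositions) pins $\xi$ down to lie in $\im e_{00} \oplus \im e_{11}$, and symmetrically for eigenvalue $1$. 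An alternative, perhaps cleaner, route avoids kernels entirely: work in the $5\times 5$ block decomposition already set up in the excerpt, note $x$ is block-diagonal with the four corner blocks equal to the scalars $0,0,1,1$ and the $eMe$-block equal to $\tilde x$, and invoke that $\nu^*$ is by construction the spectral measure of the $eMe$-block alone — so any atom of $\nu^*$ at $0$ or $1$ would be an atom of the spectral measure of $\tilde x$ within $eMe$, which one rules out by showing $\tilde x$ has no eigenvalue $0$ or $1$, e.g. because $\tilde p$ and $\tilde q$ have no common invariant rank-one subspace corresponding to the degenerate rotation angles $\theta = 0, \pi/2$.
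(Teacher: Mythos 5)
Your main route is correct, and it is genuinely different from the paper's. The paper never computes the spectral projections of $x$: it instead evaluates $\tau\left(\left(\tilde{q}\tilde{p}\tilde{q}\right)^n\right)$ in the matrix model as the $n$-th moment of $\nu^*$, lets $n \to \infty$ so that the left side tends to $\tau(\tilde{p}\wedge\tilde{q}) = \tau(e \wedge (p\wedge q)) = 0$ (orthogonality of $e$ with $e_{11}$) while the right side decreases to $\nu^*(\{1\})$, and argues symmetrically with $(1-\tilde{q})\tilde{p}(1-\tilde{q})$ for the atom at $0$. You instead identify $\chi_{\{0\}}(x)$ and $\chi_{\{1\}}(x)$ exactly as sums of corner projections $e_{ij}$ and note that these are killed by $e$. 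Your sketched kernel argument does close: writing $x = (qp)^*(qp) + \left((1-q)(1-p)\right)^*\left((1-q)(1-p)\right)$ and $1 - x = \left((1-q)p\right)^*\left((1-q)p\right) + \left(q(1-p)\right)^*\left(q(1-p)\right)$, each kernel is the intersection of the kernels of the two factors, and the decomposition $\xi = p\xi + (1-p)\xi$ pins $\xi$ into a sum of two meets. Both proofs ultimately rest on the same fact, namely that $e$ is orthogonal to every $e_{ij}$, but yours is more self-contained (no appeal to the matrix model or to the limit $\tau((pqp)^n) \to \tau(p\wedge q)$), at the price of the kernel analysis. Two caveats. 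First, your labels are swapped: as your own parenthetical computation shows, $x = 1$ on $e_{00}$ and $e_{11}$ and $x = 0$ on $e_{01}$ and $e_{10}$, so in fact $\chi_{\{1\}}(x) = e_{00} + e_{11}$ and $\chi_{\{0\}}(x) = e_{01} + e_{10}$, and the Pythagoras argument for $\ker x$ lands you in $\im(e_{01}) \oplus \im(e_{10})$, not $\im(e_{00}) \oplus \im(e_{11})$; this is harmless since either sum is orthogonal to $e$, but it should be fixed. Second, the ``alternative, cleaner route'' at the end is not really an alternative: the assertion that $\tilde{x}$ has no eigenvalue $0$ or $1$ with spectral projection of positive trace \emph{is} the lemma, and any precise version of ``no common invariant rank-one subspace at $\theta = 0, \pi/2$'' collapses back onto either your main argument or the paper's.
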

\begin{proof}
	Let $\tilde{p}  = e p e$ and $\tilde{q} = e q e$. Since $e$ commutes with $p$ and $q$, $\tilde{p} = p \wedge e$ and $\tilde{q} = q \wedge e$. 
	
	Consider $\tau( (\tilde{q} \tilde{p} \tilde{q})^n) \to \tau(\tilde{p} \wedge \tilde{q}) = \tau(e \wedge (p \wedge q))$ as $n \to \infty$. Recall that $e$ and $p \wedge q$ are mutually orthogonal, so $\tau(e \wedge (p \wedge q)) = 0$. Under the isomorphism, computation shows that
	\begin{equation}
		\tau((\tilde{q} \tilde{p} \tilde{q})^n) = \int_{0}^{1} t^n \, d \nu^*(t) \,.
	\end{equation}
	As $n \to \infty$ the integral on the right-hand side decreases to $\nu^*(\{1\})$. Hence, $\nu^*(\{1\}) = 0$.
	
	A similar argument considering $\tau(((1 - \tilde{q})(\tilde{p})(1 - \tilde{q}))^n) \to \tau((1 -\tilde{ q}) \wedge \tilde{p} )$ using that $e$ and $(1 - q) \wedge p$ are mutually orthogonal shows that $\nu^* (\{0\}) = 0$.	
	
	Finally, we note that from the definition of pushforward measure, $\nu^*(\{0, 1\}) = \nu(\{0, \pi / 2\})$.
\end{proof}

Next, we note that on $e M e$, $e p e$ and $e q e$ have trace $1/2$. This follows from $e$ being mutually orthogonal to all of the $e_{i j}$:

\begin{lemma}
	\label{lem:trace_1/2}
	Let $\tau$ be the trace on $e M e $. Then, 
	$\tau(e p e) = \tau(e q e) = 1 / 2$.
\end{lemma}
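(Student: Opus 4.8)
The plan is to reduce everything to the corner algebra $eMe$ and then exploit the modular (parallelogram) identity for two projections. Write $\tilde{p} = epe$ and $\tilde{q} = eqe$. Since $e$ is central it commutes with $p$ and $q$, so $\tilde{p} = pe$ and $\tilde{q} = qe$ are genuine projections in $eMe$ with $\tilde{p} \le p$, $\tilde{q} \le q$, and $e - \tilde{p} = (1-p)e$, $e - \tilde{q} = (1-q)e$. I will work throughout with the normalized trace $\rho = \restr{\tau}{eMe}$, so that $\rho(e) = 1$ and the claim becomes $\rho(\tilde{p}) = \rho(\tilde{q}) = 1/2$.

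The key step is to show that all four ``corner'' meets of $\tilde{p}$ and $\tilde{q}$ in $eMe$ vanish:
\[
\tilde{p} \wedge \tilde{q} \;=\; \tilde{p} \wedge (e - \tilde{q}) \;=\; (e - \tilde{p})\wedge \tilde{q} \;=\; (e - \tilde{p})\wedge(e-\tilde{q}) \;=\; 0 .
\]
For the first one: any projection $r$ with $r \le \tilde{p}$ and $r \le \tilde{q}$ satisfies $r \le p$, $r \le q$ and $r \le e$, hence $r \le (p \wedge q)\wedge e = e_{1 1} \wedge e = 0$ because $e$ is orthogonal to $e_{1 1}$; moreover such an $r$ lies in $eMe$ since $r \le e$, so the meet computed in $eMe$ is the same as the meet computed in $M$. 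The other three are identical, using $r \le p \wedge (1-q) = e_{1 0}$, $r \le (1-p)\wedge q = e_{0 1}$, and $r \le (1-p)\wedge(1-q) = e_{0 0}$ respectively, together with the orthogonality of $e$ to each $e_{i j}$. Dually, since $a \vee b = e - (e-a)\wedge(e-b)$ inside $eMe$, all four corresponding joins equal $e$; in particular $\tilde{p} \vee \tilde{q} = e$ and $\tilde{p} \vee (e - \tilde{q}) = e$.

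Now I invoke the modular identity $\tau(a \vee b) + \tau(a \wedge b) = \tau(a) + \tau(b)$ for projections $a,b$ in a finite von Neumann algebra (which follows from the Murray--von Neumann parallelogram equivalence $a \vee b - b \sim a - a\wedge b$ together with traciality). Applying it in $(eMe, \rho)$ to the pair $(\tilde{p}, \tilde{q})$ gives $1 + 0 = \rho(\tilde{p}) + \rho(\tilde{q})$, and applying it to the pair $(\tilde{p}, e - \tilde{q})$ gives $1 + 0 = \rho(\tilde{p}) + (1 - \rho(\tilde{q}))$, i.e.\ $\rho(\tilde{p}) = \rho(\tilde{q})$. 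Combining the two yields $\rho(\tilde{p}) = \rho(\tilde{q}) = 1/2$, which is the assertion.

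I expect the only real obstacle to be the bookkeeping in the second step: checking that the meets taken inside the subalgebra $eMe$ agree with the corresponding meets in $M$ and that centrality of $e$ is used at the right places; after that the modular identity finishes the argument in one line. (Alternatively, the conclusion can be read off immediately from the matrix model of Section \ref{subsec:two_projections}, since $\tilde{p}$ and $\tilde{q}$ correspond to $2\times 2$ matrix-valued functions of constant trace $1$, so $\mathbb{E}_{\nu^*}[\tfrac12 \tr]$ of each equals $1/2$; I prefer the intrinsic argument above because it is the one suggested by the orthogonality of $e$ to the $e_{i j}$ and does not use the explicit form of the isomorphism.)
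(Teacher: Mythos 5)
Your proof is correct and rests on the same two ingredients as the paper's: the vanishing of the corner meets $e \wedge e_{ij} = 0$ and the parallelogram (modular) law for the trace of projections. The only difference is presentational — you run the argument directly, extracting the two equations $\rho(\tilde{p}) + \rho(\tilde{q}) = 1$ and $\rho(\tilde{p}) = \rho(\tilde{q})$, whereas the paper argues by contradiction after a without-loss-of-generality reduction — so this is essentially the paper's proof.
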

\begin{proof}
	If this is false, then we may choose one of $epe$ or $1 - epe$ and one of $eqe$ or $1 - eqe$ so that the sum of the traces is greater than $1$. Without loss of generality assume that $\tau(e p e) + \tau(e q e) > 1$. Since $e$ commutes with $p$ and $q$, $e p e = p \wedge e$ and $e q e = q \wedge e$.  Then, from the parallelogram law, we have the following contradiction:
	\begin{equation}
		\begin{aligned}
			0 & = \tau( e \wedge (p \wedge q)) = \tau(e p e \wedge e q e) = \tau(epe) + \tau(eqe) - \tau(e p e \vee eqe)   
			\\ & \geq \tau(epe) - \tau(eqe) - 1 > 0 \,.
		\end{aligned}
	\end{equation}
\end{proof}

\subsection{Free probability transforms} 
\label{subsec:free_prob_functions}
In this final preliminary subsection, we review some definitions and properties of free probability transforms. First, we recall that the definition of freeness in $(M, \tau)$: 

\begin{definition}
	Let $(M, \tau)$ be a tracial von Neumann algebra, and let $\{A_i\}_{i \in I}$ be a family of unital subalgebras of $M$. $\{A_i\}_{i \in I}$ are \textbf{freely independent } if for any $a_j \in A_{k(j)}$ with $k(j) \neq k(j + 1)$, $j = 1, \ldots, n = 1$ and $\tau(a_i) = 0$, then 
	\begin{equation}
		\tau(a_1 \ldots, a_n) = 0 \,.
	\end{equation}
	Let $r, (m_k)_{1 \leq k \leq r}$ be positive integers. The sets $\{X_{1, p}, \ldots, X_{m_p, p}  \}_{1 \leq p \leq r}$ of non-commutative random variables are \textbf{free} if the algebras they generate are free.
\end{definition}

If $A_i$ generate $(M, \tau)$ and are freely independent, then $\restr{\tau}{A_i}$ determines $\tau$.

Now, we recall the definition of some free probability transforms of real measures and state some of their basic properties. We refer to \cite{GuionnetBook}, \cite{SpeicherBook}, and \cite{TaoBook} for proofs of these facts.

For a Hermitian $x \in (M, \tau)$, we will abuse notation by using the subscript $x$ to denote the integral transform with respect to the spectral measure $\mu_x$.

First, we define the Stieltjes transform of a real probability measure: 

\begin{definition}
	Let $\mu$ be a probability measure on $\R$. Then, the \textbf{Stieltjes transform} of $\mu$ is the function $G_\mu: \C \setminus \text{supp}(\mu) \to \C$ given by: 
	\begin{equation} 
		G_\mu(z) = \int_{\R}^{} \frac{1}{z - t} \, d \mu(t) \,.
	\end{equation}
\end{definition}

We list some of the well-known facts about $G_\mu$ in the following Proposition: 

\begin{proposition}
	Let $G_\mu$ be the Stieltjes transform of $\mu$. Then, 
	\begin{itemize}
		\item $G_\mu$ is analytic on $\C \setminus \text{supp}(\mu)$. 
		\item Let $\mathbb{H}^{\pm }(\C)$ be the upper/lower half-planes of $\C$. Then, $G_\mu: \mathbb{H}^{\pm} \to \mathbb{H}^{\mp }$. In particular, $G_\mu(z) \in \R$ if and only if $z \in \R \setminus \text{supp}(\mu)$.
		\item $\bar{G_\mu(z)} = G_\mu(\bar{z})$ for $z \in \C \setminus \text{supp}(\mu)$.
		\item $\Abs{G_\mu(z)} \leq \frac{1}{\Abs{\Im(z)}}$.
		\item $G_\mu$ is analytic on $\C \setminus \text{supp}(\mu)$. 
		\item $\mu$ is compactly supported, $G_\mu(z)$ has the following Laurent series expansion for $\Abs{z} > \sup_{\lambda \in \text{supp}(\mu) } \Abs{\lambda}$:
		\begin{equation}
			G_\mu(z) = \sum_{n = 0}^{\infty} \left( \int_{\R}^{} t^n \, d \mu(t) \right) z^{- n - 1} \,.
		\end{equation}
	\end{itemize}
\end{proposition}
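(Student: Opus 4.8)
The plan is to verify each item by direct manipulation of the defining integral $G_\mu(z) = \int_\R \frac{1}{z - t}\, d\mu(t)$, using repeatedly that for $z \in \C \setminus \text{supp}(\mu)$ the number $\dist(z, \text{supp}(\mu))$ is strictly positive, so that $(z - t)^{-1}$ and its $z$-derivatives are bounded uniformly in $t \in \text{supp}(\mu)$ on a neighborhood of any such $z$. For analyticity (the first and redundant fifth items), I would fix $z_0 \notin \text{supp}(\mu)$ and a closed disk $\overline{D}$ centered at $z_0$ of radius less than $\frac{1}{2}\dist(z_0, \text{supp}(\mu))$; on $\overline{D} \times \text{supp}(\mu)$ the function $(z, t) \mapsto (z - t)^{-1}$ is continuous, holomorphic in $z$ for each fixed $t$, and bounded, so either Morera's theorem together with Fubini, or differentiation under the integral sign justified by dominated convergence, shows that $G_\mu$ is holomorphic on $D$; since $z_0$ was arbitrary this gives holomorphy on all of $\C \setminus \text{supp}(\mu)$.

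For the half-plane statement I would separate real and imaginary parts: $\Im G_\mu(z) = \int_\R \Im \frac{1}{z - t}\, d\mu(t) = -\,\Im(z) \int_\R \frac{d\mu(t)}{\Abs{z - t}^2}$, where the remaining integral is strictly positive and finite. Hence $\Im G_\mu(z)$ has sign opposite to $\Im(z)$, which gives $G_\mu(\mathbb{H}^{\pm}) \subseteq \mathbb{H}^{\mp}$, and $\Im G_\mu(z) = 0$ forces $\Im(z) = 0$, so $G_\mu(z) \in \R$ precisely when $z \in \R \setminus \text{supp}(\mu)$. The conjugation identity follows since $\overline{(z - t)^{-1}} = (\overline{z} - t)^{-1}$ for $t \in \R$ and complex conjugation commutes with the integral. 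The bound $\Abs{G_\mu(z)} \leq \Abs{\Im(z)}^{-1}$ is immediate from $\Abs{z - t} \geq \Abs{\Im(z - t)} = \Abs{\Im(z)}$ for real $t$, so that $\Abs{G_\mu(z)} \leq \int_\R \Abs{z - t}^{-1}\, d\mu(t) \leq \Abs{\Im(z)}^{-1}$.

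For the Laurent expansion, assuming $\mu$ is compactly supported, I would set $r = \sup_{\lambda \in \text{supp}(\mu)} \Abs{\lambda}$; then for $\Abs{z} > r$ and $t \in \text{supp}(\mu)$ the geometric series $\frac{1}{z - t} = \sum_{n = 0}^{\infty} t^n z^{-n-1}$ converges, and its partial sums are dominated uniformly in $t$ by $\sum_{n=0}^{\infty} r^n \Abs{z}^{-n-1} < \infty$, so dominated convergence permits term-by-term integration, yielding $G_\mu(z) = \sum_{n=0}^{\infty} \left( \int_\R t^n\, d\mu(t) \right) z^{-n-1}$.

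The only points needing genuine care are the two interchanges of integration with a limiting operation — differentiation in the first item and the infinite sum in the last — and in both cases the justification is exactly the locally uniform domination of $(z - t)^{-1}$ described above. I therefore do not anticipate a real obstacle: the proposition is a standard compilation whose proof amounts to careful bookkeeping with these uniform bounds.
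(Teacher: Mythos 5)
Your proof is correct: each item follows from the defining integral, and the two genuine interchanges (differentiation under the integral for analyticity, term-by-term integration of the geometric series for the Laurent expansion) are justified exactly by the locally uniform domination you describe, while the sign computation $\Im G_\mu(z) = -\Im(z)\int_{\R} \Abs{z-t}^{-2}\,d\mu(t)$ with the integral strictly positive correctly yields both the half-plane mapping and the characterization of when $G_\mu(z)$ is real. The paper itself offers no proof of this proposition --- it is presented as a list of well-known facts with references to standard texts --- and your argument is precisely the standard one those references contain, so there is nothing to compare beyond noting that your bookkeeping is sound.
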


Thus, for compact measures, the Stieltjes transform of a measure $\mu$ contains the same information as the moments of $\mu$. 

We highlight one special fact about the Stieltjes transform. For $z = a + i b$, note that 
\begin{equation}
	\Im \, \frac{1}{z - t} = - \frac{b}{(t - a)^2 + b^2} \,.
\end{equation}
Recall that the Poisson kernel on the upper half-plane is given by: 
\begin{equation}
	P_b(a) = \frac{1}{\pi } \frac{b}{a^2 + b^2} \, , \quad b > 0 \,.
\end{equation}
Combining these two facts shows that 
\begin{equation}
	- \frac{1}{\pi} \Im \, G_\mu(a + i b)  = (\mu * P_b)(a) \,.
\end{equation}
Recall that $P_b$ are approximations to the identity as $b \to 0^+$, so then 
\begin{equation}
	\label{eqn:stieltjes_vague_limit}
	\lim\limits_{b \to 0^+ }- \frac{1}{\pi} \Im \, G_\mu(\cdot + i b) = \mu 
\end{equation}
in the vague topology on $\R$.

By exploiting the conjugate symmetry of $G_\mu$, we can also write this in terms of the discontinuity of $G_\mu$ across $\R$: 
\begin{equation}
	\lim\limits_{b \to 0^+} - \frac{G_\mu(\cdot + i b) - G_\mu(\cdot - i b)}{2 \pi i} = \mu 
\end{equation}
in the vague topology on $\R$.

Additionally, there is an explicit formula for intervals (\cite{SpeicherBook}, Theorem 6): 

\begin{proposition}
	\label{prop:stieltjes_intervals}
	For $a, b \in \R$ and  $a < b$, 
	\begin{equation}
		\lim\limits_{y \to 0^+} \int_{a}^{b} - \frac{1}{\pi} \Im G_\mu(x + i y) \, d x = \mu((a, b)) + \frac{1}{2} \mu(\{a\}) + \frac{1}{2} \mu(\{b\}) \,.
	\end{equation}
\end{proposition}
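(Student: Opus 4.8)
The plan is to reduce the statement to a routine application of Tonelli's theorem followed by the dominated convergence theorem. The starting point is the identity recorded just above, namely $-\frac{1}{\pi}\Im G_\mu(x+iy) = (\mu * P_y)(x) = \int_{\R} P_y(x-t)\,d\mu(t)$, where $P_y(s) = \frac{1}{\pi}\frac{y}{s^2+y^2}$ is the Poisson kernel. Since $P_y \geq 0$, Tonelli's theorem permits interchanging the integral over $x \in (a,b)$ with the integral against $\mu$ with no further hypotheses:
\[
\int_a^b -\frac{1}{\pi}\Im G_\mu(x+iy)\,dx = \int_{\R} \Phi_y(t)\,d\mu(t), \qquad \Phi_y(t) := \int_a^b P_y(x-t)\,dx .
\]
Evaluating the inner integral via the antiderivative $\frac{1}{\pi}\arctan(\cdot/y)$ of $P_y$ gives $\Phi_y(t) = \frac{1}{\pi}\bigl(\arctan\frac{b-t}{y} - \arctan\frac{a-t}{y}\bigr)$.

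Next I would record two facts about $\Phi_y$. First, since $a < b$ forces $\frac{b-t}{y} > \frac{a-t}{y}$ and $\arctan$ is increasing and $(-\pi/2,\pi/2)$-valued, one has $0 \leq \Phi_y(t) \leq 1$ for all $t \in \R$ and all $y > 0$; as $\mu$ is a probability measure, the constant function $1$ is an integrable dominating function. Second, letting $y \to 0^+$ and inspecting the cases $t \notin [a,b]$, $t \in (a,b)$, and $t \in \{a,b\}$ (where exactly one of the two arctangents is identically zero while the other tends to $\pm\pi/2$), one finds the pointwise limit $\Phi_y(t) \to \mathbf{1}_{(a,b)}(t) + \tfrac{1}{2}\mathbf{1}_{\{a\}}(t) + \tfrac{1}{2}\mathbf{1}_{\{b\}}(t)$. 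The dominated convergence theorem then yields
\[
\lim_{y \to 0^+} \int_a^b -\frac{1}{\pi}\Im G_\mu(x+iy)\,dx = \int_{\R} \Bigl(\mathbf{1}_{(a,b)} + \tfrac{1}{2}\mathbf{1}_{\{a\}} + \tfrac{1}{2}\mathbf{1}_{\{b\}}\Bigr)\,d\mu = \mu((a,b)) + \tfrac{1}{2}\mu(\{a\}) + \tfrac{1}{2}\mu(\{b\}) .
\]

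There is no genuine obstacle beyond bookkeeping; the only point deserving a moment's care is the endpoint behaviour $t \in \{a,b\}$, where the inner integral captures exactly half of the unit Poisson mass and thereby produces the coefficients $\tfrac12$ — this is precisely why neither $\mu([a,b])$ nor $\mu((a,b))$ is the correct limit when $\mu$ charges the endpoints. As an alternative route one could bypass Tonelli and instead combine the vague-convergence statement \eqref{eqn:stieltjes_vague_limit} with approximations of $\mathbf{1}_{(a,b)}$ by continuous functions squeezing it from inside and outside, but this is messier and does not deliver the endpoint weights as cleanly, so I would prefer the Tonelli/dominated-convergence argument above.
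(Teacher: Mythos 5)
Your argument is correct and complete: the Tonelli interchange is justified by positivity of the Poisson kernel, the arctangent evaluation of $\Phi_y$ is right, the uniform bound $0\le\Phi_y\le 1$ together with finiteness of $\mu$ licenses dominated convergence, and the case analysis at the endpoints correctly produces the factors $\tfrac12$. The paper itself gives no proof of this proposition --- it is quoted from the reference \cite{SpeicherBook} --- and your proof is precisely the standard Stieltjes-inversion argument found there, so there is nothing to reconcile.
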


When $\mu$ is a compactly supported real measure, then 
\begin{equation}
	\lim\limits_{\Abs{z} \to \infty} z G_\mu(z) = 1 \,.
\end{equation}
Computation using $F_\mu$ shows that $R_\mu$ is analytic in a neighborhood of $0$, as the $1/w$ is exactly the pole of $G_\mu^{\brackets{-1}}$ at $w = 0$.

For non-compactly supported measures $\mu$, then $G_\mu^{\brackets{-1}}$ can still be defined on wedge-shaped domains containing $0$ \cite[see][Theorem 33]{SpeicherBook}, but we will not need this fact.

Similar to the Stieltjes transform, we can define $\psi_\mu$:

\begin{definition}
	Let $\mu$ be a probability measure on $[0, \infty)$. Let $\text{supp}(\mu)^{-1} = \{ 1 / x : x \in \text{supp}(\mu) \}$. Define $\psi_\mu: \C \setminus \text{supp}(\mu)^{-1} \to \C$ by: 
	\begin{equation}
		\psi_\mu(z) = \int_{\R}^{} \frac{t z}{1 - tz} \, d \mu(t) \,.
	\end{equation}
\end{definition}

We list some well-known properties of $\psi_\mu$ in the following Proposition: 

\begin{proposition}
	Let $\psi_\mu$ be as above. Then, 
	\begin{itemize}
		\item $\psi_\mu(0) = 0$.
		\item $\psi_\mu$ is analytic on $\C \setminus \text{supp}(\mu)^{-1}$.
		\item $\bar{\psi_\mu(z)} = \psi_\mu(\bar{z})$ for all $z \in \C \setminus \text{supp}(\mu)^{-1}$.
		\item  Let $\mathbb{H}^{\pm }(\C)$ be the upper/lower half-planes of $\C$. If $\mu(\{0\}) < 1$, then $\psi_\mu: \mathbb{H}^{\pm }(\C) \to \mathbb{H}^{\pm }(\C)$. In particular, $\psi_\mu(z) \in \R$ if and only if $z \in \R \setminus \text{supp}(\mu)^{-1}$.
		\item If $\mu$ is compactly supported, then $G_\mu(z)$ has the following Taylor series expansion for $\Abs{z} < \inf_{\lambda \in \text{supp}(\mu)^{-1}} \Abs{\lambda}$:
		\begin{equation}
			\psi_{\mu}(z) = \sum_{n = 1}^{\infty} \left(  \int_{\R}^{} t^n \, d \mu(t) \right) z^n \,.
		\end{equation}
	\end{itemize}
\end{proposition}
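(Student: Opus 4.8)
The plan is to verify the five assertions in turn by unwinding the definition $\psi_\mu(z) = \int_\R \frac{tz}{1-tz}\,d\mu(t)$; each is a routine consequence of elementary complex analysis together with the positivity of $\mu$, and the single algebraic identity $\frac{tz}{1-tz} = -1 + \frac{1}{1-tz}$ does most of the work.

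The equality $\psi_\mu(0)=0$ is immediate since the integrand vanishes identically at $z=0$. For the conjugation symmetry I would conjugate inside the integral: since $t\in\R$ and $\mu$ is a positive measure, $\overline{\tfrac{tz}{1-tz}} = \tfrac{t\bar z}{1-t\bar z}$, whence $\overline{\psi_\mu(z)} = \psi_\mu(\bar z)$. For analyticity on $\C\setminus\text{supp}(\mu)^{-1}$, fix $z_0$ in this set; using the rewriting $\tfrac{tz}{1-tz} = -1+\tfrac{1}{1-tz}$ one checks that $1-tz$ is bounded away from $0$ on $\text{supp}(\mu)$ uniformly for $z$ in a small disc $D$ about $z_0$, so that both $\tfrac{tz}{1-tz}$ and its $z$-derivative are bounded on $\text{supp}(\mu)\times D$; one may then differentiate under the integral sign (equivalently, invoke Morera together with Fubini), and the same uniform bound shows the defining integral converges in the first place. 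When $\mu$ is compactly supported---the only case needed in this paper---$\text{supp}(\mu)^{-1}$ is compact and this bookkeeping is entirely transparent.

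The half-plane property is the one piece with actual content, and it reduces to the same rewriting $\psi_\mu(z) = -1 + \int_\R \tfrac{1}{1-tz}\,d\mu(t)$. For $z=a+ib$ a direct computation gives $\Im\,\tfrac{1}{1-tz} = \tfrac{tb}{(1-ta)^2+t^2b^2}$, and since $\mu$ is carried by $[0,\infty)$ every $t\ge 0$, so this integrand has the same sign as $b$ and vanishes only at $t=0$. Hence $\Im\,\psi_\mu(z)$ has the same sign as $\Im z$, strictly so unless the integrand is $\mu$-a.e.\ zero, i.e.\ unless $\mu(\{0\})=1$; under the hypothesis $\mu(\{0\})<1$ this gives $\psi_\mu(\mathbb{H}^{\pm})\subseteq\mathbb{H}^{\pm}$. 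The ``in particular'' clause follows: a $z$ in the domain with $z\notin\R$ has $\psi_\mu(z)\notin\R$ by the above, while for $z\in\R\setminus\text{supp}(\mu)^{-1}$ the integrand $\tfrac{tz}{1-tz}$ is real and $\mu$-integrable, so $\psi_\mu(z)\in\R$. Finally, for compactly supported $\mu$ and $\Abs{z} < \inf_{\lambda\in\text{supp}(\mu)^{-1}}\Abs{\lambda} = \big(\sup_{t\in\text{supp}(\mu)}\Abs{t}\big)^{-1}$ we have $\Abs{tz}\le r<1$ for all $t\in\text{supp}(\mu)$ with $r$ independent of $t$, so the geometric series $\tfrac{tz}{1-tz} = \sum_{n\ge 1}t^nz^n$ converges uniformly in $t$; interchanging it with the finite-mass integral yields $\psi_\mu(z) = \sum_{n\ge 1}\big(\int_\R t^n\,d\mu(t)\big)z^n$ (the displayed line in the statement should read $\psi_\mu$ rather than $G_\mu$). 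I expect no genuine obstacle: the only recurring point is the interchange of a limit, sum, or derivative with the integral, and in each instance it is secured by a bound that is uniform over $\text{supp}(\mu)$.
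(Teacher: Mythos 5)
Your argument is correct and is the standard one; note that the paper gives no proof of this proposition at all, stating it as a list of well-known facts and deferring to the cited textbooks, so there is no in-paper argument to compare against. The one cosmetic slip is the remark that $\text{supp}(\mu)^{-1}$ is compact when $\mu$ is compactly supported: if $0 \in \text{supp}(\mu)$ this set is closed but unbounded (e.g.\ $\text{supp}(\mu) = [0,1]$ gives $[1,\infty)$), which is still all your analyticity argument needs, since $\lvert 1 - tz\rvert$ remains uniformly bounded below by splitting $\text{supp}(\mu)$ into the part near $0$ (where $1 - tz$ is near $1$) and the part bounded away from $0$ (where closedness of $\text{supp}(\mu)^{-1}$ applies).
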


We highlight the following equation, valid for $z \in \C \setminus \text{supp}(\mu)$: 
\begin{equation}
	G_\mu(z) = \frac{1}{z} \left( \psi_\mu \left( \frac{1}{z} \right) + 1 \right)  \,.
\end{equation}
To consider the inverse of $\psi_\mu$ at $z = 0$, note that 
\begin{equation}
	\psi'(0) = \int_{\R}^{} t \, d \mu(t) \,.
\end{equation}
As long as $\mu(\{0\}) < 1$, then $\psi_\mu$ is invertible in a neighborhood of $0$. In this situation, define the following functions: 

\begin{definition}
	Let $\mu$ be a compactly supported probability measure on $[0, \infty)$ such that $\mu (\{0\}) < 1$. Then, $\psi_\mu$ is invertible in a neighborhood of $0$ in $\C$. Define
	\begin{equation}
		\begin{aligned}
			\chi_\mu(w) & = \psi_\mu^{\brackets{-1}}(w) \\
			S_\mu(w) &= \chi_\mu(w) \frac{w + 1}{w} \,.
		\end{aligned}
	\end{equation}
	$S_\mu$ is called the $\bm{S}$\textbf{-transform} of $\mu$.
\end{definition}

In particular, since $\chi_\mu(0) = 0$ then $S_\mu$ is well-defined in a neighborhood of $0$. 

Finally, we recall the addition (resp. multiplication) laws for free additive (resp. free multiplicative) convolution:

\begin{theorem}
	Let $x, y \in (M, \tau)$ be Hermitian and freely independent. Let $x \boxplus y$ be the spectral measure of $x + y$. Then, where the functions are defined,
	\begin{equation}
		R_{x \boxplus y}(z) = R_{\mu_x}(z) + R_{\mu_y}(z) \,.
	\end{equation}
	If $x, y$ are positive, let $x \boxtimes y$ be the spectral measure of $x^{1/2} y x^{1/2}$. Then, where the functions are defined,
	\begin{equation}
		S_{x \boxtimes y}(z) = S_{\mu_x}(z) S_{\mu_y}(z) \,.
	\end{equation}
\end{theorem}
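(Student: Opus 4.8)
The plan is to treat both identities in parallel through the combinatorial theory of free cumulants, which turns freeness into a clean algebraic statement; the analytic transforms then only enter through generating-function manipulations. First I would recall the \emph{moment--cumulant formula}: for $a_1,\dots,a_n \in (M,\tau)$ one defines the free cumulants $\kappa_n(a_1,\dots,a_n)$ implicitly by
\begin{equation}
	\tau(a_1 \cdots a_n) = \sum_{\pi \in NC(n)} \ \prod_{V \in \pi} \kappa_{|V|}\big( (a_i)_{i \in V} \big),
\end{equation}
where $NC(n)$ is the lattice of non-crossing partitions of $\{1,\dots,n\}$; since these are locally finite lattices, Möbius inversion shows the $\kappa_n$ are well-defined multilinear functionals. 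Writing $\kappa_n^{(\mu)} := \kappa_n(x,\dots,x)$ when $x$ has spectral measure $\mu$, these numbers are determined by and determine the moments of $\mu$, and a generating-function computation from the functional equation $M_\mu(z) = C_\mu\big(z M_\mu(z)\big)$ — where $M_\mu(z) = 1 + \psi_\mu(z)$ and $C_\mu(z) = 1 + \sum_{n\ge 1}\kappa_n^{(\mu)} z^n$ — identifies $R_\mu(w) = \sum_{n \ge 1}\kappa_n^{(\mu)} w^{n-1}$, equivalently $G_\mu\big(R_\mu(w) + 1/w\big) = w$ near $w = 0$. For compactly supported $\mu$ all series converge on a neighbourhood of $0$, so the formal and analytic statements agree on the domains where the functions in the theorem live.

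The engine is Speicher's theorem: unital subalgebras $\{A_i\}$ are freely independent if and only if every \emph{mixed} free cumulant vanishes, i.e. $\kappa_n(a_1,\dots,a_n) = 0$ whenever the $a_j$ do not all lie in a single $A_i$. Granting this, the additive law is immediate: by multilinearity,
\begin{equation}
	\kappa_n(x+y,\dots,x+y) = \sum_{a_1,\dots,a_n \in \{x,y\}} \kappa_n(a_1,\dots,a_n),
\end{equation}
and since $x,y$ are free every summand mixing $x$'s and $y$'s dies, leaving $\kappa_n^{(x \boxplus y)} = \kappa_n^{(\mu_x)} + \kappa_n^{(\mu_y)}$; summing the cumulant generating series gives $R_{x \boxplus y}(z) = R_{\mu_x}(z) + R_{\mu_y}(z)$.

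For the multiplicative law I would first use traciality to note $\tau\big((x^{1/2} y x^{1/2})^n\big) = \tau\big((xy)^n\big)$, so the moments of $\mu_{x \boxtimes y}$ are the numbers $\tau\big((xy)^n\big)$. Then I would expand $\tau(xyxy\cdots xy)$ by the moment--cumulant formula and again annihilate all mixed cumulants; what survives is a sum over pairs of non-crossing partitions of the $x$-slots and the $y$-slots that are forced to be complementary in the sense of the Kreweras complement, which collapses to a functional relation tying $\psi_{\mu_{x \boxtimes y}}$ to $\psi_{\mu_x}$ and $\psi_{\mu_y}$. Rewriting this relation through the inverse functions $\chi_\mu = \psi_\mu^{\brackets{-1}}$ and the normalization $S_\mu(w) = \chi_\mu(w)\frac{w+1}{w}$ turns it into $S_{x \boxtimes y}(z) = S_{\mu_x}(z) S_{\mu_y}(z)$; the hypotheses $\mu_x(\{0\}), \mu_y(\{0\}) < 1$ are exactly what makes $\chi_\mu$, hence $S_\mu$, well-defined near $0$.

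I expect the main obstacle to be twofold. The first is establishing Speicher's vanishing-of-mixed-cumulants characterization of freeness: this requires working in the incidence algebra of $NC(n)$ and a fairly delicate induction over alternating products of centered elements. The second is the multiplicative case: correctly bookkeeping the Kreweras-complement pairing so that the double sum over non-crossing partitions genuinely factors through the inverse functions $\chi_{\mu_x}, \chi_{\mu_y}$ — this is the step where it is tempting to instead invoke Voiculescu's original full-Fock-space realization of free convolution, but staying with cumulants keeps both laws uniform. The remaining ingredients (Möbius inversion on $NC(n)$, the generating-function identities, and passing from formal power series to germs of analytic functions for compactly supported measures) are routine.
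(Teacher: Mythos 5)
The paper does not actually prove this theorem: it appears in the preliminaries as a quoted standard fact, with proofs deferred to the cited references, so there is no in-paper argument to compare against and your proposal must be judged on its own terms. What you outline is the standard combinatorial proof via free cumulants in the style of Speicher and Nica--Speicher, and the architecture is sound: the additive law is indeed an immediate consequence of multilinearity once the vanishing-of-mixed-cumulants characterization of freeness is available, and the multiplicative law does come from expanding $\tau\big((xy)^n\big)$ (your reduction $\tau\big((x^{1/2}yx^{1/2})^n\big)=\tau\big((xy)^n\big)$ by traciality is correct) and reorganizing the surviving terms by Kreweras complementation. That said, as written this is a roadmap rather than a proof: the two steps you yourself flag as the main obstacles --- Speicher's equivalence of freeness with the vanishing of mixed cumulants, and the passage from the sum over pairs $(\pi, K(\pi))$ of complementary non-crossing partitions to the identity $S_{x\boxtimes y}=S_{\mu_x}S_{\mu_y}$ --- constitute essentially the entire mathematical content of the theorem, and neither is carried out. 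The multiplicative half in particular does not ``collapse'' by inspection; one needs either the machinery of multiplicative functions on the lattice $NC(n)$ and boxed convolution, or a careful generating-function manipulation against the functional equation defining $\chi_\mu$. If your intent is a citation-level justification, it matches what the paper itself does; if your intent is a self-contained proof, those two steps must be supplied in full.
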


\section{Notation and Outline of Computation}
\label{sec:notation}
Recall that we consider operators of the form $X = p + i q$, where $p$ and $q$ are Hermitian, freely independent, and have $2$ atoms in their spectra. 

We will use the following notation for the atoms of $p$ and $q$ and their weights: 
\begin{equation}
	\begin{aligned}
		\mu_p & = a \delta_\alpha + (1 - a) \delta_{\alpha'} \\
		\mu_q &= b \delta_\beta + (1 - b) \delta_{\beta'} \,,
	\end{aligned}
\end{equation}
where $a, b \in (0, 1)$, $\alpha, \alpha', \beta, \beta' \in \R$, $\alpha \neq \alpha'$, and $\beta \neq \beta'$.

We fix $p'$ (resp. $q'$) to be the following spectral projection of $p$ (resp. $q$):

\begin{definition}
	Let $p, q \in (M, \tau)$ be Hermitian with laws:
	\begin{equation}
		\begin{aligned}
			\mu_p & = a \delta_\alpha + (1 - a) \delta_{\alpha'} \\
			\mu_q &= b \delta_\beta + (1 - b) \delta_{\beta'} \,.
		\end{aligned}
	\end{equation}
	Let $p', q' \in (M, \tau)$ be the following projections: 
	\begin{equation}
		\begin{aligned}
			p' & = \chi_{\{\alpha'\}}(p) \\
			q' &= \chi_{\{\beta'\}}(q) \,.		
		\end{aligned}
	\end{equation}
\end{definition}

As a consequence, 
\begin{equation}
	\begin{aligned}
		1 - p' = \chi_{\{\alpha\}}(p) \\
		1 - q' = \chi_{\{\beta\}}(q) \,.
	\end{aligned}
\end{equation}
Hence, 
\begin{equation}
	\begin{aligned}
		p & = \alpha (1 - p') + \alpha' p' = (\alpha' - \alpha) p' + \alpha \\
		q &= \beta(1 - q') + \beta' q' = (\beta' - \beta) q' + \beta \,
	\end{aligned}
\end{equation}
or equivalently 
\begin{equation}
	\label{eqn:p'_q'}
	\begin{aligned}
		p' & = \frac{p - \alpha}{\alpha' - \alpha}  \\
		q' &= \frac{q - \beta}{\beta' - \beta} \,.
	\end{aligned}
\end{equation}

Recall the Brown measure of $X$ is defined as: 
\begin{equation}
	\mu = \frac{1}{2 \pi} \nabla^2 \log \Delta(z - X)   = \frac{1}{2 \pi} \nabla^2 \frac{1}{2} \int_{0}^{\infty} \log(x) \, d \nu_z(x) \,,
\end{equation}
where $\nu_z$ is the spectral measure of $H_z(X) = (z - X)^*(z - X)$.

To compute the Brown measure, we need to complete the following steps: 

\begin{enumerate}
	\item Compute $\nu_z$.
	\item Compute $\log \Delta(z - X) = \frac{1}{2} \int_{0}^{\infty} \log(x) \, d \nu_z(x)$.
	\item Compute $\mu = \frac{1}{2 \pi} \nabla^2 \log \Delta(z - X)$.
\end{enumerate}

In Section \ref{sec:brown_measure_compute}, we will compute the Brown measure of $X$ up to some parameters that come from the von Neumann algebra generated by $2$ projections in Subsection \ref{subsec:two_projections}. In Section \ref{sec:atoms_weights}, we use freeness of $p$ and $q$ to compute these parameters and also deduce that $X$ is non-normal. In Section \ref{sec:full_brown_measure}, we combine these results to state the Brown measure of $X$ and deduce some properties. In Section \ref{sec:further_work}, we discuss some further work that has been completed on related families of operators. 

\section{Brown Measure up to Weights and Measures}
\label{sec:brown_measure_compute}
In this section, we will describe the Brown measure of $X = p + i q$ as a convex combination of 4 atoms and another probability measure, $\mu'$. Applying the model of the von Neumann algebra generated by $2$ projections with the projections $p'$ and $q'$ defined in the previous section, observe that:
\begin{equation}
	\label{eqn:e_{ij}_p_q}
	\begin{aligned}
		e_{0 0} & = \chi_{\{\alpha\}}(p) \wedge \chi_{\{\beta\}}(q) \\
		e_{0 1} &= \chi_{\{\alpha\}}(p) \wedge \chi_{\{\beta'\}}(q) \\
		e_{1 0} &= \chi_{\{\alpha'\}}(p) \wedge \chi_{\{\beta\}}(q) \\
		e_{1 1} &= \chi_{\{\alpha'\}}(p) \wedge \chi_{\{\beta'\}}(q) \\
		e &= 1 - (e_{0 0} + e_{0 1} + e_{1 0} + e_{1 1}  ) \,.
	\end{aligned}
\end{equation}
The measure $\mu'$ depends on the measure $\nu$ in the isomorphism from the von Neumann algebra generated by $2$ projections: $(e M e, \restr{\tau}{eM e}) \isomorphic \left( M_2 \left( L^{\infty} ((0, \pi / 2), \nu) \right) , \mathbb{E}_{\nu}[ \frac{1}{n} \tr ]\right) $. The weights in the convex combination are the weights $\tau(e_{i j}), \tau(e)$ from Subsection \ref{subsec:two_projections}. We will determine the Brown measure up to determining the weights $\tau(e_{i j}), \tau(e)$ and the measure $\nu$. These parameters depend on the joint law of $p$ and $q$. In particular, the results in this section are valid for arbitrary projections $p, q \in (M, \tau)$. In Section \ref{sec:atoms_weights}, we will compute these parameters when $p$ and $q$ are freely independent. 

To compute the Brown measure of $X$, recall we need to first compute $\nu_z$, the spectral measure of $H_z(X) = (z - X)^* (z - X)$.

The result of the computation is the following: 

\begin{proposition}
	\label{prop:nu_z}
	If $e = 0$,
	\begin{equation}
		\nu_z = \tau(e_{00}) \delta_{ \Abs{z - (\alpha + i \beta)}^2  } + \tau(e_{0 1}) \delta_{\Abs{z - (\alpha + i \beta')}^2} + \tau(e_{1 0}) \delta_{\Abs{z - (\alpha'+ i \beta)}^2} + \tau(e_{11}) \delta_{\Abs{z - (\alpha'+ i \beta')}^2} \,.
	\end{equation}
	If $e \neq 0$, there exists continuous functions $\sigma_{z, 1}, \sigma_{z, 2}: (0, \pi / 2) \to [0, \infty)$ such that $\sigma_{z, 1}(\theta), \sigma_{z, 2}(\theta)$ are the singular values of the element of $\left( M_2 \left( L^{\infty} ((0, \pi / 2), \nu) \right) , \mathbb{E}_{\nu}[ \frac{1}{n} \tr ]       \right)$ corresponding to $e (z - (p + i q))e \in (e M e, \restr{\tau}{eM e})$.
	
	Then,
	\begin{equation}
		\begin{aligned}
			\nu_z & = \tau(e_{00}) \delta_{ \Abs{z - (\alpha + i \beta)}^2  } + \tau(e_{0 1}) \delta_{\Abs{z - (\alpha + i \beta')}^2} + \tau(e_{1 0}) \delta_{\Abs{z - (\alpha'+ i \beta)}^2} + \\ 
			&  \qquad \qquad \tau(e_{11}) \delta_{\Abs{z - (\alpha'+ i \beta')}^2} + \tau(e) \frac{ (\sigma_{z, 1}^2)_* (\nu) + (\sigma_{z, 2}^2)_*(\nu)  }{2} \,.
		\end{aligned}
	\end{equation}
\end{proposition}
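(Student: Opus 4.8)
The plan is to exploit the central decomposition $1 = e_{00} + e_{01} + e_{10} + e_{11} + e$ from Subsection \ref{subsec:two_projections}. Since these five projections are central, mutually orthogonal, and commute with $p$ and $q$, the tracial algebra $(M,\tau)$ splits as a direct sum of the corners $e_{ij} M e_{ij}$ and $eMe$, the positive element $H_z(X) = (z-X)^*(z-X)$ splits accordingly, and for every continuous $f$ on $[0,\infty)$ one has $\tau(f(H_z(X))) = \sum_{ij}\tau(e_{ij})\,\tau_{ij}(f(e_{ij}H_z(X)e_{ij})) + \tau(e)\,\restr{\tau}{eMe}(f(eH_z(X)e))$, where $\tau_{ij}$ and $\restr{\tau}{eMe}$ are the normalized traces on the respective corners (and the weights sum to $1$). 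Hence $\nu_z$ is the $\tau$-weighted sum of the spectral measures of $H_z(X)$ on the five corners, and it suffices to identify each of these.

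First I would dispose of the four atomic corners. Recalling that $e_{00} = \chi_{\{\alpha\}}(p)\wedge\chi_{\{\beta\}}(q) \le (1-p')\wedge(1-q')$, we get $p\,e_{00} = \alpha e_{00}$ and $q\,e_{00} = \beta e_{00}$, so $X e_{00} = (\alpha + i\beta) e_{00}$ and $H_z(X) e_{00} = \Abs{z - (\alpha + i\beta)}^2 e_{00}$; thus the spectral measure of the $e_{00}$-corner is the point mass $\delta_{\Abs{z - (\alpha + i\beta)}^2}$. The same argument for $e_{01} = \chi_{\{\alpha\}}(p)\wedge\chi_{\{\beta'\}}(q)$, $e_{10} = \chi_{\{\alpha'\}}(p)\wedge\chi_{\{\beta\}}(q)$, $e_{11} = \chi_{\{\alpha'\}}(p)\wedge\chi_{\{\beta'\}}(q)$ produces the point masses at $\Abs{z-(\alpha+i\beta')}^2$, $\Abs{z-(\alpha'+i\beta)}^2$, $\Abs{z-(\alpha'+i\beta')}^2$. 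If $e = 0$ these four corners exhaust $M$ and the weights $\tau(e_{ij})$ sum to $1$, which is the first case; so from now on assume $e \neq 0$.

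Next I would push the computation into the matrix model. Using $p = (\alpha'-\alpha)p' + \alpha$ and $q = (\beta'-\beta)q' + \beta$ together with the correspondence recalled in Subsection \ref{subsec:two_projections} (with $t = \cos^2\theta$, so $(t-t^2)^{1/2} = \cos\theta\sin\theta$), the element $e(z - X)e = (z-\alpha-i\beta)e - (\alpha'-\alpha)ep'e - i(\beta'-\beta)eq'e$ of $eMe$ corresponds under the isomorphism $(eMe, \restr{\tau}{eMe}) \isomorphic \left( M_2(L^\infty((0,\pi/2),\nu)), \mathbb{E}_\nu[\tfrac12\tr] \right)$ to the explicit matrix-valued function
\begin{equation*}
	A_z(\theta) = (z-\alpha-i\beta)\,\id - (\alpha'-\alpha)\begin{pmatrix} \cos^2\theta & \cos\theta\sin\theta \\ \cos\theta\sin\theta & \sin^2\theta \end{pmatrix} - i(\beta'-\beta)\begin{pmatrix} 1 & 0 \\ 0 & 0 \end{pmatrix},
\end{equation*}
whose entries are (real-analytic, hence) continuous in $\theta\in(0,\pi/2)$. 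Consequently $A_z(\theta)^*A_z(\theta)$ is a continuous positive $2\times2$ matrix-valued function, and I would define $\sigma_{z,1}(\theta)\ge\sigma_{z,2}(\theta)\ge 0$ to be the square roots of its two eigenvalues; these are by definition the singular values of $A_z(\theta)$, and since the (ordered) eigenvalues of a continuously varying Hermitian matrix depend continuously on it, $\sigma_{z,1},\sigma_{z,2}\colon(0,\pi/2)\to[0,\infty)$ are continuous. As the isomorphism is a trace-preserving $*$-isomorphism it intertwines the functional calculus, so because $eH_z(X)e = (e(z-X)e)^*(e(z-X)e)$ corresponds to $A_z(\theta)^*A_z(\theta)$, for every continuous $f$ on $[0,\infty)$,
\begin{equation*}
	\restr{\tau}{eMe}\big(f(eH_z(X)e)\big) = \mathbb{E}_\nu\big[\tfrac12\tr f(A_z(\theta)^*A_z(\theta))\big] = \tfrac12\int f\,d(\sigma_{z,1}^2)_*(\nu) + \tfrac12\int f\,d(\sigma_{z,2}^2)_*(\nu),
\end{equation*}
so the spectral measure of the $eMe$-corner of $H_z(X)$ is $\tfrac12\big((\sigma_{z,1}^2)_*(\nu)+(\sigma_{z,2}^2)_*(\nu)\big)$. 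Weighting by $\tau(e)$ and adding the four atoms (weighted by $\tau(e_{ij})$) gives the second case.

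The individual steps are elementary; the points that need care are the bookkeeping of the central decomposition — in particular that the normalization $\restr{\tau}{eMe}(eme) = \tau(eme)/\tau(e)$ is precisely what produces the prefactor $\tau(e)$, and that the isomorphism of Subsection \ref{subsec:two_projections} genuinely transports $f(eH_z(X)e)$ to $f(A_z^*A_z)$ so that the spectral measure may legitimately be computed in the matrix model — together with the continuity of $\sigma_{z,1},\sigma_{z,2}$, which is exactly why one works with the variable $\theta$ rather than $t$, the off-diagonal entry $(t-t^2)^{1/2}$ being smooth in $\theta$ but only Hölder-$\tfrac12$ in $t$ near $0$ and $1$. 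I expect the $*$-isomorphism/functional-calculus bookkeeping to be the main (though routine) obstacle; everything else is direct calculation.
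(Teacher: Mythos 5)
Your proposal is correct and follows essentially the same route as the paper: decompose $H_z(X)$ along the central projections $e_{00},e_{01},e_{10},e_{11},e$, identify the four scalar corners, and transport the $eMe$-corner to the $2\times 2$ matrix model where the normalized trace against $\nu$ yields the averaged pushforwards of the squared singular values. The only substantive deviations are cosmetic: you test against arbitrary continuous $f$ where the paper uses moments $x^n$, and you obtain continuity of $\sigma_{z,1},\sigma_{z,2}$ from the standard continuity of ordered eigenvalues of a continuous Hermitian family rather than the paper's explicit choice of continuous branches of the square root of the discriminant of the characteristic polynomial --- both are valid.
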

\begin{proof}
	Let $H_z = H_z(X) = (z - X)^*(z - X)$. Since the $e_{i j}, e$ are central, mutually orthogonal projections that sum to $1$, then 
	\begin{equation}
		\begin{aligned}
			(H_z)^n &= (e_{0 0} + e_{0 1} + e_{1 0} + e_{1 1} + e) (H_z)^n (e_{0 0} + e_{0 1} + e_{1 0} + e_{1 1} + e) \\
			&= e_{00} (H_z)^n e_{00} + e_{0 1} (H_z)^n e_{0 1} + e_{1 0} (H_z)^n e_{1 0} + e_{1 1} (H_z)^n e_{1 1} + e (H_z)^n e \\
			&= (e_{00} H_z e_{00} )^n+ (e_{0 1} H_z e_{0 1})^n + (e_{1 0} H_z e_{1 0} )^n + (e_{1 1} H_z e_{1 1})^n + (e H_z e)^n \,.
		\end{aligned}
	\end{equation}
	Taking traces of both sides, 
	\begin{equation}
		\begin{aligned}
			& \tau((H_z)^n) \\ 
			& = \tau ((e_{00} H_z e_{00} )^n)+  \tau ((e_{0 1} H_z e_{0 1})^n) + \\
			& \qquad \qquad \tau((e_{1 0} H_z e_{1 0} )^n) + \tau ((e_{1 1} H_z e_{1 1})^n) + \tau((e H_z e)^n) \,.
		\end{aligned}
	\end{equation}
	From the definitions of the $e_{i j}$, 
	\begin{equation}
		\begin{aligned}
			(e_{00} H_z e_{00})^n & =  (\Abs{z - (\alpha + i \beta)}^2)^n  e_{00} \\
			(e_{01} H_z e_{01})^n & =  (\Abs{z - (\alpha + i \beta')}^2)^n  e_{01} \\
			(e_{10} H_z e_{10})^n & =  (\Abs{z - (\alpha'+ i \beta)}^2)^n  e_{10} \\
			(e_{11} H_z e_{11})^n & =  (\Abs{z - (\alpha'+ i \beta')}^2)^n  e_{11} \,.
		\end{aligned}
	\end{equation}
	Combining the previous two equations,
	\begin{equation}
		\begin{aligned}
			\tau((H_z)^n) & = \tau (e_{00}) (\Abs{z - (\alpha + i \beta)}^2)^n   \\
			& \quad +  \tau (e_{01}) (\Abs{z - (\alpha + i \beta')}^2)^n   \\
			& \quad + \tau(e_{10} ) (\Abs{z - (\alpha'+ i \beta)}^2)^n \\
			& \quad + \tau (e_{11}) (\Abs{z - (\alpha'+ i \beta')}^2)^n   \\
			& \quad +  \tau(e) \restr{\tau}{e M e}((e H_z e)^n) \,.
		\end{aligned}
	\end{equation}
	The right-hand side is the $n$-th moment of the following convex combination of probability measures:
	\begin{equation}
		\label{eqn:prop:nu_z}
		\begin{aligned}
			& \tau(e_{00}) \delta_{ \Abs{z - (\alpha + i \beta)}^2  } + \tau(e_{0 1}) \delta_{\Abs{z - (\alpha + i \beta')}^2} + \\ & \qquad + \tau(e_{1 0}) \delta_{\Abs{z - (\alpha'+ i \beta)}^2} + \tau(e_{11}) \delta_{\Abs{z - (\alpha'+ i \beta')}^2} + \tau(e) \mu_{e H_z e} \,,
		\end{aligned}
	\end{equation}
	where $\mu_{e H_z e}$ is the spectral measure of $e H_z e$ in $\left( e M e, \restr{\tau}{e M e} \right) $.
	
	As moments determine compact measures on $\R$, then this is $\nu_z$.
	
	If $e = 0$, then $\nu_z$ is the desired convex combination of atoms. 
	
	If $e \neq 0$, consider $\mu_{e H_z e}$ using the isomorphism described in Section \ref{sec:two_projections}, i.e. we proceed by assuming that $(e M e, \restr{\tau}{eM e}) = \left( M_2 \left( L^{\infty} ((0, \pi / 2), \nu) \right) , \mathbb{E}_{\nu}[ \frac{1}{n} \tr ]       \right)$. 
	
	For any $m \in M$, let $\tilde{m} = e m e \in e M e$. Then, $\tilde{H_z} = (z - (\tilde{p} + i \tilde{q}))^*(z - (\tilde{p} + i \tilde{q}))$. 
	
	Now, we claim that it is possible to choose continuous functions $\sigma_{z, 1}(\theta), \sigma_{z, 2}(\theta)$ that are the singular values of $(z - (\tilde{p} + i \tilde{q})) = e (z - (p + i q)) e$ for $\theta \in (0, \pi / 2)$. First, note that from the expressions for $\tilde{p'}, \tilde{q'}$ in Subsection \ref{subsec:two_projections} that $\tilde{H_z}$ is continuous in $\theta$. Hence, the characteristic polynomial of $\tilde{H_z}$ is a monic polynomial with coefficients that are continuous in $\theta$. From the quadratic formula, the eigenvalues of $\tilde{H_z}$ have expressions in terms of the coefficients of the characteristic polynomial of $\tilde{H_z}$. As $\tilde{H_z}$ are positive operators, then the eigenvalues are real, i.e. the discriminant of the characteristic polynomial is non-negative. Hence, the two branches of the square root of the discriminant are continuous in $\theta$. Thus, we may choose continuous expressions for the two eigenvalues of $\tilde{H_z}$. Taking square roots of these two non-negative continuous functions produces $\sigma_{z, 1}(\theta), \sigma_{z, 2}(\theta)$.
	
	Thus, 
	\begin{equation}
		\begin{aligned}
			\restr{\tau}{e M e} ((\tilde{H_z})^n) & = \int_{0}^{\pi / 2} \frac{1}{2} \tr ( (\tilde{H_z})^n  )  \, d \nu(\theta) \\
			& = \int_{0}^{\pi / 2} \frac{(\sigma_{z, 1}^2(\theta))^n + (\sigma_{z, 2}^2 (\theta))^n }{2}   \, d \nu(\theta)		\,.
		\end{aligned}
	\end{equation}
	The right-hand side is the $n$-th moment of the following measure: 
	\begin{equation}
		\frac{ (\sigma_{z, 1}^2)_* (\nu) + (\sigma_{z, 2}^2)_*(\nu)  }{2} \,.
	\end{equation}
	Since moments determine compact measures on $\R$, then
	\begin{equation}
		\mu_{e H_z e} = \frac{ (\sigma_{z, 1}^2)_* (\nu) + (\sigma_{z, 2}^2)_*(\nu)  }{2} \,.
	\end{equation}
	By combining this with (\ref{eqn:prop:nu_z}), we get the desired result: 
	\begin{equation}
		\begin{aligned}
			\nu_z & = \tau(e_{00}) \delta_{ \Abs{z - (\alpha + i \beta)}^2  } + \tau(e_{0 1}) \delta_{\Abs{z - (\alpha + i \beta')}^2} + \tau(e_{1 0}) \delta_{\Abs{z - (\alpha'+ i \beta)}^2} + \\
			& \qquad \qquad \tau(e_{11}) \delta_{\Abs{z - (\alpha'+ i \beta')}^2}  + \tau(e) \frac{ (\sigma_{z, 1}^2)_* (\nu) + (\sigma_{z, 2}^2)_*(\nu)  }{2} \, .
		\end{aligned}
	\end{equation}
\end{proof}

As an intermediate step in computing the Brown measure of $X$, we compute $\log \Delta(z - X)$:
\begin{proposition}
	\label{prop:log_delta}
	
	If  $e = 0$,
	\begin{equation}
		\begin{aligned}
			\log \Delta(z - X) 
			&= \frac{1}{2} \int_{0}^{\infty} \log(x) \, d \nu_z(x) \\
			& = \tau(e_{00}) \log \Abs{z - (\alpha + i \beta)} \\
			& \quad + \tau(e_{0 1}) \log \Abs{z - (\alpha + i \beta')} \\
			& \quad + \tau(e_{1 0}) \log \Abs{z - (\alpha'+ i \beta)} \\
			& \quad + \tau(e_{11}) \log \Abs{z - (\alpha'+ i \beta')} .
		\end{aligned}
	\end{equation}
	If $e \neq 0$, there exists continuous functions $\lambda_{1}, \lambda_{2}: (0, \pi / 2) \to \C$ such that $\lambda_{1}(\theta), \lambda_{2}(\theta)$ are the eigenvalues of the element of $\left( M_2 \left( L^{\infty} ((0, \pi / 2), \nu) \right) , \mathbb{E}_{\nu}[ \frac{1}{n} \tr ]       \right)$ corresponding to $e (p + i q) e \in   (e M e, \restr{\tau}{eM e})$.
	
	Let $\mathscr{A} = \alpha' - \alpha$ and $\mathscr{B} = \beta' - \beta$ and $\sqrt{z}$ denote the principal branch of the square root defined on $\C \setminus (- \infty, 0)$. Then, 
	\begin{equation}
		\lambda_{1}(\theta) = 
		\begin{dcases}
			\frac{\alpha + \alpha'}{2} + i \frac{\beta + \beta'}{2} - \frac{1}{2} \sqrt{\mathscr{A}^2 - \mathscr{B}^2 + 2 i \mathscr{A} \mathscr{B} \cos (2 \theta)} & \text{ when } \Abs{\mathscr{A}} \geq \Abs{\mathscr{B}} \\
			\frac{\alpha + \alpha'}{2} + i \frac{\beta + \beta'}{2} - \frac{i}{2} \sqrt{\mathscr{B}^2 - \mathscr{A}^2 - 2 i \mathscr{A} \mathscr{B} \cos (2 \theta)} & \text{ when } \Abs{\mathscr{A}} < \Abs{\mathscr{B}}
		\end{dcases}
	\end{equation}
	\begin{equation}
		\lambda_{2}(\theta) = 
		\begin{dcases}
			\frac{\alpha + \alpha'}{2} + i \frac{\beta + \beta'}{2} + \frac{1}{2} \sqrt{\mathscr{A}^2 - \mathscr{B}^2 + 2 i \mathscr{A} \mathscr{B} \cos (2 \theta)} & \text{ when } \Abs{\mathscr{A}} \geq \Abs{\mathscr{B}} \\
			\frac{\alpha + \alpha'}{2} + i \frac{\beta + \beta'}{2} + \frac{i}{2} \sqrt{\mathscr{B}^2 - \mathscr{A}^2 - 2 i \mathscr{A} \mathscr{B} \cos (2 \theta)} & \text{ when } \Abs{\mathscr{A}} < \Abs{\mathscr{B}} .
		\end{dcases}
	\end{equation}	
	Then,
	\begin{equation}
		\begin{aligned}
			\log \Delta(z - X) 
			&= \frac{1}{2} \int_{0}^{\infty} \log(x) \, d \nu_z(x) \\
			& = \tau(e_{00}) \log \Abs{z - (\alpha + i \beta)} \\
			& \quad + \tau(e_{0 1}) \log \Abs{z - (\alpha + i \beta')} \\
			& \quad + \tau(e_{1 0}) \log \Abs{z - (\alpha'+ i \beta)} \\
			& \quad + \tau(e_{11}) \log \Abs{z - (\alpha'+ i \beta')} \\ 
			& \quad + \tau(e) \int_{0}^{\pi / 2} \frac{\log \Abs{z - \lambda_{1}(\theta)} + \log \Abs{z - \lambda_{2}(\theta)}}{2} \, d \nu(\theta) \,.
		\end{aligned}
	\end{equation}
\end{proposition}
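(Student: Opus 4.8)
The plan is to reduce everything to the spectral measure $\nu_z$ already produced in Proposition \ref{prop:nu_z} and then to identify the eigenvalues of the relevant $2 \times 2$ matrix. When $e = 0$, Proposition \ref{prop:nu_z} writes $\nu_z$ as a convex combination of the four point masses $\delta_{\Abs{z - (\alpha + i\beta)}^2}, \delta_{\Abs{z - (\alpha + i\beta')}^2}, \delta_{\Abs{z - (\alpha' + i\beta)}^2}, \delta_{\Abs{z - (\alpha' + i\beta')}^2}$ with weights $\tau(e_{ij})$, so substituting into $\log \Delta(z - X) = \tfrac12 \int_0^\infty \log x \, d\nu_z(x)$ and using $\tfrac12 \log \Abs{w}^2 = \log \Abs{w}$ gives the first formula immediately. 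Thus the content is in the case $e \neq 0$.

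For $e \neq 0$ I would work inside the isomorphism of Subsection \ref{subsec:two_projections} applied to the projections $p'$ and $q'$, under which
\[ e \leftrightarrow \begin{pmatrix} 1 & 0 \\ 0 & 1 \end{pmatrix}, \qquad \tilde{p'} \leftrightarrow \begin{pmatrix} \cos^2\theta & \cos\theta\sin\theta \\ \cos\theta\sin\theta & \sin^2\theta \end{pmatrix}, \qquad \tilde{q'} \leftrightarrow \begin{pmatrix} 1 & 0 \\ 0 & 0 \end{pmatrix}. \]
Since $p = \mathscr{A} p' + \alpha$ and $q = \mathscr{B} q' + \beta$, the element $e(p + iq)e$ corresponds to the matrix-valued function
\[ M(\theta) = \begin{pmatrix} \mathscr{A}\cos^2\theta + \alpha + i(\beta + \mathscr{B}) & \mathscr{A}\cos\theta\sin\theta \\ \mathscr{A}\cos\theta\sin\theta & \mathscr{A}\sin^2\theta + \alpha + i\beta \end{pmatrix}. \]
A short computation gives $\tr M(\theta) = (\alpha + \alpha') + i(\beta + \beta')$ and $(\tr M(\theta))^2 - 4\det M(\theta) = \mathscr{A}^2 - \mathscr{B}^2 + 2i\mathscr{A}\mathscr{B}\cos(2\theta)$ (the $\theta$-dependent part of $\det M(\theta)$ collapses to $i\mathscr{A}\mathscr{B}\sin^2\theta$, and $1 - 2\sin^2\theta = \cos(2\theta)$). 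Hence the eigenvalues of $M(\theta)$ are $\tfrac{\alpha + \alpha'}{2} + i\tfrac{\beta + \beta'}{2}$ plus or minus one half of any chosen square root of $\mathscr{A}^2 - \mathscr{B}^2 + 2i\mathscr{A}\mathscr{B}\cos(2\theta)$, which already has the shape of the asserted $\lambda_1(\theta), \lambda_2(\theta)$.

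The one real point to nail down — and the reason for the case split in the statement — is that this square root can be chosen continuously (indeed smoothly) in $\theta \in [0, \pi/2]$. When $\Abs{\mathscr{A}} \geq \Abs{\mathscr{B}}$ the radicand $\mathscr{A}^2 - \mathscr{B}^2 + 2i\mathscr{A}\mathscr{B}\cos(2\theta)$ has real part $\mathscr{A}^2 - \mathscr{B}^2 \geq 0$, so it never lies on $(-\infty, 0)$ and the principal branch $\sqrt{\cdot}$ is continuous along it; this yields the first branch of the formulas. When $\Abs{\mathscr{A}} < \Abs{\mathscr{B}}$ the radicand can cross $(-\infty, 0)$ (at $\theta = \pi/4$), so instead I would write $\mathscr{A}^2 - \mathscr{B}^2 + 2i\mathscr{A}\mathscr{B}\cos(2\theta) = -\big(\mathscr{B}^2 - \mathscr{A}^2 - 2i\mathscr{A}\mathscr{B}\cos(2\theta)\big)$, where now $\mathscr{B}^2 - \mathscr{A}^2 - 2i\mathscr{A}\mathscr{B}\cos(2\theta)$ has strictly positive real part; then $i\sqrt{\mathscr{B}^2 - \mathscr{A}^2 - 2i\mathscr{A}\mathscr{B}\cos(2\theta)}$ (principal branch under the root) squares to the original radicand and is continuous in $\theta$, giving the second branch. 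In either case $\lambda_1(\theta), \lambda_2(\theta)$ are continuous on $[0, \pi/2]$ and are the two eigenvalues of $M(\theta)$.

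Finally I would assemble $\log \Delta(z - X) = \tfrac12 \int_0^\infty \log x \, d\nu_z(x)$ from the formula for $\nu_z$ in Proposition \ref{prop:nu_z}. The four atomic parts contribute $\tau(e_{ij}) \log \Abs{z - (\cdot + i\cdot)}$ exactly as in the $e = 0$ case. For the absolutely continuous part $\tau(e) \cdot \tfrac12\big( (\sigma_{z,1}^2)_*(\nu) + (\sigma_{z,2}^2)_*(\nu) \big)$, using $\log(\sigma^2) = 2\log\sigma$ this integrates to $\tau(e) \int_0^{\pi/2} \tfrac{\log \sigma_{z,1}(\theta) + \log \sigma_{z,2}(\theta)}{2} \, d\nu(\theta)$; and since the product of the singular values of a $2 \times 2$ matrix is the modulus of its determinant, $\sigma_{z,1}(\theta)\sigma_{z,2}(\theta) = \Abs{\det(z I_2 - M(\theta))} = \Abs{z - \lambda_1(\theta)}\,\Abs{z - \lambda_2(\theta)}$, so $\log \sigma_{z,1}(\theta) + \log \sigma_{z,2}(\theta) = \log \Abs{z - \lambda_1(\theta)} + \log \Abs{z - \lambda_2(\theta)}$. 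This produces the claimed integral term (Lemma \ref{lem:nu_0_1} makes $\theta = 0, \pi/2$ $\nu$-null, so restricting the continuous functions to $(0, \pi/2)$ costs nothing). The only step that is more than bookkeeping is the branch-of-the-square-root analysis in the third paragraph; the computation of $\tr M(\theta)$ and $\det M(\theta)$ and the identity $\sigma_1 \sigma_2 = \Abs{\det}$ for $2 \times 2$ matrices are routine.
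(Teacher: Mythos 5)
Your proposal is correct and follows essentially the same route as the paper: reduce to the explicit $\nu_z$ of Proposition \ref{prop:nu_z}, compute the trace and discriminant of the $2\times 2$ matrix representing $e(p+iq)e$ in the two-projection model, choose the continuous branch of the square root via the same case split on $\Abs{\mathscr{A}}$ versus $\Abs{\mathscr{B}}$, and convert $\tfrac{1}{2}(\log\sigma_{z,1}^2+\log\sigma_{z,2}^2)$ into $\log\Abs{z-\lambda_1}+\log\Abs{z-\lambda_2}$ via $\sigma_{z,1}\sigma_{z,2}=\Abs{\det}$. The only cosmetic difference is that the paper inserts a monotone-convergence step (approximating $\log x$ by $\log(x+1/n)$) to pass from the moment-derived integral identity for continuous functions to the unbounded integrand $\log x$, which you elide but which does not change the substance of the argument.
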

\begin{proof}
	When $e = 0$, the formula for $\log \Delta(z - X)$ follows easily from Proposition \ref{prop:nu_z}.
	
	For $e \neq 0$, from Proposition \ref{prop:nu_z}, for any continuous $f: [0, \infty) \to \C$,
	\begin{equation}
		\begin{aligned}
			\int_{0}^{\infty} f(x) \, d \nu_z(x) 
			& = \tau(e_{00}) f(\Abs{z - (\alpha + i \beta)}^2) \\
			& \quad + \tau(e_{0 1}) f(\Abs{z - (\alpha + i \beta')}^2) \\
			& \quad  + \tau(e_{1 0}) f(\Abs{z - (\alpha'+ i \beta)}^2) \\
			& \quad + \tau(e_{11}) f(\Abs{z - (\alpha'+ i \beta')}^2) \\
			& \quad + \tau(e) \int_{0}^{\infty} f(x) \, d \left(  \frac{(\sigma_{z, 1}^2)_* (\nu) + (\sigma_{z, 2}^2)_*(\nu)}{2} \right) \,.
		\end{aligned}
	\end{equation}
	Rewriting the final integral using the change of variables formula,
	\begin{equation}
		\begin{aligned}
			& \int_{0}^{\infty} f(x) \, d \left(  \frac{(\sigma_{z, 1}^2)_* (\nu) + (\sigma_{z, 2}^2)_*(\nu)}{2} \right) \\
			&= \frac{1}{2} \left( \int_{0}^{\infty} f(x) \, d  (\sigma_{z, 1}^2)_* (\nu) +    \int_{0}^{\infty} f(x) (\sigma_{z, 2}^2)_*(\nu)  \right) \\
			&= \frac{1}{2} \left( \int_{0}^{\pi / 2} f(\sigma_{z, 1}^2(\theta)) \, d \nu(\theta) + \int_{0}^{\pi / 2} f(\sigma_{z, 2}^2(\theta)) \, d \nu(\theta) \right) \\
			&= \frac{1}{2} \int_{0}^{\pi / 2}  f(\sigma_{z, 1}^2(\theta)) +  f(\sigma_{z, 2}^2(\theta)) \, d \nu(\theta) \,.
		\end{aligned}
	\end{equation}
	Let $f_n(x) = \log(x + 1 / n) / 2$ for $n = 1, 2, \ldots$. Then, $f_n: [0, \infty) \to \R$ are continuous and decrease to $\log (x) / 2$. Applying two previous equations for $f = f_n$ and using the monotone convergence theorem to take the limit as $n \to \infty$,
	\begin{equation}
		\label{eqn:prop:log_delta}
		\begin{aligned}
			\log \Delta(z - X) 
			&= \frac{1}{2} \int_{0}^{\infty} \log(x) \, d \nu_z(x) \\
			& = \tau(e_{00}) \log \Abs{z - (\alpha + i \beta)} \\
			& \quad + \tau(e_{0 1}) \log \Abs{z - (\alpha + i \beta')} \\
			& \quad + \tau(e_{1 0}) \log \Abs{z - (\alpha'+ i \beta)} \\
			& \quad + \tau(e_{11}) \log \Abs{z - (\alpha'+ i \beta')} \\ 
			& \quad + \frac{\tau(e)}{2} \int_{0}^{\pi / 2} \frac{\log (\sigma_{z, 1}^2(\theta)) + \log (\sigma_{z, 2}^2(\theta))}{2} \, d \nu(\theta) \,.
		\end{aligned}
	\end{equation}
	For the rest of the proof, assume that $(e M e, \restr{\tau}{eM e}) = \left( M_2 \left( L^{\infty} ((0, \pi / 2), \nu) \right) , \mathbb{E}_{\nu}[ \frac{1}{n} \tr ]       \right)$.
	
	Recall that $\sigma_{z, 1}(\theta), \sigma_{z, 2}(\theta)$ are the singular values of $e(z - X)e$. Then, the integrand of the final integral can be simplified as: 
	\begin{equation}
		\begin{aligned}
			\frac{\log (\sigma_{z, 1}^2(\theta)) + \log (\sigma_{z, 2}^2(\theta))}{2}
			&= \frac{\log (\sigma_{z, 1}^2(\theta) \sigma_{z, 2}^2(\theta))}{2} \\ 
			&= \frac{\log ( \det ((e(z - X)e)^* e(z - X)e   )    )}{2} \\
			&= \frac{\log ( \Abs{\det ((e(z - X)e))} ^2)}{2} \\
			& = \log \Abs{\det ((e(z - X)e))} \\
			& = \log \Abs{\det (z - e X e)} .
		\end{aligned}
	\end{equation}
	Given that we can verify the formulas for $\lambda_1(\theta)$, $\lambda_2(\theta)$, then $z - \lambda_1(\theta), z - \lambda_2(\theta)$ are the eigenvalues for $z - e X e$. Thus, combining the previous two equations, 
	\begin{equation}
		\begin{aligned}
			& \int_{0}^{\pi / 2} \frac{\log (\sigma_{z, 1}^2(\theta)) + \log (\sigma_{z, 2}^2(\theta))}{2} \, d \nu(\theta) \\
			&=  \int_{0}^{\pi / 2} \log \Abs{\det (z - e X e)} \, d \nu(\theta) \\
			&= \int_{0}^{\pi / 2} \log \Abs{z - \lambda_{1}(\theta)} + \log \Abs{z - \lambda_{2}(\theta)} \, d \nu(\theta) \,.
		\end{aligned}
	\end{equation}
	Substituting this expression into (\ref{eqn:prop:log_delta}) produces the desired formula for $\log \Delta(z - X)$.
	
	We return to verifying the formulas for $\lambda_1(\theta)$, $\lambda_2(\theta)$. Straightforward computation shows that the characteristic polynomial of $e X e \in \left( M_2 \left( L^{\infty} ((0, \pi / 2), \nu) \right) , \mathbb{E}_{\nu}[ \frac{1}{n} \tr ]       \right)$ is:
	\begin{equation}
		p(\lambda) = \lambda^2 - ( (\alpha + \alpha') + i (\beta + \beta')  ) \lambda + \left( \alpha \alpha' - \beta \beta' + \frac{i}{2}  ( (\alpha + \alpha')(\beta + \beta') - \mathscr{A}\mathscr{B} \cos(2 \theta))\right).
	\end{equation}
	The eigenvalues of $e X e$ are: 
	\begin{equation}
		\frac{\alpha + \alpha'}{2} + i \frac{\beta + \beta'}{2} \pm \frac{1}{2} \sqrt{ \mathscr{A}^2 - \mathscr{B}^2 + 2 i \mathscr{A} \mathscr{B} \cos(2 \theta) } \,.
	\end{equation}
	where the square root is any branch of the square root. 
	
	When $\Abs{\mathscr{A}} \geq \Abs{\mathscr{B}}$, then $\Re(\mathscr{A}^2 - \mathscr{B}^2 + 2 i \mathscr{A} \mathscr{B} \cos(2 \theta)) \geq 0$ and the expression for the eigenvalues of $e (p + i q) e$ is continuous and well-defined if we take the square root to be the principal branch of the square root. This gives the formulas for $\lambda_1(\theta), \lambda_2(\theta)$ for $\Abs{\mathscr{A}} \geq \Abs{\mathscr{B}}$.
	
	When $\Abs{\mathscr{A}} < \Abs{\mathscr{B}}$, then $\pm i \sqrt{ \mathscr{B}^2 - \mathscr{A}^2 - 2 i \mathscr{A} \mathscr{B} \cos ( 2 \theta)   }$ are also expressions for the square roots of $\mathscr{A}^2 - \mathscr{B}^2 + 2 i \mathscr{A} \mathscr{B} \cos(2 \theta)$, where the square root is the principal branch of the square root. Since $\Abs{\mathscr{A}} < \Abs{\mathscr{B}}$, then $\Re(\mathscr{B}^2 - \mathscr{A}^2 - 2 i \mathscr{A} \mathscr{B} \cos ( 2 \theta)) > 0$ and this expression is continuous and well-defined.. This gives the formulas for $\lambda_1(\theta), \lambda_2(\theta)$ for $\Abs{\mathscr{A}} < \Abs{\mathscr{B}}$.
\end{proof}

Finally, we compute the Brown measure of $X$, $\mu = \frac{1}{2\pi} \nabla^2 \log \Delta(z - X)$ in the following Proposition: 

\begin{proposition}
	\label{prop:brown_measure_p+iq_no_nu} 
	Let $\mu$ be the Brown measure of $X$.
	If $e = 0$,
	\begin{equation}
		\begin{aligned}
			\mu &= \frac{1}{2 \pi} \nabla^2 \log \Delta(z - X) \\
			& = \tau(e_{00}) \delta_{ \alpha + i \beta } + \tau(e_{0 1}) \delta_{\alpha + i \beta'} + \tau(e_{1 0}) \delta_{\alpha'+ i \beta} + \tau(e_{11}) \delta_{\alpha'+ i \beta'} .
		\end{aligned}
	\end{equation}
	If $e \neq 0$, let $\lambda_{1}, \lambda_{2}: (0, \pi / 2) \to \C$ be as in Proposition \ref{prop:log_delta}. Then,
	\begin{equation}
		\begin{aligned}
			\mu &= \frac{1}{2 \pi} \nabla^2 \log \Delta(z - X) \\
			& = \tau(e_{00}) \delta_{ \alpha + i \beta } + \tau(e_{0 1}) \delta_{\alpha + i \beta'} + \tau(e_{1 0}) \delta_{\alpha'+ i \beta} + \tau(e_{11}) \delta_{\alpha'+ i \beta'} \\ 
			& \quad + \tau(e) \mu' \,,
		\end{aligned}
	\end{equation}
	where 
	\begin{equation}
		\mu' = \frac{ (\lambda_{1})_* (\nu) + (\lambda_{2})_*(\nu)  }{2} \,.
	\end{equation}
	Additionally, $\mu' ( \{  \alpha + i \beta, \alpha' + i \beta, \alpha + i \beta', \alpha' + i \beta'  \}  ) = 0$.
\end{proposition}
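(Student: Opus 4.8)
\emph{Proof proposal.}
The plan is to apply the distributional Laplacian to the formula for $\log\Delta(z - X)$ from Proposition~\ref{prop:log_delta}, term by term, using the classical Green's identity $\frac{1}{2\pi}\nabla^2 \log\Abs{z - w} = \delta_w$ (in the sense of distributions on $\C$) for each fixed $w \in \C$. When $e = 0$, Proposition~\ref{prop:log_delta} writes $\log\Delta(z - X)$ as a finite linear combination, with coefficients $\tau(e_{ij})$, of terms $\log\Abs{z - w}$ where $w$ runs over the four points $\alpha + i\beta$, $\alpha + i\beta'$, $\alpha' + i\beta$, $\alpha' + i\beta'$; applying $\frac{1}{2\pi}\nabla^2$ termwise and the Green's identity immediately gives the stated combination of four atoms. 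When $e \neq 0$, the first four terms are treated identically and produce the four atoms $\tau(e_{ij})\delta_{(\cdot)}$, so the content of the proposition reduces to identifying $\frac{1}{2\pi}\nabla^2$ of the last term.

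Write $u(z) := \int_0^{\pi/2} \tfrac{1}{2}\big(\log\Abs{z - \lambda_1(\theta)} + \log\Abs{z - \lambda_2(\theta)}\big)\, d\nu(\theta)$, which is in $L^1_{\mathrm{loc}}(\C)$. I would pair $\frac{1}{2\pi}\nabla^2 u$ against an arbitrary $\phi \in C_c^\infty(\C)$, obtaining $\frac{1}{2\pi}\int_\C u(z)\,\nabla^2\phi(z)\,dA(z)$; after interchanging the $z$- and $\theta$-integrals and using $\frac{1}{2\pi}\int_\C \log\Abs{z - \lambda_j(\theta)}\,\nabla^2\phi(z)\,dA(z) = \phi(\lambda_j(\theta))$ for each fixed $\theta$, this becomes $\int_0^{\pi/2} \tfrac{1}{2}\big(\phi(\lambda_1(\theta)) + \phi(\lambda_2(\theta))\big)\,d\nu(\theta) = \int_\C \phi\, d\mu'$, so $\frac{1}{2\pi}\nabla^2 u = \mu'$ and the last term contributes $\tau(e)\mu'$. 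The interchange of integrals is legitimate because $z \mapsto \log\Abs{z - w}$ is integrable on compacts with a bound locally uniform in $w$, the functions $\lambda_1,\lambda_2$ have bounded range in $\C$ (since $\cos 2\theta$ is bounded), and $\nabla^2\phi$ is compactly supported; I expect this Fubini justification to be the main technical point of the argument.

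It remains to show $\mu'$ assigns no mass to the four corner points. It suffices to prove that, for each $w \in \{\alpha + i\beta,\, \alpha' + i\beta,\, \alpha + i\beta',\, \alpha' + i\beta'\}$, the sets $\lambda_1^{-1}(\{w\})$ and $\lambda_2^{-1}(\{w\})$ are empty inside $(0,\pi/2)$, since then neither pushforward $(\lambda_j)_*(\nu)$ charges $\{w\}$. I would use the Vieta relations coming from the characteristic polynomial of $eXe$ computed in the proof of Proposition~\ref{prop:log_delta}: $\lambda_1(\theta) + \lambda_2(\theta) = (\alpha + \alpha') + i(\beta + \beta')$ is constant in $\theta$, while $\lambda_1(\theta)\lambda_2(\theta)$ depends on $\theta$ only through $\cos 2\theta$. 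If $\lambda_j(\theta)$ equalled a corner point, the other eigenvalue would be forced to be the opposite corner, and matching the product against the constant term of the characteristic polynomial would force $\cos 2\theta \in \{1, -1\}$, i.e. $\theta \in \{0, \pi/2\}$ — impossible for $\theta \in (0,\pi/2)$; here one uses $\alpha \neq \alpha'$, $\beta \neq \beta'$ to know the coefficient of $\cos 2\theta$ is nonzero. (Alternatively, allow $\theta \in \{0,\pi/2\}$ and invoke Lemma~\ref{lem:nu_0_1} to get $\nu(\{0,\pi/2\}) = 0$.) This gives $\mu'(\{\alpha + i\beta, \alpha' + i\beta, \alpha + i\beta', \alpha' + i\beta'\}) = 0$ and finishes the proof.
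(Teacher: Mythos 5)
Your proposal is correct and follows essentially the same route as the paper: apply $\frac{1}{2\pi}\nabla^2\log\Abs{\cdot - w} = \delta_w$ termwise to the formula from Proposition~\ref{prop:log_delta}, justifying the interchange of the Laplacian with the $\theta$-integral via Fubini and local integrability of $\log\Abs{z-w}$. For the final claim the paper simply notes $\lambda_i(\{0,\pi/2\})$ is the set of corners and invokes Lemma~\ref{lem:nu_0_1} (your parenthetical alternative); your primary Vieta-based argument that no interior $\theta$ can map to a corner is a valid, slightly more explicit justification of the same fact.
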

\begin{proof}
	If $e = 0$, the result follows from directly applying $\frac{1}{2 \pi} \nabla^2 \log \Abs{\cdot - \lambda} = \delta_\lambda$   to the expression for $\log \Delta(z - X)$ in Proposition \ref{prop:log_delta}.
	
	If $e \neq 0$, we take the distributional Laplacian of the result from Proposition \ref{prop:log_delta}: 
	\begin{equation}
		\begin{aligned}
			\frac{1}{2 \pi} \nabla^2 \log \Delta(z - X) 
			&= \frac{1}{2 \pi}\nabla^2 \frac{1}{2} \int_{0}^{\infty} \log(x) \, d \nu_z(x) \\
			& = \frac{1}{2 \pi}\nabla^2 \biggl(  \tau(e_{00}) \log \Abs{z - (\alpha + i \beta)} \\
			& \quad + \tau(e_{0 1}) \log \Abs{z - (\alpha + i \beta')} \\
			& \quad + \tau(e_{1 0}) \log \Abs{z - (\alpha'+ i \beta)} \\
			& \quad + \tau(e_{11}) \log \Abs{z - (\alpha'+ i \beta')} \\ 
			& \quad + \tau(e) \int_{0}^{\pi / 2} \frac{\log \Abs{z - \lambda_{1}(\theta)} + \log \Abs{z - \lambda_{2}(\theta)}}{2} \, d \nu(\theta) \biggr) .
		\end{aligned}
	\end{equation}
	As in the case when $e = 0$, applying $\frac{1}{2 \pi} \nabla^2 \log \Abs{\cdot - \lambda} = \delta_\lambda$ directly to the first $4$ atomic terms of $\log \Delta(z - X)$ produces the weighted sum of the $4$ atoms in $\mu$.
	
	To apply $\frac{1}{2 \pi} \nabla^2 \log \Abs{\cdot - \lambda} = \delta_\lambda$ for the final integral, we apply Fubini's theorem. Consider $f \in C_c^{\infty}(\C)$. Since $\log \Abs{z - w} \in L^1_{\text{loc}}(\C)$ for all $w \in \C$, then we may apply Fubini's theorem for $i = 1, 2$: 
	\begin{equation}
		\begin{aligned}
			& \brackets{f, \frac{1}{2 \pi} \nabla^2  \int_{0}^{\pi / 2} \log \Abs{z - \lambda_{i}(\theta)} \, d \nu(\theta)  } \\
			& = \brackets{\nabla^2 f, \frac{1}{2 \pi}\int_{0}^{\pi / 2} \log \Abs{z - \lambda_{i}(\theta)} \, d \nu(\theta) } \\
			&= \int_{\C}^{} \nabla^2 f(z) \left( \frac{1}{2 \pi}\int_{0}^{\pi / 2} \log \Abs{z - \lambda_{i}(\theta)} \, d \nu(\theta)\right) \, d \lambda(z) \\
			&= \int_{0}^{\pi / 2} \left( \int_{\C}^{} \frac{1}{2 \pi}  \nabla^2 f(z)  \log \Abs{z - \lambda_{i}(\theta)} \, d d \lambda(z)\right)  \, d \nu(\theta)  \\
			&= \int_{0}^{\pi / 2} f(\lambda_i(\theta))  \, d \nu(\theta) \,.
		\end{aligned} 
	\end{equation} 
	Hence, for $i = 1, 2$,
	\begin{equation}
		\frac{1}{2 \pi} \nabla^2 \int_{0}^{\pi / 2} \log \Abs{z - \lambda_{i}(\theta)} \, d \nu(\theta) = (\lambda_i)_*(\nu) \,.
	\end{equation}
	Thus, the Laplacian of the integral is: 
	\begin{equation}
		\frac{1}{2 \pi} \nabla^2 \int_{0}^{\pi / 2} \frac{\log \Abs{z - \lambda_{1}(\theta)} + \log \Abs{z - \lambda_{2}(\theta)}}{2} \, d \nu(\theta)
		= \frac{ (\lambda_{1})_* (\nu) + (\lambda_{2})_*(\nu)  }{2} 
		= \mu' .
	\end{equation}
	Combining this with the atomic terms gives the desired Brown measure for $X$.
	
	For the final note, it follows from Lemma \ref{lem:nu_0_1} and the fact that $\lambda_i(\{0, \pi / 2\}) = \{\alpha + i \beta, \alpha' + i \beta, \alpha + i \beta', \alpha' + i \beta'\}$ that
	\begin{equation}
		\mu' ( \{  \alpha + i \beta, \alpha' + i \beta, \alpha + i \beta', \alpha' + i \beta'  \}  ) 
		= \nu ( \{0, \pi / 2\}  ) = 0 \,.
	\end{equation}
\end{proof}

Even though the weights $\tau(e_{i j}), \tau(e)$ and the measure $\nu$ have not been determined yet, we can already say something about the support of the Brown measure in general. 

First, we prove a lemma about a relevant hyperbola and rectangle: 

\begin{lemma}
	\label{lem:hyperbola_rectangle}
	Let $\alpha, \alpha', \beta, \beta' \in \R$, where $\alpha \neq \alpha'$ and $\beta \neq \beta'$. Let $\mathscr{A} = \alpha' - \alpha$ and $\mathscr{B} = \beta' - \beta$.
	
	Let 
	\begin{equation}
		\begin{aligned}
			H & = \left\lbrace z = x + i y \in \C :	\left( x - \frac{\alpha + \alpha'}{2}  \right)^2 - \left(  y - \frac{\beta + \beta'}{2} \right)^2 = \frac{\mathscr{A}^2 - \mathscr{B}^2}{4}    \right\rbrace \\
			R & =  \left\lbrace z = x + i y \in \C : x \in  [\alpha \wedge \alpha', \alpha \vee \alpha' ], y \in [\beta \wedge \beta', \beta \vee \beta'] \right\rbrace \,.
		\end{aligned}
	\end{equation}		
	The equation of $H$ is equivalent to: 
	\begin{equation}
		(x - \alpha)(x - \alpha') = (y - \beta)(y - \beta') \,.
	\end{equation}
	The equation of $H$ in coordinates 
	\begin{equation}
		\begin{aligned}
			x' & = x - \frac{\alpha + \alpha'}{2} \\
			y' &= y - \frac{\beta + \beta'}{2}
		\end{aligned}
	\end{equation}
	is 
	\begin{equation}
		\label{eqn:lem:hyperbola_rectangle}
		(x')^2 - \frac{\mathscr{A}^2  }{4} = (y')^2 - \frac{\mathscr{B}^2}{4} .
	\end{equation}
	It follows that for $(x, y) \in H$, 
	\begin{equation}
		(x, y) \in R \iff (\ref{eqn:lem:hyperbola_rectangle}) \leq 0 \iff x \in  [\alpha \wedge \alpha', \alpha \vee \alpha' ] \text{ or } y \in [\beta \wedge \beta', \beta \vee \beta'] \,.
	\end{equation}
	Similarly, 
	\begin{equation}
		(x, y) \in \interior{R} \iff (\ref{eqn:lem:hyperbola_rectangle}) < 0 \iff x \in  (\alpha \wedge \alpha', \alpha \vee \alpha') \text{ or } y \in (\beta \wedge \beta', \beta \vee \beta') \,.
	\end{equation}	
	Alternatively, the equation of the hyperbola is:
	\begin{equation}
		\Re \left( \left( z - \frac{\alpha + \alpha'}{2} - i \frac{\beta + \beta'}{2}  \right)^2 \right) =   \frac{\mathscr{A}^2 - \mathscr{B}^2}{4} \,.
	\end{equation}
	If $z \in H$, then $z \in R$ if and only if
	\begin{equation}
		\Abs{\Im \left( \left( z - \frac{\alpha + \alpha'}{2} - i \frac{\beta + \beta'}{2}  \right)^2\right)}  \leq \frac{\Abs{\mathscr{A} \mathscr{B}}}{2} \,.
	\end{equation}
\end{lemma}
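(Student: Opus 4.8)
The plan is to reduce the whole statement to completing the square in the two real variables $x,y$ and then doing elementary sign bookkeeping. Throughout, work in the shifted coordinates $x' = x - \tfrac{\alpha+\alpha'}{2}$, $y' = y - \tfrac{\beta+\beta'}{2}$ and with the complex variable $w = x' + i y' = z - \tfrac{\alpha+\alpha'}{2} - i\tfrac{\beta+\beta'}{2}$. The first step is the algebraic identity
$(x-\alpha)(x-\alpha') = x^2 - (\alpha+\alpha')x + \alpha\alpha' = (x')^2 - \tfrac{\mathscr{A}^2}{4}$,
obtained by completing the square and using $\tfrac{(\alpha+\alpha')^2}{4} - \alpha\alpha' = \tfrac{(\alpha-\alpha')^2}{4} = \tfrac{\mathscr{A}^2}{4}$; symmetrically $(y-\beta)(y-\beta') = (y')^2 - \tfrac{\mathscr{B}^2}{4}$. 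Plugging these two identities into the defining equation of $H$ gives at once the equivalent forms $(x-\alpha)(x-\alpha') = (y-\beta)(y-\beta')$ and $(x')^2 - \tfrac{\mathscr{A}^2}{4} = (y')^2 - \tfrac{\mathscr{B}^2}{4}$, i.e.\ (\ref{eqn:lem:hyperbola_rectangle}). For the complex reformulation, expand $w^2 = \bigl((x')^2 - (y')^2\bigr) + 2 i x' y'$, so $\Re(w^2) = (x')^2 - (y')^2$ and (\ref{eqn:lem:hyperbola_rectangle}) is precisely $\Re(w^2) = \tfrac{\mathscr{A}^2 - \mathscr{B}^2}{4}$, which is the "alternatively" equation for $H$.

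Next I handle the rectangle conditions. The one-variable fact I will use is: for real $t$, one has $t \in [\alpha \wedge \alpha', \alpha \vee \alpha']$ iff $(t-\alpha)(t-\alpha') \le 0$, and $t$ is in the open interval iff the product is $<0$; by the first step these are the conditions $(x')^2 - \tfrac{\mathscr{A}^2}{4} \le 0$ (resp.\ $<0$), and similarly in $y$. Now for a point $(x,y) \in H$ set $c := (x')^2 - \tfrac{\mathscr{A}^2}{4} = (y')^2 - \tfrac{\mathscr{B}^2}{4}$, the common value provided by (\ref{eqn:lem:hyperbola_rectangle}). Then "$x \in [\alpha\wedge\alpha',\alpha\vee\alpha']$" and "$y \in [\beta\wedge\beta',\beta\vee\beta']$" are each literally the single inequality $c \le 0$; hence membership in $R$ (the conjunction of the two), the disjunction of the two, and "$c \le 0$" all coincide, which is exactly the claimed chain of equivalences, and the parallel argument with strict inequalities yields the $\interior{R}$ version once one notes $\interior{R} = (\alpha\wedge\alpha',\alpha\vee\alpha') \times (\beta\wedge\beta',\beta\vee\beta')$.

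The final claim — the description of $R \cap H$ through $\Im(w^2)$ — is the only step with any content. Since $\Im(w^2) = 2 x' y'$, the asserted bound $\lvert\Im(w^2)\rvert \le \tfrac{\lvert\mathscr{A}\mathscr{B}\rvert}{2}$ is equivalent to $(x' y')^2 \le \tfrac{\mathscr{A}^2\mathscr{B}^2}{16}$. On $H$ we have $(x')^2 = c + \tfrac{\mathscr{A}^2}{4}$ and $(y')^2 = c + \tfrac{\mathscr{B}^2}{4}$, so $(x' y')^2 = \bigl(c + \tfrac{\mathscr{A}^2}{4}\bigr)\bigl(c + \tfrac{\mathscr{B}^2}{4}\bigr)$, and expanding gives $(x'y')^2 - \tfrac{\mathscr{A}^2\mathscr{B}^2}{16} = c\bigl(c + \tfrac{\mathscr{A}^2+\mathscr{B}^2}{4}\bigr)$. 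The decisive observation is that the second factor is strictly positive: $c + \tfrac{\mathscr{A}^2+\mathscr{B}^2}{4} = (x')^2 + \tfrac{\mathscr{B}^2}{4} \ge \tfrac{\mathscr{B}^2}{4} > 0$ because $\mathscr{B} = \beta' - \beta \neq 0$. Therefore $(x'y')^2 \le \tfrac{\mathscr{A}^2\mathscr{B}^2}{16} \iff c \le 0$, and by the previous paragraph $c \le 0 \iff z \in R$ for $z \in H$, which finishes the proof.

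I would flag the third paragraph as the only place requiring thought; the rest is completing the square and reading off signs. There is no real obstacle, but one must remember that both $\mathscr{A} \neq 0$ and $\mathscr{B} \neq 0$ are being used — the former so that $R$ is a nondegenerate rectangle and $H$ a genuine hyperbola, the latter explicitly for the strict positivity of the auxiliary factor in the last step (equivalently one could write that factor as $(y')^2 + \tfrac{\mathscr{A}^2}{4}$ and invoke $\mathscr{A} \neq 0$).
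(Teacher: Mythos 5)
Your proof is correct and follows essentially the same route as the paper: complete the square, reduce everything to the sign of the common value $c = (x')^2 - \tfrac{\mathscr{A}^2}{4} = (y')^2 - \tfrac{\mathscr{B}^2}{4}$ on $H$, and note that the conjunction and disjunction of the two interval conditions coincide there. The only divergence is the final step, where you factor $(x'y')^2 - \tfrac{\mathscr{A}^2\mathscr{B}^2}{16} = c\bigl(c + \tfrac{\mathscr{A}^2+\mathscr{B}^2}{4}\bigr)$ and observe the second factor is positive; this is if anything cleaner than the paper's two-implication argument via the or/and equivalence, which elides some absolute values.
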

\begin{proof}
	The equivalent equations for the hyperbola are straightforward to check. 
	
	The equivalences for the closed conditions follow from the following equivalences and the equation of the hyperbola in $x', y'$ coordinates
	\begin{equation}
		\begin{aligned}
			(x, y) \in R & \iff (x')^2 - \frac{\mathscr{A}^2}{4} \leq 0 \text{ and } (y')^2 - \frac{\mathscr{B}^2}{4} \leq 0 \\
		\end{aligned}
	\end{equation}
	\begin{equation}
		\begin{aligned}
			(x')^2 - \frac{\mathscr{A}^2  }{4} \leq 0  & \iff \Abs{x'} \leq \frac{\Abs{\mathscr{A}}}{2} & \iff x \in [\alpha \wedge \alpha', \alpha \vee \alpha' ]  \\
			(y')^2 - \frac{\mathscr{B}^2}{4} \leq 0 & \iff  \Abs{y'} \leq \frac{\Abs{\mathscr{B}}}{2} & \iff y \in [\beta \wedge \beta', \beta \vee \beta']  \,.
		\end{aligned}
	\end{equation}
	The equivalences for the open conditions follow from similar equivalences with the closed conditions replaced by open conditions.
	
	The last equation of the hyperbola follows from direct computation. For the inequality of the rectangle, observe that
	\begin{equation}
		\Im \left( \left( z - \frac{\alpha + \alpha'}{2} - i \frac{\beta + \beta'}{2}  \right)^2  \right)  = 2 x' y' \,.
	\end{equation} 
	In light of what was previously shown, 
	\begin{equation}
		x' y' \leq \frac{\Abs{\mathscr{A} \mathscr{B}}}{4} \Longrightarrow x' \leq \frac{\Abs{\mathscr{A}}}{2} \text{ or } y' \leq \frac{\Abs{\mathscr{B}}}{2} \Longrightarrow z \in R \,.
	\end{equation}
	Conversely, 
	\begin{equation}
		z \in R \Longrightarrow x' \leq \frac{\Abs{\mathscr{A}}}{2} \text{ and } y' \leq \frac{\Abs{\mathscr{B}}}{2} \Longrightarrow x' y' \leq \frac{\Abs{\mathscr{A} \mathscr{B}}}{4} \,.
	\end{equation}
\end{proof}

Now, we will show that the support of the Brown measure is contained in $H \cap R$:

\begin{corollary}
	\label{cor:brown_measure_support}
	Let $\mathscr{A} = \alpha' - \alpha$ and $\mathscr{B} = \beta' - \beta$.
	
	The continuous functions $\lambda_1, \lambda_2: [0, \pi / 2] \to \C$ in Proposition \ref{prop:log_delta} parameterize the intersection of the hyperbola 
	\begin{equation}
		H = \left\lbrace z = x + i y \in \C :	\left( x - \frac{\alpha + \alpha'}{2}  \right)^2 - \left(  y - \frac{\beta + \beta'}{2} \right)^2 = \frac{\mathscr{A}^2 - \mathscr{B}^2}{4}    \right\rbrace 
	\end{equation}
	with the rectangle 
	\begin{equation}
		R = \left\lbrace z = x + i y \in \C : x \in  [\alpha \wedge \alpha', \alpha \vee \alpha' ], y \in [\beta \wedge \beta', \beta \vee \beta'] \right\rbrace  \,.
	\end{equation}
	When $\Abs{\mathscr{A}} \geq \Abs{\mathscr{B}}$, $\lambda_1$ parameterizes the left component of $H \cap R$ and $\lambda_2$ parameterizes the right component of $H \cap R$. When $\Abs{\mathscr{A}} < \Abs{\mathscr{B}}$, $\lambda_1$ parameterizes the bottom component of $H \cap R$ and $\lambda_2$ parameterizes the top component of $H \cap R$.
	
	The support of the Brown measure is contained in $H \cap R$.
\end{corollary}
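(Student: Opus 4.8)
The plan is to read off the structure of $\mu$ from Proposition \ref{prop:brown_measure_p+iq_no_nu} and then invoke the geometry established in Lemma \ref{lem:hyperbola_rectangle}. By that proposition, $\mu$ is a convex combination of the point masses at $\alpha+i\beta,\ \alpha+i\beta',\ \alpha'+i\beta,\ \alpha'+i\beta'$ and, when $e\neq 0$, of $\mu'=\tfrac12\bigl((\lambda_1)_*\nu+(\lambda_2)_*\nu\bigr)$, so $\Supp(\mu)$ is contained in the union of those four corner points with $\lambda_1([0,\pi/2])\cup\lambda_2([0,\pi/2])$. Each corner point satisfies $(x-\alpha)(x-\alpha')=0=(y-\beta)(y-\beta')$ and lies in $R$, hence lies in $H\cap R$ by Lemma \ref{lem:hyperbola_rectangle}. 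So the corollary reduces to showing that $\lambda_1$ and $\lambda_2$ are continuous surjections of $[0,\pi/2]$ onto, respectively, the left/bottom and right/top component of $H\cap R$; the claimed containment of $\Supp(\mu)$ is then immediate.

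Put $c=\tfrac{\alpha+\alpha'}{2}+i\,\tfrac{\beta+\beta'}{2}$, the common centre of $H$ and $R$. First I would verify, straight from the two displayed formulas for $\lambda_1(\theta),\lambda_2(\theta)$ in Proposition \ref{prop:log_delta}, the identity
\[
\bigl(\lambda_j(\theta)-c\bigr)^2 \;=\; \frac{\mathscr{A}^2-\mathscr{B}^2}{4} \;+\; \frac{i}{2}\,\mathscr{A}\mathscr{B}\cos(2\theta),\qquad j=1,2,
\]
which holds in both regimes: for $\Abs{\mathscr A}\ge\Abs{\mathscr B}$ it is immediate, and for $\Abs{\mathscr A}<\Abs{\mathscr B}$ the factor $i$ in front of the square root contributes $-1$, turning the principal square root of $\mathscr B^2-\mathscr A^2-2i\mathscr A\mathscr B\cos(2\theta)$ into the same value. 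Comparing real parts with the equation $\Re\bigl((z-c)^2\bigr)=\tfrac14(\mathscr{A}^2-\mathscr{B}^2)$ for $H$ from Lemma \ref{lem:hyperbola_rectangle} gives $\lambda_j(\theta)\in H$; and $\bigl|\Im\bigl((\lambda_j(\theta)-c)^2\bigr)\bigr|=\tfrac12\Abs{\mathscr A\mathscr B}\,\Abs{\cos(2\theta)}\le\tfrac12\Abs{\mathscr A\mathscr B}$ combined with the rectangle criterion of that lemma gives $\lambda_j(\theta)\in R$. Hence $\lambda_j([0,\pi/2])\subseteq H\cap R$.

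Next I would pin down which component each curve sweeps, and that it sweeps all of it. As $\theta$ increases from $0$ to $\pi/2$, $\cos(2\theta)$ decreases monotonically from $1$ to $-1$, so the radicand runs along a horizontal segment with nonnegative (and, in the regime $\Abs{\mathscr A}<\Abs{\mathscr B}$, strictly positive) real part; the principal square root is continuous along such a segment, so $\lambda_1,\lambda_2$ extend continuously to the closed interval and the sign of $\Re\bigl(\lambda_j(\theta)-c\bigr)$ is constant (resp.\ of $\Im\bigl(\lambda_j(\theta)-c\bigr)$, because of the extra factor $i$, in the second regime). Thus $\lambda_1$ stays on the left branch and $\lambda_2$ on the right branch of $H$ (resp.\ the bottom and top branches). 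Evaluating at $\theta=0$ and $\theta=\pi/2$ and using $(\mathscr A\pm i\mathscr B)^2=\mathscr A^2-\mathscr B^2\pm 2i\mathscr A\mathscr B$ shows that the endpoints of each curve are two adjacent corners of $R$ (for instance $\lambda_1(0)=\alpha+i\beta$ and $\lambda_1(\pi/2)=\alpha+i\beta'$ when $\mathscr A>0$). Finally I would apply the intermediate value theorem to $\theta\mapsto\Im\bigl(\lambda_2(\theta)-c\bigr)$ (resp.\ to $\theta\mapsto\Re\bigl(\lambda_2(\theta)-c\bigr)$ in the second regime): being continuous and equal to $\pm\tfrac12\Abs{\mathscr B}$ at the two endpoints, it attains every value of $[-\tfrac12\Abs{\mathscr B},\tfrac12\Abs{\mathscr B}]$; since a point on a fixed branch of $H$ is determined by its imaginary part, and by Lemma \ref{lem:hyperbola_rectangle} such a point lies in $R$ exactly when that imaginary part belongs to $[-\tfrac12\Abs{\mathscr B},\tfrac12\Abs{\mathscr B}]$, the curve $\lambda_2$ is onto the right component of $H\cap R$; the same argument handles $\lambda_1$.

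The algebraic identities for $(\lambda_j(\theta)-c)^2$ and the corner evaluations are routine. The step that needs real care is the last paragraph: one must track the sign of $\mathscr A\mathscr B$, which interchanges the roles of $\theta=0$ and $\theta=\pi/2$, and handle the degenerate case $\Abs{\mathscr A}=\Abs{\mathscr B}$, in which $H$ collapses to two lines through $c$ and the radicand passes through $0$. There ``left/right component'' should be read as the $\Re(z-c)\le 0$ and $\Re(z-c)\ge 0$ halves of $H\cap R$, and the same radicand computation shows that $\lambda_1$ and $\lambda_2$ still sweep these two halves.
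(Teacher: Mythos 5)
Your proposal is correct and follows essentially the same route as the paper: both rest on the identity $(\lambda_j(\theta)-c)^2 = \tfrac{1}{4}\left(\mathscr{A}^2-\mathscr{B}^2+2i\mathscr{A}\mathscr{B}\cos(2\theta)\right)$ together with the reformulations of $H$ and $R$ from Lemma \ref{lem:hyperbola_rectangle}. The only divergence is in the surjectivity step, and it is minor: the paper observes that for each $\theta$ the points $\lambda_1(\theta),\lambda_2(\theta)$ are exactly the two solutions of that equation, so they sweep all of $H\cap R$ at once, whereas you run an intermediate-value argument on $\Im(\lambda_2(\theta)-c)$; your handling of the component identification and of the degenerate case $\Abs{\mathscr{A}}=\Abs{\mathscr{B}}$ is more explicit than the paper's.
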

\begin{proof}
	From Lemma \ref{lem:hyperbola_rectangle}, $z = x + i y$ is on $H \cap R$ if and only if 
	\begin{equation}
		\left( z - \frac{\alpha + \alpha'}{2} - i \frac{\beta + \beta'}{2}  \right)^2 = \frac{\mathscr{A}^2 - \mathscr{B}^2 + 2 i \mathscr{A} \mathscr{B} \cos (2 \theta)}{4} \quad \text{ for } \theta \in [0, \pi / 2] \,.
	\end{equation}
	Note that $\lambda_1(\theta), \lambda_2(\theta)$ are exactly the solutions to this equation. Hence, the $\lambda_i(\theta)$ parameterize the intersection of the hyperbola and rectangle. 
	
	For the cases of $\lambda_i$ parameterizing the left/right or top/bottom components, it is easy to see from the formulas that the $\lambda_i$ map into the left/right or top/bottom components, and since the $\lambda_i(\theta)$ parameterize all of $H \cap R$, then the $\lambda_i$ have to parameterize the entire left/right or top/bottom components. 
	
	As the $\lambda_i$ parameterize $H \cap R$, then $\mu'$ is supported on $H \cap R$. The 4 atoms in the Brown measure are on $(\partial R) \cap H$ (at the 4 corners of the rectangle). 
	
	Thus, we conclude that the Brown measure is supported on $H \cap R$.
\end{proof}

Figure \ref{fig:support_example} illustrates the conclusion of Corollary \ref{cor:brown_measure_support} where the Brown measure of $X = p + i q$ is approximated by the ESD of $X_n = P_n + i Q_n$ for deterministic $P_n, Q_n \in M_n(\C)$.

\begin{figure}[H]
	\centering
	\includegraphics[width = .5 \textwidth]{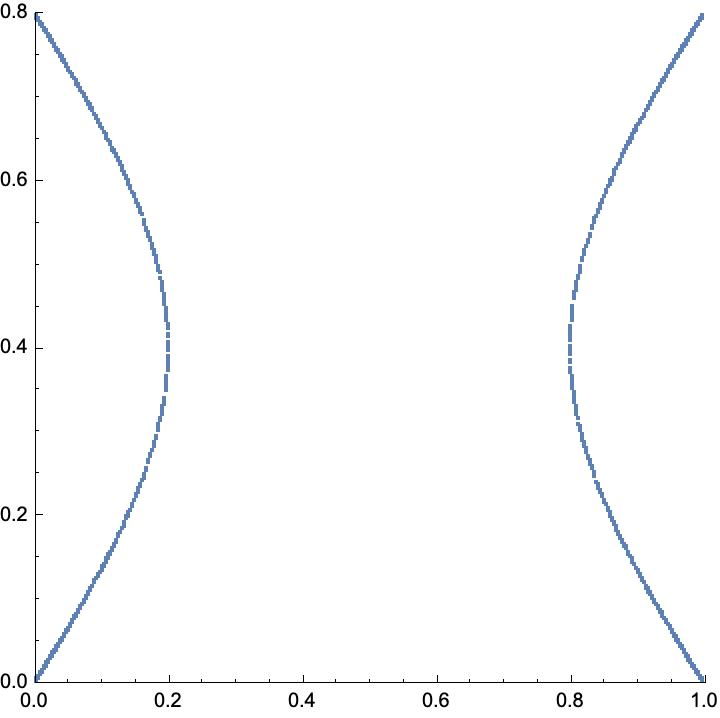}
	\caption{ESD of $X_n = P_n + i Q_n$ \\ $\mu_{P_n} = (1/2) \delta_0 + (1/2) \delta_{1}$ \\ $\mu_{Q_n} = (1 / 2) \delta_0 + (1/2) \delta_{4/5}$ \\ $n = 1000$}
	\label{fig:support_example}
\end{figure}

Motivated by the hyperbola and rectangle appearing in Corollary \ref{cor:brown_measure_support}, we introduce the definition of the hyperbola and rectangle associated with $X$: 

\begin{definition}
	\label{def:hyperbola_rectangle}
	Let $p, q \in (M, \tau)$ be Hermitian with laws: 
	\begin{equation}
		\begin{aligned}
			\mu_p & = a \delta_\alpha + (1 - a) \delta_{\alpha'} \\
			\mu_q &= b \delta_\beta + (1 - b) \delta_{\beta'} \,.
		\end{aligned}
	\end{equation}
	Let $\mathscr{A} = \alpha' - \alpha$ and $\mathscr{B} = \beta' - \beta$. 
	
	The \textbf{hyperbola associated with $X$} is 
	\begin{equation}
		H = \left\lbrace z = x + i y \in \C :	\left( x - \frac{\alpha + \alpha'}{2}  \right)^2 - \left(  y - \frac{\beta + \beta'}{2} \right)^2 = \frac{\mathscr{A}^2 - \mathscr{B}^2}{4}    \right\rbrace \,.
	\end{equation}
	The \textbf{rectangle associated with $X$} is
	\begin{equation}
		R =  \left\lbrace z = x + i y \in \C : x \in  [\alpha \wedge \alpha', \alpha \vee \alpha' ], y \in [\beta \wedge \beta', \beta \vee \beta'] \right\rbrace  \,.
	\end{equation}
\end{definition}

While the weights $\tau(e_{i j}), \tau(e)$ are as of yet undetermined in the case $p$ and $q$ are free, there are some general relationships between the weights and traces of the spectral projections of $p$ and $q$:

\begin{proposition}
	\label{prop:atoms_weights_relation}
	Let $\mu$ be the Brown measure of $X$ and let $\mu'$ be as in Proposition \ref{prop:brown_measure_p+iq_no_nu}.
	
	Then, the $\mu'$ measure of each of the two components of $H \cap R$ is equal to $1 / 2$. Additionally,
	\begin{equation}
		\begin{aligned}
			\tau \left(\chi_{ \{\alpha\}} (p) \right) & = \mu( \{\alpha + i \beta, \alpha + i \beta' \}  ) + \tau(e) / 2 \\
			\tau \left(\chi_{ \{\alpha'\}} (p) \right) & = \mu( \{\alpha' + i \beta, \alpha' + i \beta' \}  ) + \tau(e) / 2 \\
			\tau \left(\chi_{ \{\beta\}} (q) \right) & = \mu( \{\alpha + i \beta, \alpha' + i \beta \}  ) + \tau(e) / 2 \\
			\tau \left(\chi_{ \{\beta'\}} (q) \right) & = \mu( \{\alpha + i \beta', \alpha' + i \beta' \}  ) + \tau(e) / 2 \,.
		\end{aligned}
	\end{equation}
	If $\Abs{\mathscr{A}} \geq \Abs{\mathscr{B}}$, the Brown measure of the left component of $H \cap R$ is $\tau(\chi_{ \{ \alpha \wedge \alpha'  \}(p)})$ and the Brown measure of the right component of $H \cap R$ is $\tau(\chi_{  \{\alpha \vee \alpha' \}}(p))$.
	
	If $\Abs{\mathscr{A}} < \Abs{\mathscr{B}}$, the Brown measure of the bottom component of $H \cap R$ is $\tau(\chi_{ \{\beta \wedge \beta'\}}(q))$ and the Brown measure of the top component of $H \cap R$ is $\tau(\chi_{ \{\beta \vee \beta'\}  }(q) )$.
\end{proposition}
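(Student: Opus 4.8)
The plan is to extract everything from Proposition~\ref{prop:brown_measure_p+iq_no_nu} together with the parameterization in Corollary~\ref{cor:brown_measure_support} and the trace identities from Subsection~\ref{subsec:two_projections}. First I would establish that each component of $H \cap R$ has $\mu'$-measure $1/2$. Recall $\mu' = \frac{1}{2}\left( (\lambda_1)_*(\nu) + (\lambda_2)_*(\nu) \right)$, where $\nu$ is a probability measure on $(0,\pi/2)$. By Corollary~\ref{cor:brown_measure_support}, when $\Abs{\mathscr{A}} \geq \Abs{\mathscr{B}}$, $\lambda_1$ maps into the left component and $\lambda_2$ into the right component (the symmetric statement holds with top/bottom when $\Abs{\mathscr{A}} < \Abs{\mathscr{B}}$). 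Hence $(\lambda_1)_*(\nu)$ is supported on the left component with total mass $\nu((0,\pi/2)) = 1$, and likewise $(\lambda_2)_*(\nu)$ on the right. So the $\mu'$-mass of the left component is $\frac{1}{2}(1 + 0) = \frac12$, and similarly for the right. (One should note the two components are disjoint, which follows from the hyperbola/rectangle geometry in Lemma~\ref{lem:hyperbola_rectangle} together with $\mathscr{A},\mathscr{B} \neq 0$, modulo the boundary corner points, which carry no $\mu'$ mass by the last sentence of Proposition~\ref{prop:brown_measure_p+iq_no_nu}.)

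Next I would prove the four trace identities. The idea is to express each spectral projection of $p$ in terms of the central projections $e_{ij}, e$. From \eqref{eqn:e_{ij}_p_q}, $\chi_{\{\alpha\}}(p) = 1 - p'$ dominates $e_{00}$ and $e_{01}$ (since these are meets involving $\chi_{\{\alpha\}}(p)$), and on the $eMe$ part, $\tilde p' = epe$ has trace $1/2$ by Lemma~\ref{lem:trace_1/2}, so $e\chi_{\{\alpha\}}(p)e = e - \tilde p'$ has trace $\tau(e)/2$. Concretely, decomposing $\chi_{\{\alpha\}}(p)$ across the five central projections and using that $\chi_{\{\alpha\}}(p)$ is $e_{00} + e_{01}$ on those summands (and $0$ on $e_{10}, e_{11}$), plus $(e - \tilde p')$ on the $e$-summand, gives
\begin{equation}
	\tau(\chi_{\{\alpha\}}(p)) = \tau(e_{00}) + \tau(e_{01}) + \tau(e)/2 .
\end{equation}
Since Proposition~\ref{prop:brown_measure_p+iq_no_nu} identifies $\mu(\{\alpha + i\beta\}) = \tau(e_{00})$ and $\mu(\{\alpha + i\beta'\}) = \tau(e_{01})$, this is exactly the first claimed identity. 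The other three follow by the same bookkeeping with the roles of $\alpha/\alpha'$ and $\beta/\beta'$, and of $p/q$, interchanged (for $q$, use $eqe$ has trace $1/2$ as well).

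Finally, I would combine the two pieces to get the component masses. Suppose $\Abs{\mathscr{A}} \geq \Abs{\mathscr{B}}$ and say $\alpha < \alpha'$ (the case $\alpha > \alpha'$ is symmetric), so the left component of $H \cap R$ is the piece with $x$-coordinate near $\alpha$. The Brown measure of that component is the mass of the two atoms lying on it, namely $\mu(\{\alpha + i\beta, \alpha + i\beta'\}) = \tau(e_{00}) + \tau(e_{01})$, plus $\tau(e)$ times the $\mu'$-mass of that component, which is $\tau(e) \cdot \tfrac12$. By the trace identity just proved this sum is $\tau(\chi_{\{\alpha\}}(p)) = \tau(\chi_{\{\alpha \wedge \alpha'\}}(p))$, as claimed; the right component is handled identically with $\alpha'$. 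The case $\Abs{\mathscr{A}} < \Abs{\mathscr{B}}$ is the same argument using $q$ and the top/bottom components. I do not expect a serious obstacle here; the only point requiring a little care is being precise about which atoms sit on which component of $H \cap R$ — this is read off from the geometry in Lemma~\ref{lem:hyperbola_rectangle} (the corners of $R$ are the endpoints of the arcs, and which corners bound the left versus right arc is determined by the sign of $\alpha' - \alpha$) — and checking that the degenerate case $e = 0$ is consistent, where the identities reduce to the statement that the $\mu$-masses of the atoms partition $\tau(\chi_{\{\alpha\}}(p))$ etc., which again follows from \eqref{eqn:e_{ij}_p_q}.
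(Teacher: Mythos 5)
Your proposal is correct and follows essentially the same route as the paper's proof: the $1/2$-mass claim from the fact that each $\lambda_i$ pushes the probability measure $\nu$ onto a single component, the trace identities from the central decomposition $\chi_{\{\alpha\}}(p) = e_{00} + e_{01} + e\chi_{\{\alpha\}}(p)e$ combined with Lemma \ref{lem:trace_1/2}, and the component masses by adding the two corner atoms to $\tau(e)\cdot\frac{1}{2}$. The extra remarks you add (disjointness of the components, the degenerate case $e=0$) are harmless refinements of the same argument.
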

\begin{proof}
	Recall that 
	\begin{equation}
		\mu' = \frac{ (\lambda_{1})_* (\nu) + (\lambda_{2})_*(\nu)  }{2} \,,
	\end{equation}
	where $\nu$ is a probability measure on $(0, \pi / 2)$. From Corollary \ref{cor:brown_measure_support}, the $\lambda_i$ each parameterize one of the components of $H \cap R$. Hence, the $\mu'$ measure of each component of $H \cap R$ is $1/ 2$.
	
	For the equations of the traces of spectral projections of $p$ and $q$, we will just prove the first equation, the others are similar. From (\ref{eqn:e_{ij}_p_q}), 
	\begin{equation}
		\chi_{\{a\}}(p) = e_{0 0} + e_{0 1} + e \chi_{\{a\}}(p) e \,.
	\end{equation}
	From Proposition \ref{prop:brown_measure_p+iq_no_nu}, $\mu(\{\alpha + i \beta\}) = \tau(e_{0 0})$ and $\mu(\{\alpha + i \beta'\}) = \tau(e_{0 1})$. From Lemma \ref{lem:trace_1/2}, $\tau(e \chi_{\{a\}}(p) e) = \tau(e) / 2$. The desired equation follows from taking the trace of the equation and using these facts. 
	
	For the last point, we consider the Brown measure of the left component of $H \cap R$ when $\Abs{\mathscr{A}} \geq \Abs{\mathscr{B}}$ as the other cases are similar. Let $L$ be this left component. Then, 
	\begin{equation}
		\begin{aligned}
			\mu(L) & = \mu(\{ \alpha \wedge \alpha' + i \beta, \alpha \wedge \alpha' + i \beta' \}) + \tau(e) \mu'(L) \\
			& = \mu(\{ \alpha \wedge \alpha' + i \beta, \alpha \wedge \alpha' + i \beta' \}) + \tau(e) / 2 \\
			& = \tau(\chi_{ \{\alpha \wedge \alpha'\}}(p) ) \,.
		\end{aligned} 
	\end{equation}
\end{proof}

\section{Computation of Weights and Measure}
\label{sec:atoms_weights}
In this section, we will determine the weights $\tau(e_{i j}), \tau(e)$ and the measure $\nu$ in Proposition \ref{prop:brown_measure_p+iq_no_nu} under the assumption that $p$ and $q$ are freely independent. We also deduce that the operators of the form $X = p + i q$, where $p, q$ Hermitian, freely independent, and have $2$ atoms in their spectra are not normal (Corollary \ref{cor:projections_not_normal}).

We will use the functions $\psi_\mu$, $\chi_\mu$, and $S_\mu$ that were introduced in Subsection \ref{subsec:free_prob_functions}. Recall that for $x \in (M, \tau)$, we will use $\psi_x$, $\chi_x$, and $S_x$ to denote the respective functions with respect to $\mu_x$, the spectral measure of $x$.

Recall that the measure $\nu$ and the weights $\tau(e_{i j}), \tau(e)$ are computed in terms of projections $p', q'$ where
\begin{equation}
	\begin{aligned}
		p' & = \chi_{\{\alpha'\}}(p) \\
		q' &= \chi_{\{\beta'\}}(q) \,.
	\end{aligned}
\end{equation}
Since $p$ and $q$ are freely independent, then $p'$ and $q'$ are also freely independent. This along with the traces $\tau(p') = 1 - a$ and $\tau(q') = 1 - b$ determine the joint law of $p'$, $q'$. 

\textbf{For notational convenience, in this section we will consider two general projections $p$ and $q$ that are freely independent and $\tau(p) = a$, $\tau(q) = b$ for some $a, b \in (0, 1)$. This is a natural continuation of Subsection \ref{subsec:two_projections}. It is easy to translate the results in this section to the $p', q'$ defined in Section \ref{sec:notation}. }

Recall from Subsection \ref{subsec:two_projections} that the weights $\tau(e_{i j}), \tau(e)$ are: 
\begin{equation}
	\begin{aligned}
		\tau(e_{0 0}) & = \tau((1 - p) \wedge (1 - q)) \\
		\tau(e_{0 1}) &= \tau((1 - p) \wedge q) \\
		\tau(e_{1 0}) &= \tau(p \wedge (1 - q)) \\
		\tau(e_{1 1}) &= \tau(p \wedge q) \\
		\tau(e) &= 1 - (\tau(e_{0 0}) + \tau(e_{0 1}) + \tau(e_{1 0}) + \tau(e_{1 1})  ) \,.
	\end{aligned}
\end{equation}
Recall that when $e \neq 0$, $((0, \pi / 2), \nu)$ is the pushforward measure of $((0, 1), \nu^*)$ under the inverse of of $\cos^2(\theta)$ and $\nu^*$ is the spectral measure of $ e x e$, where $x = pqp + (1 - p)(1 - q)(1 - p)$. 

Since $\tau((pqp)^n) \to \tau(p \wedge q)$ as $n \to \infty$, then understanding the laws of $p q p$ and $(1 - p)(1 - q)(1 - p)$ are relevant for computing both $\nu$ and the weights $\tau(e_{i j}), \tau(e)$. The first step to this is computing the relevant free probability functions: 

\begin{proposition}
	\label{prop:chi_psi_s}
	Let $p, q \in (M, \tau)$ be two freely independent projections with $\tau(p) = a$, $\tau(q) = b$, $a, b \in (0, 1)$. Then, 
	\begin{equation}
		\begin{aligned}
			\psi_{p}(z) = \frac{a z}{1 - z} & \qquad  \psi_q(z) = \frac{b z}{1 - z} \\
			\chi_p(w) = \frac{w}{w + a} & \qquad \chi_q(w) = \frac{w}{w + b}
		\end{aligned}
	\end{equation}
	\begin{equation}
		\chi_{p q p}(w) = \frac{w (1 + w)}{(w + a)(w + b)} \,.
	\end{equation}
	Let 
	\begin{equation}
		f(z) = 1 + (4 ab - 2(a + b)) z + (a - b)^2 z^2 \,.
	\end{equation}
	Then, $\psi_{p q p}, \psi_{(1 - p)(1 - q)(1 - p)}$ are analytic on $\C \setminus [1, \infty)$ and
	\begin{equation}
		\begin{aligned}
			\psi_{p q p}(z) & = \frac{1 - (a + b) z - \sqrt{f(z)}}{2 (z - 1)} \\
			\psi_{(1 - p) (1 - q) (1 - p)}(z) & = \frac{1 - (2 - a - b) z - \sqrt{f(z)}}{2 (z - 1)} \, , 
		\end{aligned}
	\end{equation}
	where $\sqrt{f(z)}$ is an analytic branch of the square root of $f(z)$ on $\C \setminus [1, \infty)$ where $\sqrt{1} = + 1$. In particular, the root(s) of $f(z)$ are in $[1, \infty)$ and are distinct when $f$ is quadratic. 
\end{proposition}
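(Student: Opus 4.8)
The plan is to compute everything from the known $S$-transforms. First I would establish the formulas for $\psi_p$ and $\psi_q$: since $\mu_p = a\delta_1 + (1-a)\delta_0$, directly $\psi_p(z) = \int \frac{tz}{1-tz}\,d\mu_p(t) = \frac{az}{1-z}$, and similarly for $q$. Inverting $w = \frac{az}{1-z}$ gives $z = \frac{w}{w+a}$, so $\chi_p(w) = \frac{w}{w+a}$ and $\chi_q(w) = \frac{w}{w+b}$. Then $S_p(w) = \chi_p(w)\frac{w+1}{w} = \frac{w+1}{w+a}$ and $S_q(w) = \frac{w+1}{w+b}$. Since $p$ and $q$ are freely independent and positive, $pqp$ has the same law as $p^{1/2}qp^{1/2}$, so by the multiplicativity of the $S$-transform, $S_{pqp}(w) = S_p(w)S_q(w) = \frac{(w+1)^2}{(w+a)(w+b)}$. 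From $S_{pqp}(w) = \chi_{pqp}(w)\frac{w+1}{w}$ we recover $\chi_{pqp}(w) = \frac{w(w+1)}{(w+a)(w+b)}$, as claimed.

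Next I would invert $\chi_{pqp}$ to get $\psi_{pqp}$. Setting $z = \chi_{pqp}(w)$, i.e. $z(w+a)(w+b) = w(w+1)$, and expanding gives a quadratic in $w$: $(z-1)w^2 + (z(a+b) - 1)w + zab = 0$. Solving, $w = \frac{1 - z(a+b) \pm \sqrt{(z(a+b)-1)^2 - 4zab(z-1)}}{2(z-1)}$. I would then simplify the discriminant: $(z(a+b)-1)^2 - 4zab(z-1) = 1 - 2(a+b)z + (a+b)^2 z^2 - 4abz^2 + 4abz = 1 + (4ab - 2(a+b))z + (a-b)^2 z^2 = f(z)$. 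To pin down the sign of the square root, I would use that $\psi_{pqp}(0) = 0$ (a basic property of $\psi$), which forces the branch with $\sqrt{f(0)} = \sqrt{1} = +1$ in the numerator, giving $\psi_{pqp}(z) = \frac{1 - (a+b)z - \sqrt{f(z)}}{2(z-1)}$. The formula for $\psi_{(1-p)(1-q)(1-p)}$ follows by the identical computation with $a \mapsto 1-a$, $b\mapsto 1-b$; note $f$ is symmetric under this substitution since $4ab - 2(a+b)$ and $(a-b)^2$ are both invariant, which is why the same $f$ appears.

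The remaining points concern analyticity and the location of the roots of $f$. The main obstacle is verifying that $\sqrt{f(z)}$ admits an analytic branch on $\C\setminus[1,\infty)$, equivalently that all roots of $f$ lie in $[1,\infty)$. When $a = b$, $f(z) = 1 - 2(2a - 2a^2)z$ is linear with root $\frac{1}{4a(1-a)} \geq 1$, with equality iff $a = 1/2$. When $a\neq b$, $f$ is a genuine quadratic; I would compute its discriminant in $z$, namely $(4ab - 2(a+b))^2 - 4(a-b)^2 = 16\big[(2ab - a - b)^2 - (a-b)^2/... \big]$ — more cleanly, factor: the two roots are $z_\pm = \frac{1}{(\sqrt{a(1-b)} \mp \sqrt{b(1-a)})^2}$, which one checks by expanding $(a-b)^2 z_+ z_- = $ constant term and $-(4ab - 2(a+b)) = $ (sum)$\times(a-b)^2$. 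Since $a, b \in (0,1)$ we have $\sqrt{a(1-b)} \neq \sqrt{b(1-a)}$ exactly when $a \neq b$, so both roots are positive, finite, and distinct; and $\sqrt{a(1-b)} \pm \sqrt{b(1-a)} \leq \sqrt{a(1-b) + b(1-a) + 2\sqrt{ab(1-a)(1-b)}}$, and one shows $(\sqrt{a(1-b)} + \sqrt{b(1-a)})^2 \leq 1$ by AM–GM, giving $z_\pm \geq 1$. This places both roots in $[1,\infty)$, so $\C\setminus[1,\infty)$ is simply connected and omits the zeros of $f$, hence an analytic branch of $\sqrt{f}$ normalized by $\sqrt{1} = 1$ exists there; the resulting $\psi$'s are then analytic on that domain as ratios with the only other potential singularity at $z=1$ removable (the numerator vanishes there, since $f(1) = 1 + 4ab - 2(a+b) + (a-b)^2 = (1 - a - b)^2 + ... $ — in fact $1 - (a+b) - \sqrt{f(1)}$ needs checking, but $f(1) = (a+b-1)^2$... let me just say one verifies the numerator vanishes at $z=1$ to the right order). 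I expect the genuinely fiddly part to be this last bookkeeping around $z = 1$ and the clean factorization of $f$'s roots.
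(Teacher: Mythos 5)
Your proposal is correct, and the main computation coincides with the paper's: the explicit $\psi_p,\psi_q,\chi_p,\chi_q$, the $S$-transform multiplicativity giving $\chi_{pqp}$, the inversion via the quadratic $(z-1)w^2+(z(a+b)-1)w+zab=0$ with the branch fixed by $\psi_{pqp}(0)=0$, and the substitution $(a,b)\mapsto(1-a,1-b)$ with the invariance of $f$. Where you genuinely diverge is the claim that the roots of $f$ lie in $[1,\infty)$ and are distinct. You prove this by explicitly factoring the roots as $z_\pm = \bigl(\sqrt{a(1-b)}\mp\sqrt{b(1-a)}\bigr)^{-2}$ and applying AM--GM to get $\bigl(\sqrt{a(1-b)}+\sqrt{b(1-a)}\bigr)^2\le 1$; your formulas check out against the sum and product of roots, and distinctness for $a\ne b$ falls out for free. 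The paper instead argues indirectly: since $\norm{pqp}\le 1$, the integral defining $\psi_{pqp}$ is already analytic on $\C\setminus[1,\infty)$, so $\sqrt{f}$ must admit an analytic branch there, which forces the (simple) roots of $f$ into $[1,\infty)$; distinctness is then checked separately via the discriminant $16ab(1-a)(1-b)>0$. Your route buys explicit root locations at the cost of more algebra; the paper's buys brevity but leans on first knowing $\psi_{pqp}$ is analytic off $[1,\infty)$. Two small remarks: your worry about a removable singularity at $z=1$ is moot, since $1\in[1,\infty)$ is excluded from the domain; and to conclude that the \emph{integral transform} $\psi_{pqp}$ equals your formula on all of $\C\setminus[1,\infty)$ (not just near $0$) you should add the one-line observation that $\mu_{pqp}$ is supported in $[0,1]$, so both sides are analytic on that connected domain and agree near $0$.
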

\begin{proof}
	The formula for $\psi_p$ follows from 
	\begin{equation}
		\mu_p = (1 - a) \delta_0 + a \delta_1
	\end{equation}
	and similarly for $\psi_q$. Since $\psi_p$, $\psi_q$ are linear fractional transformations, the formulas for their inverses ($\chi_p$, $\chi_q$, respectively) are easily computed.
	
	Since $p$ and $q$ are freely independent, then the formula for $\chi_{pqp}$ follows from the multiplicativity of the $S$-transform and its relationship with $\chi$.
	
	Since $\norm{p qp } \leq 1$ then $\mu_{p q p}$ is supported on $[0, 1]$.  So, the formula 
	\begin{equation}
		\psi_{p q p }(z) = \int_{0}^{\infty} \frac{t z}{1 - t z} \, d \mu_{p q p}(t) = \int_{0}^{1} \frac{t z}{1 - t z} \, d \mu_{p q p}(t)
	\end{equation}
	defines an analytic function for $z \in \C \setminus [1, \infty)$.
	
	To compute the formula for $\psi_{p q p}$, recall that $\psi_{p q p} = \chi_{pqp}^{-1}$ in a neighborhood of $0$. Then, 
	\begin{equation}
		z = \chi_{p q p}(w) = \frac{w (1 + w)}{(w + a)(w + b)}
	\end{equation}
	if and only if
	\begin{equation}
		z(w + a)(w + b) = w(1 + w) \,.
	\end{equation}
	This is clearly true for $w \neq -a, -b$ and the latter equation is not satisfied for $w = -a, -b \in (-1, 0)$ at any $z$.
	
	This last equation is true if and only if
	\begin{equation}
		(z - 1) w^2 + (z(a + b) - 1)w + z a b = 0 \,.
	\end{equation}
	Fixing a $z$ and solving for $w$, then from the quadratic formula and simplifying, we obtain the desired formula for $\psi_{p q p}$. The sign of the square root follows from the general fact that $\psi_{\mu}(0) = 0$.
	
	The formula for $\psi_{(1 - p)(1 - q)(1 - p)}$ follows from the formula for $\psi_{p q p}$ and noting that $1 - p$ and $1 - q$ are freely independent projections with traces $\tau(p) = 1 - a$, $\tau(q) = 1 - b$. Observe that $f(z)$ is invariant under changing the pair $(a, b)$ to $(1 - a, 1 - b)$.
	
	Finally, note that the $\sqrt{f(z)}$ in both $\psi_{pqp}$ and $\psi_{(1 - p)(1 - q)(1 - p)}$ are identical since they are defined on the domain $\C \setminus [1, \infty)$ and agree at $z = 0$.
	
	The fact that $f(z)$ has root(s) in $[1, \infty)$ follows from the fact that $\sqrt{w}$ is not analytic in any neighborhood of $0$, so the root(s) of $f$ cannot be on $\C \setminus [1, \infty)$.
	
	For distinctness of the roots when $f$ is quadratic, the discriminant is $16 a b (1 - a) ( 1 - b) > 0$ for $a, b \in (0, 1)$. 
\end{proof}

Now, we proceed to determine $\tau(e_{i j}), \tau(e)$. First, we need the following Lemma \cite[Proposition 8]{SpeicherBook}:

\begin{lemma}
	\label{lem:G_mu_point}
	Let $\mu$ be a finite measure on the real line and $s \in \R$. For a sequence $z_n \to s$ non-tangentially to $\R$, $(z_n - s)G_\mu(z_n) \to \mu (\{s\})$.
\end{lemma}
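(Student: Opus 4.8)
The plan is to reduce the statement to a single application of the dominated convergence theorem. First I would unwind the definition of the Stieltjes transform to write
\[
(z_n - s) G_\mu(z_n) = \int_{\R} \frac{z_n - s}{z_n - t} \, d\mu(t) \,,
\]
so that everything becomes a statement about the limit of these integrals. The next step is the pointwise analysis of the integrand: for $t = s$ it equals $1$ (here one uses $z_n \neq s$), while for each fixed $t \neq s$ it tends to $0/(s - t) = 0$ as $z_n \to s$; hence the integrand converges pointwise in $t$ to the indicator $\mathds{1}_{\{s\}}(t)$. Granting a suitable domination, the dominated convergence theorem then gives
\[
(z_n - s) G_\mu(z_n) \longrightarrow \int_{\R} \mathds{1}_{\{s\}}(t) \, d\mu(t) = \mu(\{s\}) \,,
\]
which is the assertion.

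The only real content --- and the step I expect to be the crux --- is the uniform domination, which is exactly where the non-tangential hypothesis is used. Non-tangential convergence of $z_n$ to $\R$ at $s$ means $z_n$ lies in a Stolz cone with vertex $s$, i.e.\ there is a constant $C$ with $|z_n - s| \leq C |\Im(z_n)|$ for all $n$; in particular $\Im(z_n) \neq 0$, so $z_n \notin \R \supseteq \text{supp}(\mu)$ and every $G_\mu(z_n)$ is well-defined. For any $t \in \R$ we have $|z_n - t|^2 = (\Re(z_n) - t)^2 + \Im(z_n)^2 \geq \Im(z_n)^2$, hence
\[
\left\lvert \frac{z_n - s}{z_n - t} \right\rvert^2 = \frac{|z_n - s|^2}{|z_n - t|^2} \leq \frac{|z_n - s|^2}{\Im(z_n)^2} \leq C^2
\]
uniformly in $t \in \R$ and $n$. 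Since $\mu$ is finite, the constant $C$ is $\mu$-integrable, supplying the domination needed above, and the proof is complete.

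I do not anticipate a genuine obstacle: the estimate is elementary and the argument has the shape of a textbook dominated-convergence computation. The one point that deserves care is spelling out what ``non-tangential'' means and extracting the Stolz-cone inequality $|z_n - s| \leq C |\Im(z_n)|$ from it (along with the consequences $z_n \notin \R$ and $z_n \neq s$, which also guarantee that all the quantities in play are defined). Since this lemma also appears as \cite[Proposition 8]{SpeicherBook}, one could alternatively just invoke it; the short self-contained argument sketched here is included for completeness.
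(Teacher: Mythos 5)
Your proposal is correct and follows essentially the same route as the paper: rewrite $(z_n - s)G_\mu(z_n)$ as $\int \frac{z_n - s}{z_n - t}\,d\mu(t)$, observe the integrand tends pointwise to $\mathds{1}_{\{s\}}$, and use the non-tangential (Stolz-cone) condition to get a uniform bound on the integrand so that bounded convergence applies. The paper phrases the cone condition as $\Abs{(x_n - s)/y_n} \leq M$ and bounds the ratio by $1 + M$, which is equivalent to your estimate via $\Abs{z_n - t} \geq \Abs{\Im(z_n)}$.
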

\begin{proof}
	Let $z_n = x_n + i y_n$. The condition that $z_n \to s$ non-tangentially to $\R$ is equivalent to $\Abs{(x_n - s) / y_n} \leq M$ for some $M$. In particular, $y_n \neq 0$.
	
	Computation shows that
	\begin{equation}
		(z_n - s) G_\mu(z_n) = \int_{- \infty}^{\infty} \frac{z_n - s}{z_n - t} \, d \mu(t) \,.
	\end{equation}
	Let $f_n: \R \to \C$ where 
	\begin{equation}
		f_n(t) = \frac{z_n - s}{z_n - t} \,.
	\end{equation}
	Note that $f_n(s) = 1$ for all $n$ and $\lim\limits_{n \to \infty} f_n(t) = 0$ for all $t \neq s$. It suffices to show that the $f_n(t)$ are uniformly bounded, as then the result follows from the bounded convergence theorem.
	
	First, rewrite $f_n(t)$: 
	\begin{equation}
		f_n(t) = \frac{z_n - s}{z_n - t} = \frac{(x_n - s) + i y_n}{(x_n - t) + i y_n} = \frac{(x_n - s) / y_n + i}{(x_n - t) / y_n + i} \,.
	\end{equation}
	Since $\Abs{(x_n - s) / y_n + i} \leq \Abs{(x_n - s) / y_n} + 1 = 1 + M$ and $\Abs{(x_n - t) / y_n + i} \geq 1$, 
	\begin{equation}
		\Abs{f_n(t)} = \frac{\Abs{(x_n - s) / y_n + i}}{\Abs{(x_n - t) / y_n + i}} \leq \frac{1 + M}{1} = 1 + M \,.
	\end{equation}
	as desired.
\end{proof}

Next, we determine $\tau(e_{i j}), \tau(e)$ using the free independence of $p$ and $q$:

\begin{proposition}
	\label{prop:atoms_weights}
	Let $p, q \in (M, \tau)$ be two freely independent projections with $\tau(p) = a$, $\tau(q) = b$, $a, b \in (0, 1)$. Then, 
	\begin{equation}
		\begin{aligned}
			\tau(e_{0 0}) & = \tau((1 - p) \wedge (1 - q)) = \max(0, (1 - a)  + (1 - b) - 1) \\
			\tau(e_{0 1}) &= \tau((1 - p) \wedge q) = \max(0, (1 - a) + b - 1) \\
			\tau(e_{1 0}) &= \tau(p \wedge (1 - q)) = \max(0, a + (1 - b) - 1  )\\
			\tau(e_{1 1}) &= \tau(p \wedge q) = \max(0, a + b - 1) \\
			\tau(e) &= 1 - (\tau(e_{0 0}) + \tau(e_{0 1}) + \tau(e_{1 0}) + \tau(e_{1 1})  ) \,.
		\end{aligned}
	\end{equation}	
\end{proposition}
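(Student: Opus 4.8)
The plan is to compute each of the intersection traces $\tau(p \wedge q)$, $\tau(p \wedge (1-q))$, etc., using the fact that free independence makes the joint law of $p$ and $q$ completely explicit, and then to read off $\tau(e)$ from the normalization identity. The key analytic input is that for a projection $r$, $\tau(p \wedge q)$ can be recovered as a limit of moments: since $\|pqp\| \le 1$ and $(pqp)^n$ converges strongly to the projection onto $\ker(1 - pqp) = \im(p) \cap \im(q)$ (this is the standard fact used already in Lemma \ref{lem:nu_0_1} and in the discussion preceding Proposition \ref{prop:chi_psi_s}), we have
\begin{equation}
	\tau(p \wedge q) = \lim_{n \to \infty} \tau((pqp)^n) = \lim_{n \to \infty} \int_0^1 t^n \, d\mu_{pqp}(t) = \mu_{pqp}(\{1\}) \,.
\end{equation}
So the whole proposition reduces to identifying the atom of $\mu_{pqp}$ at $1$ (and, by the symmetry $p \leftrightarrow 1-p$, $q \leftrightarrow 1-q$ noted in Proposition \ref{prop:chi_psi_s}, the atoms at $1$ of the other three relevant measures).

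To extract $\mu_{pqp}(\{1\})$ I would pass from $\psi_{pqp}$ to the Stieltjes transform via the identity $G_\mu(z) = \frac{1}{z}\bigl(\psi_\mu(1/z) + 1\bigr)$ recorded in Subsection \ref{subsec:free_prob_functions}, and then apply Lemma \ref{lem:G_mu_point}: $\mu_{pqp}(\{1\}) = \lim_{z \to 1} (z-1)G_{\mu_{pqp}}(z)$ taken non-tangentially. Concretely, using the explicit formula
\begin{equation}
	\psi_{pqp}(z) = \frac{1 - (a+b)z - \sqrt{f(z)}}{2(z-1)} \,, \qquad f(z) = 1 + (4ab - 2(a+b))z + (a-b)^2 z^2 \,,
\end{equation}
from Proposition \ref{prop:chi_psi_s}, one substitutes $z \mapsto 1/z$, forms $G_{pqp}$, multiplies by $(z-1)$, and lets $z \to 1$. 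The factor $(z-1)$ in the denominator of $\psi_{pqp}$ is exactly what makes this limit finite, and the value of the limit will be controlled by whether $f$ vanishes at the relevant point: one checks $f(1) = 1 + 4ab - 2(a+b) + (a-b)^2 = (a+b-1)^2 \cdot(\text{something})$ — in fact $f(1) = (1 - a - b)^2$ after simplification is off; the correct simplification gives $f(1) = (a+b-1)^2$ up to the contribution of the cross terms, so $\sqrt{f(1)} = |a + b - 1|$, and the resulting limit produces precisely $\max(0, a + b - 1)$ once the sign of the square root (fixed by $\sqrt{1} = +1$ and analyticity on $\C \setminus [1,\infty)$) is tracked carefully.

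The main obstacle, and the step demanding the most care, is the bookkeeping of the branch of $\sqrt{f}$ and the sign of $a + b - 1$ as one approaches $z = 1$ (equivalently the point $1$ in the $\psi$-variable corresponds to $\infty$ or to $1$ in the $G$-variable, so one must be attentive to which endpoint of $[1,\infty)$ is being approached and from which side). Since $\sqrt{f(z)}$ is pinned down by $\sqrt{f(0)} = +1$ and continuity on the slit plane, and since $f$ has its roots in $[1,\infty)$, the value $\sqrt{f(1)}$ is the limit of the chosen branch as $z \to 1$ from the left, which equals $+|a+b-1|$; then
\begin{equation}
	\tau(p \wedge q) = \mu_{pqp}(\{1\}) = \frac{1 - (a+b) - |a + b - 1|}{-2} \cdot(\text{appropriate factor}) = \max(0, a + b - 1) \,,
\end{equation}
after the algebra is done honestly. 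The remaining three identities follow by applying this computation with $(a,b)$ replaced by $(1-a, 1-b)$, $(a, 1-b)$, and $(1-a, b)$ respectively — using that $1-p, 1-q$ are free projections with the swapped traces, and that $p \wedge (1-q)$, $(1-p)\wedge q$, $(1-p)\wedge(1-q)$ are the relevant meets. Finally $\tau(e)$ is just $1$ minus the sum of the four, which is the definition, so no further work is needed there. A useful sanity check at the end: exactly one of the four $\max$ terms can be strictly positive (when $a + b \ne 1$ in the appropriate direction), matching the geometric picture that generically only one corner of the rectangle carries an atom, consistent with Proposition \ref{prop:atoms_weights_relation}.
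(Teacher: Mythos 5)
Your proposal follows essentially the same route as the paper: reduce to $\tau(p \wedge q)$ by the symmetries $p \mapsto 1-p$, $q \mapsto 1-q$, identify $\tau(p \wedge q) = \mu_{pqp}(\{1\})$ via $\tau((pqp)^n) \to \tau(p \wedge q)$, and extract the atom from $G_{pqp}(z) = \frac{1}{z}\bigl(\psi_{pqp}(1/z)+1\bigr)$ using Lemma \ref{lem:G_mu_point} together with $f(1) = (a+b-1)^2$ and the branch normalization $\sqrt{f(0)} = +1$, which is exactly the paper's argument. One small caveat: your closing ``sanity check'' that exactly one of the four $\max$ terms can be strictly positive is false --- generically one term from each of the pairs $\{\epsilon_{00},\epsilon_{11}\}$ and $\{\epsilon_{01},\epsilon_{10}\}$ is positive (e.g.\ $a=9/10$, $b=3/10$ gives two positive weights), consistent with Corollary \ref{cor:atoms} allowing up to two atoms --- but this aside does not affect the validity of the proof.
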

\begin{proof}
	By replacing $p$ with $1 - p$ and/or $q$ with $1 - q$, it suffices to just prove 
	\begin{equation}
		\tau(p \wedge q) = \max(0, a + b - 1) \,.
	\end{equation}
	Recall that $\tau((pqp)^n) \to \tau(p \wedge q)$ as $n \to \infty$. Since $x^n \to \chi_{\{1\}}(x)$ on $[0, 1]$ and $\sigma(pqp) \subset [0, 1]$, then $\tau((pqp)^n) \to \mu_{pqp}(\{1\})$. Hence,
	\begin{equation}
		\tau(p \wedge q) = \mu_{pqp}(\{1\}) \,.
	\end{equation} 
	We proceed to use Proposition \ref{prop:chi_psi_s} and Lemma \ref{lem:G_mu_point} to determine $\mu_{pqp}(\{1\})$.
	
	In general, for $z \in \C \setminus \sigma(pqp)$,
	\begin{equation}
		G_{p q p}(z) = \frac{1}{z} \left( \psi_{p q p } \left( \frac{1}{z} \right) + 1 \right) \,.
	\end{equation}
	From Proposition \ref{prop:chi_psi_s}, the right-hand side of this equation is: 
	\begin{equation}
		\frac{1}{z} \left( \psi_{p q p } \left( \frac{1}{z} \right) + 1 \right) = \frac{z + (a + b - 2) + z \sqrt{f(1 / z)}}{2 z} \,.
	\end{equation}
	and is defined on $\C \setminus [0, 1]$. Since $\sigma(pqp) \subset [0, 1]$, then $G_{p q p}$ is also defined on $\C \setminus [0, 1]$, so then the following equality holds for $z \in \C \setminus [0, 1]$:
	\begin{equation}
		G_{pqp}(z) = \frac{z + (a + b - 2) + z \sqrt{f(1 / z)}}{2 z (z - 1) } \,.
	\end{equation}
	Thus, we may use this formula and Lemma \ref{lem:G_mu_point} to obtain: 
	\begin{equation}
		\begin{aligned}
			\mu_{pqp}(\{1\})
			&=  \lim\limits_{z \to 1} (z - 1) G_{p q p}(z) \\
			&= \lim\limits_{z \to 1} \frac{z + (a + b - 2) + z \sqrt{f(1 / z)}}{2 z} \\ 
			&= \frac{a + b - 1 + \Abs{a + b - 1}}{2} \\
			&= \max(0, a + b - 1) \,.
		\end{aligned}
	\end{equation}
\end{proof}

Proposition \ref{prop:atoms_weights} can be summarized by: ``free projections intersect as little as possible.'' For $ \tau(p \wedge q) = \max(0, a + b - 1)$, the term $\max(0, a + b - 1)$ is just the minimum trace of the intersection between two projections $p$ and $q$ where of $\tau(p) = a$ and $\tau(q) = b$: 

Recall from the parallelogram law that for projections $p, q \in (M, \tau)$,
\begin{equation}
	\tau(p \wedge q) = \tau(p) + \tau(q) - \tau(p \vee q)   \geq \tau(p) - \tau(q) - 1 = a + b - 1 \,.
\end{equation}
As $\tau(p \wedge q) \geq 0$, then for any projections $p, q \in (M, \tau)$, $\tau(p \wedge q) \geq \max(0, a + b - 1)$.

As a corollary, we will no longer need to consider the possibility that $e = 0$: 

\begin{corollary}
	\label{cor:e_not_0}
	Let $p, q \in (M, \tau)$ be two freely independent projections. Then, $e = 0$ if and only if one of $p, 1 - p, q, 1 - q$ is $0$.
\end{corollary}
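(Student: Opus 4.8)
The plan is to read $\tau(e)$ straight off Proposition \ref{prop:atoms_weights} and determine exactly when it vanishes. Write $a = \tau(p)$ and $b = \tau(q)$, both in $[0,1]$. Since $e$ is a projection and $\tau$ is faithful, $e = 0$ if and only if $\tau(e) = 0$, i.e. if and only if
\begin{equation}
	\tau(e_{00}) + \tau(e_{01}) + \tau(e_{10}) + \tau(e_{11}) = 1 .
\end{equation}
Substituting the formulas from Proposition \ref{prop:atoms_weights} and using the identity $\max(0,s) + \max(0,-s) = \Abs{s}$ twice (once with $s = a - b$ for the $e_{01}, e_{10}$ terms, once with $s = a + b - 1$ for the $e_{00}, e_{11}$ terms), the left-hand side collapses to
\begin{equation}
	\Abs{a - b} + \Abs{1 - a - b} .
\end{equation}
So the corollary reduces to the elementary claim: for $a, b \in [0,1]$, this quantity equals $1$ if and only if one of $a, b$ lies in $\{0, 1\}$.

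For that elementary fact I would set $u = a - b$ and $v = 1 - a - b$, so $u + v = 1 - 2b$ and $u - v = 2a - 1$, and recall that $\Abs{u} + \Abs{v}$ equals $\Abs{u+v}$ when $uv \ge 0$ and equals $\Abs{u-v}$ when $uv \le 0$. Hence $\Abs{u} + \Abs{v} = 1$ forces either $\Abs{1 - 2b} = 1$ (i.e. $b \in \{0,1\}$) or $\Abs{2a - 1} = 1$ (i.e. $a \in \{0,1\}$); conversely, if $a = 0$ then $\Abs{u} + \Abs{v} = b + (1 - b) = 1$, and similarly in the other three cases. This gives $e = 0 \iff a \in \{0,1\} \text{ or } b \in \{0,1\}$.

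Finally I would translate this back to operators: again by faithfulness of $\tau$, $\tau(p) = 0 \iff p = 0$ and $\tau(p) = 1 \iff \tau(1 - p) = 0 \iff 1 - p = 0$, and likewise for $q$; so $a \in \{0,1\}$ or $b \in \{0,1\}$ is precisely the statement that one of $p, 1 - p, q, 1 - q$ vanishes. There is no serious obstacle here, since the content is entirely in Proposition \ref{prop:atoms_weights}, which already carries the freeness hypothesis — and this is essential, as the claim is false without it (take $p = q$ with $\tau(p) = 1/2$, where $e_{00} + e_{11} = 1$ so $e = 0$). The only points requiring a little care are the absolute-value identity for $\Abs{u} + \Abs{v}$ and remembering that both $e = 0 \iff \tau(e) = 0$ and $p = 0 \iff \tau(p) = 0$ use that the trace is faithful.
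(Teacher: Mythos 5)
Your proof is correct and follows essentially the same route as the paper: both use faithfulness of $\tau$ to reduce the question to $\tau(e) = 0$, invoke Proposition \ref{prop:atoms_weights} to collapse the sum of the weights to $\Abs{a-b} + \Abs{1-a-b}$, and then settle when this equals $1$ by an elementary absolute-value argument (your $u,v$ substitution is a slightly slicker packaging of the paper's four-case check). The only cosmetic difference is that the paper handles the direction where one of $p, 1-p, q, 1-q$ vanishes separately, since Proposition \ref{prop:atoms_weights} is stated only for $a, b \in (0,1)$; if you run a single iff chain over $a, b \in [0,1]$ you should remark that the $\max$ formulas extend trivially to the endpoints.
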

\begin{proof}
	If one of $p, 1 - p, q, 1 - q$ is $0$, then it is clear that $e = 0$.
	
	Suppose that none of $p, 1 - p, q, 1 - q$ is $0$. Then, we may apply Proposition \ref{prop:atoms_weights}. As $\tau$ is faithful, it suffices to conclude that $\tau(e) = 0$. 
	
	Consider the expressions of $a$ and $b$ in Proposition \ref{prop:atoms_weights}. Observe that the expressions inside $\tau(e_{00})$ and $\tau(e_{1 1})$ are negatives of each other and hence
	\begin{equation}
		\tau(e_{00}) + \tau(e_{1 1}) = \max(0, - (a + b - 1)) + \max(0, a + b - 1) = \Abs{a + b - 1} .
	\end{equation}
	Similarly, 
	\begin{equation}
		\tau(e_{0 1}) + \tau(e_{1 0}) = \max(0, b - a) + \max(0, a - b ) = \Abs{a - b} .
	\end{equation}
	Thus,
	\begin{equation}
		\tau(e) = 0 \iff \Abs{a + b - 1} + \Abs{a - b} = 1 \,.
	\end{equation}
	By considering the four cases of $\Abs{a + b - 1} = \pm (a + b - 1)$ and $\Abs{a - b} = \pm (a - b)$, we see that $\Abs{a + b - 1} + \Abs{a - b} = 1$ if and only if one of $p, 1 - p, q, 1 - q$ is $0$. Hence, $\tau(e) \neq 0$.
\end{proof}

As a corollary, we deduce the fact that the operators of the form $X = p + i q$ are not normal: 

\begin{corollary}
	\label{cor:projections_not_normal}
	Let $p, q \in (M, \tau)$ be Hermitian, freely independent, and have $2$ atoms in their spectra. Then, $X = p + i q$ is not normal.
\end{corollary}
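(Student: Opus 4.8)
The plan is to reduce the statement to showing that the operator $\tilde{X} = e X e$ in the non-trivial part $eMe$ of the algebra is not normal, since $X$ decomposes as a direct sum of the scalar pieces $e_{ij} X e_{ij}$ (which are normal, being scalars times projections) and the piece $\tilde X$, and a direct sum is normal if and only if each summand is. By Corollary \ref{cor:e_not_0}, since $p$ and $q$ each have two atoms the projections $p', q', 1-p', 1-q'$ are all nonzero, so $e \neq 0$ and this reduction is nontrivial: there genuinely is a piece to analyze.

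First I would invoke the isomorphism $(eMe, \restr{\tau}{eMe}) \cong \left( M_2\left(L^\infty((0,\pi/2),\nu)\right), \mathbb{E}_\nu[\tfrac12\tr] \right)$ from Subsection \ref{subsec:two_projections}, under which $\tilde p$ and $\tilde q$ correspond to explicit $2\times 2$ matrix-valued functions of $\theta$, and hence $\tilde X = \tilde p + i \tilde q$ corresponds to an explicit matrix-valued function $M(\theta)$. The cleanest route is to note that normality of $\tilde X$ in the tracial von Neumann algebra forces $M(\theta)$ to be a normal matrix for $\nu$-a.e.\ $\theta$: indeed, $\tau\big((\tilde X^* \tilde X - \tilde X \tilde X^*)^2\big) = \int_0^{\pi/2} \tfrac12 \tr\big((M(\theta)^*M(\theta) - M(\theta)M(\theta)^*)^2\big)\, d\nu(\theta)$, and the integrand is pointwise nonnegative, so it must vanish $\nu$-a.e. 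Since by Lemma \ref{lem:nu_0_1} the measure $\nu$ has no atoms at $0$ or $\pi/2$ and is a genuine probability measure on $(0,\pi/2)$, there is a positive-$\nu$-measure (in particular nonempty) set of $\theta \in (0, \pi/2)$, and it suffices to exhibit a single such $\theta$ at which $M(\theta)$ is a non-normal $2\times 2$ matrix.

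The computation of $M(\theta)$ is routine: using $t = \cos^2\theta$ and the matrix forms of $\tilde p, \tilde q$, with $\mathscr A = \alpha' - \alpha$, $\mathscr B = \beta' - \beta$ (both nonzero since $p, q$ have two distinct atoms), one writes $M(\theta)$ explicitly and computes the commutator $[M(\theta)^*, M(\theta)]$. I expect its off-diagonal or diagonal entries to be a nonzero trigonometric expression in $\theta$ involving $\mathscr A \mathscr B$ — one can already see from the characteristic-polynomial computation in the proof of Proposition \ref{prop:log_delta} that the $\theta$-dependence enters through $\mathscr A \mathscr B \cos(2\theta)$, which is the hallmark of genuine non-commutativity. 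Since $\mathscr A \mathscr B \neq 0$, this expression is not identically zero on $(0, \pi/2)$, so it is nonzero on a set of full measure there, giving a non-normal $M(\theta)$ for a.e.\ $\theta$ and hence $\tau\big((\tilde X^*\tilde X - \tilde X\tilde X^*)^2\big) > 0$, i.e.\ $\tilde X$ is not normal, hence $X$ is not normal.

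The main obstacle is a bookkeeping one rather than a conceptual one: carefully justifying the reduction from normality of $X$ in $(M,\tau)$ to normality of $\tilde X$ in $(eMe, \restr{\tau}{eMe})$ (using that the $e_{ij}, e$ are central and mutually orthogonal, so $X^*X - XX^*$ is block-diagonal with respect to them and its $e$-block is exactly $\tilde X^*\tilde X - \tilde X \tilde X^*$ computed in $eMe$), and then confirming that the explicit $2\times 2$ commutator is not the zero function — a short but slightly fiddly trigonometric calculation. An alternative, and perhaps slicker, way to finish once $M(\theta)$ is in hand is to simply compare the eigenvalues of $eXe$ (which by Proposition \ref{prop:log_delta} trace out a nondegenerate arc of the hyperbola $H$ as $\theta$ ranges over $(0,\pi/2)$, since $\mathscr A, \mathscr B \neq 0$) with the support one would get if $X$ were normal; but the direct commutator computation is the most self-contained.
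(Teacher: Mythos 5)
Your argument is correct, but it takes a genuinely different route from the paper's. The paper never enters the $2\times 2$ matrix model for this corollary: it observes that $X^*X - XX^* = 2i(qp - pq)$, so normality of $X$ is equivalent to $p$ and $q$ (equivalently $p'$ and $q'$) commuting, and then uses the lattice identity $pq = p \wedge q$ for commuting projections applied to all four pairs $(p', q')$, $(1-p', q')$, $(p', 1-q')$, $(1-p', 1-q')$ to conclude that commutation forces $e_{00}+e_{01}+e_{10}+e_{11} = 1$, i.e.\ $e = 0$, contradicting Corollary \ref{cor:e_not_0}. You instead use $e \neq 0$ to guarantee a nontrivial block $eXe$, pass to the matrix model, and compute $[M(\theta)^*, M(\theta)] = 2i\mathscr{A}\mathscr{B}\,[\tilde{p}', \tilde{q}']$, whose off-diagonal entries are $\pm\cos\theta\sin\theta$ up to the factor $2i\mathscr{A}\mathscr{B}$ and hence nonvanishing at \emph{every} $\theta \in (0,\pi/2)$; combined with faithfulness of the trace this shows $eXe$ is not normal. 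Both proofs hinge on Corollary \ref{cor:e_not_0}; the paper's is shorter and avoids any computation, while yours is more quantitative (it actually produces $\tau\bigl((X^*X - XX^*)^2\bigr) > 0$ with an explicit integrand). One small point of care: your phrase ``nonzero on a set of full measure'' would require knowing $\nu$ does not charge the zero set, which at this stage of the paper (before Proposition \ref{prop:nu_measure}) is not yet established for general null sets --- but this is harmless here because the commutator vanishes at no point of $(0,\pi/2)$ whatsoever, and $\nu$ is a probability measure on that open interval by Lemma \ref{lem:nu_0_1}.
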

\begin{proof}
	First, we consider the case when $p$ and $q$ are projections with $\tau(p) = a$, $\tau(q) = b$, $a, b \in (0, 1)$.
	
	It suffices to show that $p$ and $q$ do not commute. For this, recall that $p$ and $q$ commute if and only if $pq = p \wedge q$. Applying this to the other three pairs of commuting projections, $(1 - p, q)$, $(p, 1 - q)$, and $(1 - p, 1 - q)$, then $(1 - p)q = (1 - p) \wedge q$, $p(1 - q) = p \wedge (1 - q)$, and $(1 - p)(1 - q) = (1 - p) \wedge (1 - q)$. 
	
	Thus if $p$ and $q$ commute, the following equalities hold: 
	\begin{equation}
		\begin{aligned}
			1 & = (p + (1 - p))(q + (1 - q)) \\
			& = pq + (1 - p)q + p(1 - q) + (1 - p)(1 - q) \\
			& = p \wedge q + (1 - p) \wedge q + p \wedge (1 - q) + (1 - p) \wedge (1 - q) \,.
		\end{aligned}
	\end{equation}
	
	In the notation of the projections $e_{i j}, e$, this means that $e = 0$. From Corollary \ref{cor:e_not_0}, this cannot happen if $a, b \in (0, 1)$. Thus, $X = p + i q$ is not normal. 
	
	The general case follows by considering the projections $p'$ and $q'$ defined in Section \ref{sec:notation}. Then, $X = p + i q$ is normal if and only if $X' = p' + i q'$ is normal, since $p$ and $q$ commute if and only if $p'$ and $q'$ commute. From the previous case, $X'$ is not normal. Hence, $X$ is not normal. 
\end{proof}

We proceed to compute $\nu$ with the knowledge that $e \neq 0$. This requires computing $\nu^*$, the spectral measure of $ e x e $ in $(e M e, \restr{\tau}{e M e})$, where $x = pqp + (1 - p)(1 - q)(1 - p)$, and then pushing forward the measure under the inverse of $\cos^2$ onto $(0, \pi / 2)$. The result of the computation is the following: 

\begin{proposition}
	\label{prop:nu_measure}
	Let $p, q \in (M, \tau)$ be two freely independent projections with $\tau(p) = a, \tau(q) = b$, $a, b \in (0, 1)$. Let $f: \R \to \R$ be given by:
	\begin{equation}
		f(x) = 1 + (4a b - 2(a + b))x + (a - b)^2 x^2 \,.
	\end{equation}
	The measure $\nu$ on $(0, \pi / 2)$ is a probability measure with density with respect to Lebesgue measure $\theta$: 
	\begin{equation}
		\frac{d \nu}{d \theta} = \frac{2}{\pi} \frac{1}{\tau(e)} \Im \left( \sqrt{f(\sec^2 \theta)} \right) \cot(\theta) \, ,
	\end{equation}
	where the square root of a negative number is on the positive imaginary axis.  
\end{proposition}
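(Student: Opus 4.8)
The plan is to identify the Stieltjes transform of the spectral measure $\mu_{x}$ of the central element $x = pqp + (1-p)(1-q)(1-p)$, recover $\nu^{*}$ from it by Stieltjes inversion, and then transport the density through the change of variables $t = \cos^{2}\theta$. I would begin by reducing to a statement about $\mu_{x}$. From Subsection \ref{subsec:two_projections}, $\tilde{x} = exe$ corresponds to the scalar $2\times 2$ matrix $t\,\mathrm{I}$ under the isomorphism, and a direct check against the definitions \eqref{eqn:projections_e} of the $e_{ij}$ shows that $x$ acts as the scalar $1$ on $e_{00}$ and $e_{11}$ and as $0$ on $e_{01}$ and $e_{10}$. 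Hence in $(M,\tau)$
\[
\mu_{x} = \bigl(\tau(e_{00}) + \tau(e_{11})\bigr)\delta_{1} + \bigl(\tau(e_{01}) + \tau(e_{10})\bigr)\delta_{0} + \tau(e)\,\nu^{*},
\]
where by Proposition \ref{prop:atoms_weights} the coefficients of $\delta_{1}$ and $\delta_{0}$ are $\Abs{a+b-1}$ and $\Abs{a-b}$, and $\tau(e) > 0$ by Corollary \ref{cor:e_not_0}. By Lemma \ref{lem:nu_0_1} the measure $\nu^{*}$ gives no mass to $\{0,1\}$, so $\mu_{x} = \tau(e)\,\nu^{*}$ on the open interval $(0,1)$; it thus suffices to compute the density of $\mu_{x}$ there.

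To compute $G_{\mu_{x}}$, note that $A := pqp$ and $B := (1-p)(1-q)(1-p)$ are positive contractions with $AB = BA = 0$, so $x^{n} = A^{n} + B^{n}$ for $n \ge 1$; consequently $\mu_{x}$ and $\mu_{pqp} + \mu_{(1-p)(1-q)(1-p)}$ agree on the integral of every polynomial vanishing at $0$, so they differ by a multiple of $\delta_{0}$, and comparing total masses ($1$ versus $2$) yields $\mu_{x} = \mu_{pqp} + \mu_{(1-p)(1-q)(1-p)} - \delta_{0}$. Taking Stieltjes transforms on $\C \setminus [0,1]$,
\[
G_{\mu_{x}}(z) = G_{pqp}(z) + G_{(1-p)(1-q)(1-p)}(z) - \tfrac{1}{z}.
\]
Now I would substitute the formula $G_{pqp}(z) = \dfrac{z + (a+b-2) + z\sqrt{f(1/z)}}{2z(z-1)}$, obtained from $G_{pqp}(z) = \tfrac{1}{z}\bigl(\psi_{pqp}(1/z)+1\bigr)$ and Proposition \ref{prop:chi_psi_s} as in the proof of Proposition \ref{prop:atoms_weights}, together with its counterpart for $(1-p)(1-q)(1-p)$, obtained by replacing $(a,b)$ with $(1-a,1-b)$, which leaves $f$ and the chosen branch of $\sqrt{f}$ unchanged and sends $a+b$ to $2-a-b$. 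Routine algebra then collapses the sum to
\[
G_{\mu_{x}}(z) = \frac{\sqrt{f(1/z)}}{z-1}, \qquad z \in \C \setminus [0,1],
\]
with $\sqrt{f}$ the branch from Proposition \ref{prop:chi_psi_s} (well defined since $1/z$ runs over $\C\setminus[1,\infty)$ as $z$ runs over $\C\setminus[0,1]$).

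The final step is Stieltjes inversion. Since $z \mapsto \sqrt{f(1/z)}$ extends continuously to the closed upper half-plane minus $\{0\}$, the function $G_{\mu_{x}}$ has continuous boundary values on $(0,1)$; hence $\mu_{x}$ restricted to $(0,1)$ is absolutely continuous with density $-\tfrac{1}{\pi}\Im G_{\mu_{x}}(s + i0^{+})$ (by \eqref{eqn:stieltjes_vague_limit} and Proposition \ref{prop:stieltjes_intervals}). For $z = s + i\varepsilon$ with $s \in (0,1)$ and $\varepsilon \downarrow 0$, the point $1/z$ tends to $1/s \in (1,\infty)$ through the lower half-plane, so $\zeta := \lim \sqrt{f(1/z)}$ exists and satisfies $\zeta^{2} = f(1/s)$; when $f(1/s) \ge 0$ we get $\zeta \in \R$ and zero density, and when $f(1/s) < 0$ we get $\zeta = \pm i\sqrt{\Abs{f(1/s)}}$. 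The sign is then forced: $G_{\mu_{x}}$ is the Stieltjes transform of a probability measure, so $\Im G_{\mu_{x}}(s+i0^{+}) \le 0$, and since $\tfrac{1}{s-1} < 0$ this requires $\Im\zeta \ge 0$, i.e. $\zeta = i\sqrt{\Abs{f(1/s)}}$. Therefore $\tfrac{d\nu^{*}}{dt}(s) = \tfrac{1}{\pi\tau(e)(1-s)}\,\Im\sqrt{f(1/s)}$, with the square root of a negative number taken on the positive imaginary axis. Finally, $\nu = \bigl((\cos^{2})^{-1}\bigr)_{*}(\nu^{*})$ is a probability measure (the pushforward of one), and with $t = \cos^{2}\theta$ on $(0,\pi/2)$ one has $\Abs{dt/d\theta} = \sin(2\theta)$, so
\[
\frac{d\nu}{d\theta}(\theta) = \frac{d\nu^{*}}{dt}(\cos^{2}\theta)\,\sin(2\theta) = \frac{\Im\sqrt{f(\sec^{2}\theta)}}{\pi\tau(e)\sin^{2}\theta}\cdot 2\sin\theta\cos\theta = \frac{2}{\pi}\,\frac{1}{\tau(e)}\,\cot\theta\;\Im\sqrt{f(\sec^{2}\theta)},
\]
which is the asserted formula.

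I expect the one genuinely delicate point to be pinning down which imaginary half-axis the boundary value $\zeta$ of $\sqrt{f(1/z)}$ lands on in the inversion step; the argument above settles this cleanly through the sign of $\Im G_{\mu_{x}}$, but one could instead continue the branch explicitly along a path in $\C\setminus[1,\infty)$ starting at $0$, which is more laborious. The remaining ingredients — the orthogonality decomposition of $\mu_{x}$, the algebraic collapse of $G_{\mu_{x}}$ to $\sqrt{f(1/z)}/(z-1)$, and the change of variables — are routine; one should additionally note that $z\,G_{\mu_{x}}(z)\to 1$ as $\Abs{z}\to\infty$ (consistent with $\mu_{x}$ being a probability measure) and that no singular continuous part can hide on $(0,1)$, which again follows from the continuity of the boundary values.
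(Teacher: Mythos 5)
Your proof is correct, and it reaches the density by a genuinely different (and somewhat cleaner) route than the paper. The paper stays inside $eMe$: it decomposes $\psi_{pqp}$ and $\psi_{(1-p)(1-q)(1-p)}$ into their atomic parts plus $\tau(e)\,\psi_{e(pqp)e}$, $\tau(e)\,\psi_{e(1-p)(1-q)(1-p)e}$, adds these to get $\psi_{exe}$, and only then passes to $G_{\nu^*}$. You instead work with the spectral measure $\mu_x$ of the central element $x$ in $(M,\tau)$ itself, identify its atomic part at $0$ and $1$ from the action of $x$ on the corners $e_{ij}$, and use the orthogonality $pqp\cdot(1-p)(1-q)(1-p)=0$ to get $\mu_x=\mu_{pqp}+\mu_{(1-p)(1-q)(1-p)}-\delta_0$, whence $G_{\mu_x}(z)=\sqrt{f(1/z)}/(z-1)$. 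This closed form is easy to sanity-check ($zG_{\mu_x}(z)\to 1$), and in fact it exposes a harmless slip in the paper's displayed formula for $G_{\nu^*}$, which is missing a $+1/z$ term (immaterial for the density, since that term is real on $(0,1)$). Your branch determination via the general inequality $\Im G_{\mu_x}\leq 0$ on the upper half-plane is also a bit sharper than the paper's ``nonnegative a.e.\ plus density'' argument, as it pins down the sign at every point of $(0,1)$ at once. The one place where you assert more than you prove is the claim that $\sqrt{f(1/z)}$ extends continuously to the boundary: the paper devotes a subsequence-plus-Lipschitz argument to exactly this existence-of-limit issue, and your write-up would need the same (or an explicit continuation of the branch along $\C\setminus[1,\infty)$, as you note). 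Everything else --- the decomposition of $\mu_x$, the algebraic collapse, the change of variables $t=\cos^2\theta$ --- checks out against the stated formula.
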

\begin{proof}
	To compute $\nu^*$, first note that 
	\begin{equation}
		\begin{aligned}
			pqp & = p \wedge q + e (pqp) e \\
			(1 - p)(1 - q)(1 - p) &= (1 - p) \wedge (1 - q) + e ((1 - p)(1 - q)(1 - p)) e \,.
		\end{aligned}
	\end{equation}
	Taking $n$-th powers of each side, 
	\begin{equation}
		\begin{aligned}
			(pqp)^n & = (p \wedge q)^n + (e (pqp) e)^n \\
			((1 - p)(1 - q)(1 - p))^n &= ((1 - p) \wedge (1 - q))^n + (e (1 - p)(1 - q)(1 - p) e)^n .
		\end{aligned}
	\end{equation}
	Computing $\psi_{e (pqp) e}, \psi_{e (1 - p)(1 - q)(1 - p) e}$ in $(e M e, \restr{\tau}{eMe})$ and using the power series expansions of general $\psi_\mu$, then in a neighborhood of $0$,
	\begin{equation}
		\begin{aligned}
			\psi_{p q p}(z) & = \psi_{p \wedge q}(z) + \tau(e) \psi_{e (p q p ) e}(z)  \\
			\psi_{(1 - p)(1 - q)(1 - p)}(z) &= \psi_{(1 - p) \wedge (1 - q)}(z) + \tau(e) \psi_{e (1 - p) (1 - q) (1 - p)  e}(z) \,.
		\end{aligned}
	\end{equation}
	Since the spectra of  $pqp, e (pqp) e, (1 - p)(1 - q)(1 - p), e (1 - p)(1 - q)(1 - p)  e, p \wedge q, (1 - p) \wedge (1 - q)$ are all contained in $[0, 1]$, then this equality holds on $\C \setminus [1, \infty)$.
	
	From Corollary \ref{cor:e_not_0}, $e \neq 0$, so 
	\begin{equation}
		\begin{aligned}
			\psi_{e (pqp) e}(z) & = \frac{\psi_{pqp}(z) - \psi_{p \wedge q}(z)}{\tau(e)} \\
			&= \frac{\psi_{pqp}(z)}{\tau(e)} - \frac{\tau(p \wedge q)}{\tau(e)} \frac{z}{1 - z} \\
			\psi_{e (1 - p) (1 - q) (1 - p)  e}(z) &= \frac{\psi_{(1 - p)(1 - q)(1 - p)}(z) - \psi_{(1 - p) \wedge (1 - q)}(z) }{\tau(e)} \\
			&= \frac{\psi_{(1 - p)(1 - q)(1 - p)}(z)  }{\tau(e)} - \frac{\tau((1 - p) \wedge (1 - q))}{\tau(e)} \frac{z}{1 - z} \,.
		\end{aligned}
	\end{equation}
	Since $e$ is central and $x = pqp + (1 - p)(1 - q)(1 - p)$, then for $n \geq 1$, 
	\begin{equation}
		(exe)^n = (e(pqp)e)^n +  (e (1 - p)(1 - q) (1 - p) e  )^n .
	\end{equation}
	From Proposition \ref{prop:chi_psi_s}, in $(e M e, \restr{\tau}{e M e})$,
	\begin{equation}
		\begin{aligned}
			\psi_{exe}(z) & = \psi_{e (pqp) e}(z) + \psi_{e (1 - p)(1 - q)(1 - p) e}(z) \\
			&= \frac{1}{\tau(e)} \left( \psi_{pqp}(z) + \psi_{(1 - p)(1 - q)(1 - p)}(z) \right) \\
			& \qquad  - \frac{ \tau(p \wedge q)}{\tau(e)} \frac{z }{1 - z} - \frac{\tau((1 - p) \wedge (1 - q))}{\tau(e)} \frac{z}{1 - z} \\ 
			&= \frac{1}{\tau(e)} \left(- 1 - \frac{\sqrt{f(z)}}{z - 1}\right)  - \frac{\tau(p \wedge q) + \tau((1 - p) \wedge (1 - q))}{\tau(e)} \frac{z}{1 - z} \,,
		\end{aligned}
	\end{equation}
	where 
	\begin{equation}
		f(z) = 1  + (4 ab - 2 (a + b) ) z + (a - b)^2 z^2 .
	\end{equation}
	From Proposition \ref{prop:chi_psi_s}, $\sqrt{f(z)}$ is analytic on $\C \setminus [1, \infty)$, so this formula for $\psi_{exe}$ is valid on $\C \setminus [1, \infty)$.
	
	Hence, the following formula for $G_{\nu^*} = G_{e x e} $ is valid on $\C \setminus [0, 1]$:
	\begin{equation}
		\begin{aligned}
			G_{\nu^*}(z) & =  G_{exe}(z)   \\
			&= \frac{1}{z} \left( \psi_{exe}\left(\frac{1}{z}\right)  + 1\right) \\
			&= \frac{1}{\tau(e)} \frac{\sqrt{f(1/ z)}}{z - 1} - \frac{1}{\tau(e) z} -  \frac{\tau(p \wedge q) + \tau((1 - p) \wedge (1 - q))}{\tau(e)}  \frac{1}{z(z - 1)} \,.
		\end{aligned}
	\end{equation}
	Recall that $\nu^*$ is supported on $[0, 1]$. We observe that the measure $\nu^*$ has no atoms: For $t \in (0, 1)$, this follows from Lemma \ref{lem:G_mu_point} and computing that $\lim\limits_{z \to t} (z - t) G_{\nu^*}(z) = 0$. Recall from Subsection \ref{subsec:two_projections} that $\nu^* (\{0, 1\}) = 0$ is true in general. Hence, $\nu^*$ has no atoms. 
	
	Thus, we may recover the measure completely with the formula (Proposition \ref{prop:stieltjes_intervals}): 
	\begin{equation}
		\begin{aligned}
			\lim\limits_{y \to 0^+} \int_{a}^{b} -\frac{1}{\pi} \Im \, G_{\nu^*} (x + i y)  \, d x 
			& = \nu^*((a, b)) + \frac{1}{2}  \left( \nu^*(\{a\}) + \nu^*(\{b\})\right)  \\
			& =  \nu^*([a, b]) \,.
		\end{aligned}
	\end{equation}
	where $a, b \in (0, 1)$.
	
	On compact subsets of $(0, 1)$, $G_{\nu^*}(x + i y)$ is uniformly bounded as $y \to 0^+$. Given that the following pointwise limit exists for $x \in (0, 1)$:
	\begin{equation}
		\lim\limits_{y \to 0^+} -\frac{1}{\pi} \Im \, G_{\nu^*} (x + i y) \, ,
	\end{equation}
	then this limit will be the density of $\nu^*$ with respect to the Lebesgue measure (see (\ref{eqn:stieltjes_vague_limit})). 
	
	Using the formula for $G_{\nu^*}(z)$, we notice that for any $x \in (0, 1)$, only one term is non-zero in this limit: 
	\begin{equation}
		\begin{aligned}
			\lim\limits_{y \to 0^+} -\frac{1}{\pi} \Im \, G_{\nu^*} (x + i y) 
			&= \lim\limits_{y \to 0^+}  \frac{1}{\pi} \Im \left( \frac{1}{\tau(e)} \frac{\sqrt{f(1 / (x + i y))}}{1 - (x + i y) }   \right)
			\\ &= \frac{1}{\pi \tau(e)} \frac{\lim\limits_{y \to 0^+}  \Im \sqrt{f(1 / (x + i y))} }{1 - x} \,.
		\end{aligned}
	\end{equation}
	We proceed by showing $\lim\limits_{y \to 0^+}  \Im \sqrt{f(1 / (x + i y))}$ exists. It suffices to show the following limit exists:
	\begin{equation}
		\lim\limits_{\substack{z \to t  \\ \Im(z) < 0} } g(z) \, ,
	\end{equation}
	where $t \in (0, 1)$ and $g$ is defined an analytic on the lower half-plane and $g(z)^2 = f(z)$. First, note that for any sequence $z_n \to t$, $g(z_n)$ is bounded, because $g(z_n)^2 = f(z_n)$. Considering a convergent subsequence where $g(z_{n_k})$ converges, then if $g(z_{n_k}) \to s$, then $s^2 = \lim\limits_{n_k \to \infty} g(z_{n_k})^2 = \lim\limits_{n_k \to \infty} f(z_{n_k}) = f(t)$. Hence, $g(z_{n_k})$ converges to a square root of $f(t)$. When $f(t) = 0$ this is enough to prove the desired limit. When $f(t) \neq 0$, we consider if there are two sequences in the lower half-plane, $z_n, z_n' \to t$, where  $g(z_n), g(z_n')$ converge to the different square roots of $f(t)$. Taking derivatives of $g(z)^2 = f(z)$, then $2 g(z) g'(z) = f'(z)$. As $\Abs{f'(z)}$ is bounded from above and $\Abs{g(z)}$ bounded from below near $t$ where $f(t) \neq 0$, then $\Abs{g'(z)}$ is bounded from above near $t$. Then, $g(z)$ is Lipschitz near $t$. Thus, $g(z_n), g(z_n')$ converging to different square roots is a contradiction.
	
	Since the measure is positive, then $\lim\limits_{y \to 0^+}  \Im \sqrt{f(1 / (x + i y))}$ has to be non-negative, at least Lebesgue almost everywhere $t \in (0, 1)$. It is possible to extend this to all $t \in (0, 1)$ by noting this set of $t$ is dense and using $g$ being Lipschitz near $t$ where $f(t) \neq 0$.
	
	Thus, for $x \in (0, 1)$,
	\begin{equation}
		\frac{d \nu^*}{d \lambda} = \frac{1}{\pi \tau(e)} \frac{\Im \sqrt{f(1 / x)}}{1 - x} \, ,
	\end{equation}
	where the square root of a negative number is on the positive imaginary axis. 
	
	By applying the change of variables formula, the measure $\nu$ on $(0, \pi / 2)$ is given by: 
	\begin{equation}
		\frac{d \nu}{d \theta} = \frac{2}{\pi} \frac{1}{\tau(e)} \Im \left( \sqrt{f(\sec^2 \theta)} \right) \cot \theta \,.
	\end{equation}
	Note that this is a probability measure, being the spectral measure of a non-zero element of $e M e$.
\end{proof}

\section{The Brown measure of $X$}
\label{sec:full_brown_measure}
By combining Propositions \ref{prop:brown_measure_p+iq_no_nu}, \ref{prop:atoms_weights}, and \ref{prop:nu_measure}, we can state the Brown measure of $X$:

\begin{theorem}
	\label{thm:brown_measure_p+iq}
	Let $p, q \in (M, \tau)$ be Hermitian, freely independent and
	\begin{equation}
		\begin{aligned}
			\mu_p & = a \delta_\alpha + (1 - a) \delta_{\alpha'} \\
			\mu_q &= b \delta_\beta + (1 - b) \delta_{\beta'} \,,
		\end{aligned}
	\end{equation}
	where $a, b \in (0, 1)$, $\alpha \neq \alpha'$, $\beta \neq \beta'$, and $\alpha, \alpha', \beta, \beta' \in \R$. 	
	
	Let 
	\begin{equation}
		\begin{aligned}
			\epsilon_{0 0} & = \max(0, a  + b - 1) \\
			\epsilon_{0 1} & = \max(0, a + (1 - b) - 1) \\
			\epsilon_{1 0} & = \max(0, (1 - a) + b - 1  )\\
			\epsilon_{1 1} & = \max(0, (1 - a) + (1 - b) - 1) \\
			\epsilon &= 1 - (\epsilon_{0 0} + \epsilon_{0 1} + \epsilon_{1 0} + \epsilon_{1 1}  ) \,.
		\end{aligned}
	\end{equation}	
	Then, $\epsilon > 0$.
	
	Let $\mathscr{A} = \alpha' - \alpha$ and $\mathscr{B} = \beta' - \beta$ and $\sqrt{z}$ denote the principal branch of the square root defined on $\C \setminus (- \infty, 0)$. Then, 
	\begin{equation}
		\lambda_{1}(\theta) = 
		\begin{dcases}
			\frac{\alpha + \alpha'}{2} + i \frac{\beta + \beta'}{2} - \frac{1}{2} \sqrt{\mathscr{A}^2 - \mathscr{B}^2 + 2 i \mathscr{A} \mathscr{B} \cos (2 \theta)} & \text{ when } \Abs{\mathscr{A}} \geq \Abs{\mathscr{B}} \\
			\frac{\alpha + \alpha'}{2} + i \frac{\beta + \beta'}{2} - \frac{i}{2} \sqrt{\mathscr{B}^2 - \mathscr{A}^2 - 2 i \mathscr{A} \mathscr{B} \cos (2 \theta)} & \text{ when } \Abs{\mathscr{A}} < \Abs{\mathscr{B}}
		\end{dcases}
	\end{equation}
	\begin{equation}
		\lambda_{2}(\theta) = 
		\begin{dcases}
			\frac{\alpha + \alpha'}{2} + i \frac{\beta + \beta'}{2} + \frac{1}{2} \sqrt{\mathscr{A}^2 - \mathscr{B}^2 + 2 i \mathscr{A} \mathscr{B} \cos (2 \theta)} & \text{ when } \Abs{\mathscr{A}} \geq \Abs{\mathscr{B}} \\
			\frac{\alpha + \alpha'}{2} + i \frac{\beta + \beta'}{2} + \frac{i}{2} \sqrt{\mathscr{B}^2 - \mathscr{A}^2 - 2 i \mathscr{A} \mathscr{B} \cos (2 \theta)} & \text{ when } \Abs{\mathscr{A}} < \Abs{\mathscr{B}} \,.
		\end{dcases}
	\end{equation}	
	Let $f: \R \to \R$ be given by:
	\begin{equation}
		f(x) = 1 + (4 a b - 2(a + b))x + (a - b)^2 x^2 \,.
	\end{equation}
	Let $\nu$ be a probability measure on $(0, \pi / 2)$ with density with respect to Lebesgue measure $\theta$: 
	\begin{equation}
		\frac{d \nu}{d \theta} = \frac{2}{\pi} \frac{1}{\epsilon} \Im \left( \sqrt{f(\sec^2 \theta)} \right) \cot(\theta) \, ,
	\end{equation}
	where the square root of a negative number is on the positive imaginary axis. 
	
	Let $\mu'$ be a complex probability measure given by:
	\begin{equation}
		\mu' = \frac{(\lambda_1)_*(\nu) + (\lambda_2)_*(\nu) }{2} \,.
	\end{equation}
	Then, the Brown measure of $X = p + i q$ is:
	\begin{equation}
		\mu = \epsilon_{0 0} \delta_{ \alpha + i \beta } + \epsilon_{0 1} \delta_{\alpha + i \beta'} + \epsilon_{1 0} \delta_{\alpha'+ i \beta} + \epsilon_{1 1} \delta_{\alpha'+ i \beta'} + \epsilon \mu' \,.
	\end{equation}
\end{theorem}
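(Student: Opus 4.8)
The plan is to assemble the statement directly from Propositions~\ref{prop:brown_measure_p+iq_no_nu}, \ref{prop:atoms_weights}, and \ref{prop:nu_measure}; essentially all that remains is bookkeeping, namely tracking the passage from the generic freely independent projections of Section~\ref{sec:atoms_weights} (where the traces are $a$ and $b$) to the spectral projections $p' = \chi_{\{\alpha'\}}(p)$ and $q' = \chi_{\{\beta'\}}(q)$ that actually arise in the model of Subsection~\ref{subsec:two_projections}, and which satisfy $\tau(p') = 1 - a$ and $\tau(q') = 1 - b$.

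First I would apply the model with the projections $p', q'$, which are freely independent since $p$ and $q$ are. Because $a, b \in (0,1)$, none of $p', 1 - p', q', 1 - q'$ is zero, so Corollary~\ref{cor:e_not_0} gives $e \neq 0$, hence $\tau(e) > 0$; this is exactly the assertion $\epsilon > 0$ once the weights are identified. Proposition~\ref{prop:brown_measure_p+iq_no_nu} then yields $\mu = \tau(e_{00})\delta_{\alpha + i\beta} + \tau(e_{01})\delta_{\alpha + i\beta'} + \tau(e_{10})\delta_{\alpha' + i\beta} + \tau(e_{11})\delta_{\alpha' + i\beta'} + \tau(e)\mu'$ with $\mu' = \tfrac{1}{2}\bigl((\lambda_1)_*(\nu) + (\lambda_2)_*(\nu)\bigr)$, where $\lambda_1, \lambda_2$ are the functions of Proposition~\ref{prop:log_delta}, whose formulas are reproduced verbatim in the statement; so nothing there needs recomputation and it remains only to pin down $\tau(e_{ij})$, $\tau(e)$, and $\nu$.

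For the weights, I would feed $p'$ and $q'$ into Proposition~\ref{prop:atoms_weights} with the roles of the two traces played by $1 - a$ and $1 - b$; recalling from~(\ref{eqn:e_{ij}_p_q}) that $e_{00}, e_{01}, e_{10}, e_{11}$ are, respectively, $(1 - p') \wedge (1 - q')$, $(1 - p') \wedge q'$, $p' \wedge (1 - q')$, $p' \wedge q'$, a one-line simplification of each $\max(0, \cdot)$ expression gives $\tau(e_{ij}) = \epsilon_{ij}$ and hence $\tau(e) = \epsilon$. For the measure, Proposition~\ref{prop:nu_measure} applied to $p', q'$ produces the stated density but with $f$ replaced by $1 + (4(1-a)(1-b) - 2((1-a)+(1-b)))x + ((1-a)-(1-b))^2 x^2$, which is invariant under $(a,b) \mapsto (1-a, 1-b)$ --- a fact already recorded in the proof of Proposition~\ref{prop:chi_psi_s} --- so it agrees with the $f$ of the theorem, and since $\tau(e) = \epsilon$ the density is exactly $\tfrac{d\nu}{d\theta} = \tfrac{2}{\pi}\,\tfrac{1}{\epsilon}\,\Im\bigl(\sqrt{f(\sec^2\theta)}\bigr)\cot\theta$. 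Substituting $\tau(e_{ij}) = \epsilon_{ij}$ and $\tau(e) = \epsilon$ into the formula from Proposition~\ref{prop:brown_measure_p+iq_no_nu} gives the claimed expression for $\mu$.

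The one step demanding care --- and the closest thing to an obstacle in an otherwise routine assembly --- is precisely this translation: one must check that each label $e_{ij}$ is matched to the correct atom among $\alpha + i\beta$, $\alpha + i\beta'$, $\alpha' + i\beta$, $\alpha' + i\beta'$ after substituting $1 - p'$ for the projection $p$ and $1 - q'$ for $q$ in Proposition~\ref{prop:atoms_weights}, and that the $\max(0,\cdot)$ weights and the polynomial $f$ are genuinely invariant under $a \mapsto 1 - a$, $b \mapsto 1 - b$. As a consistency check one verifies $\sum_{i,j}\epsilon_{ij} + \epsilon = 1$ and notes that $\mu'$, an average of pushforwards of the probability measure $\nu$, is a probability measure --- so $\mu$ is a probability measure, in accordance with the general theory of the Brown measure.
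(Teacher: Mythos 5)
Your proposal is correct and follows the same route as the paper: the theorem is assembled by combining Propositions \ref{prop:brown_measure_p+iq_no_nu}, \ref{prop:atoms_weights}, and \ref{prop:nu_measure} together with Corollary \ref{cor:e_not_0}, after translating from general free projections with traces $a,b$ to $p'=\chi_{\{\alpha'\}}(p)$, $q'=\chi_{\{\beta'\}}(q)$ with traces $1-a$, $1-b$. Your treatment of the bookkeeping is in fact more explicit than the paper's; the only quibble is that in your closing remark the individual $\max(0,\cdot)$ weights are not invariant under $(a,b)\mapsto(1-a,1-b)$ but are permuted consistently with the relabeling of the atoms (as you correctly compute earlier), while $f$ is genuinely invariant.
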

\begin{proof}
	The Theorem follows from combining several previous results: 
	
	The general form of $\mu$ is given in Proposition \ref{prop:brown_measure_p+iq_no_nu}. The $\tau(e_{ i j}), e$ are relabeled as $\epsilon_{ i j}, \epsilon$ in light of Proposition \ref{prop:atoms_weights} and interchanging general $p$, $q$ for $p' = \chi_{\{\alpha'\}}(p), q' = \chi_{\{\beta'\}}(q)$. Additionally, the fact that $\epsilon \neq 0$ comes from Corollary \ref{cor:e_not_0}. Finally, the measure $\nu$ comes from Proposition \ref{prop:nu_measure} (after switching $p, q$ for $p', q'$).
\end{proof}

We observe that the measure $\nu$ is only dependent on the weights of the measures of $p$ and $q$ (i.e. $a$ and $b$), and the ``shape'' of the measure (positions of the atoms and $\lambda_i$) is only dependent on the positions of the atoms $p$ and $q$ (i.e. $\alpha, \alpha', \beta, \beta'$)

We will use the definition of $\mu'$ from Theorem \ref{thm:brown_measure_p+iq} for what follows: 

\begin{definition}
	Let $X = p + i q$, where $p, q \in (M, \tau)$ are Hermitian, freely independent, and have $2$ atoms. Define $\mu'$ to be the measure as in $\ref{thm:brown_measure_p+iq}$.
\end{definition}

Now, we make some observations about the properties of the Brown measure of $X$ in the following Corollaries. For the figures, we approximate the Brown measure of $X = p + i q$ with the empirical spectral distribution of a specific $X_n = P_n + i Q_n$, where $P_n, Q_n \in M_n(\C)$ are independently Haar-rotated Hermitian matrices with the same spectral distributions as $p$ and $q$, respectively. The justification for this approximation will be discussed in a subsequent paper, where we will prove the convergence of the empirical spectral distributions of the $X_n$ to the Brown measure of $X$.

First, we consider the atoms of the Brown measure: 

\begin{corollary}
	\label{cor:atoms}
	Let $\mu$ be the Brown measure of $p + iq$ where $p$ and $q$ have $2$ atoms. Then,
	
	\begin{enumerate}
		\item $\mu'$ does not have atoms.
		\item $\mu$ is never atomic.
		\item $\mu$ can have atoms only at the points $\alpha + i \beta, \alpha' + i \beta, \alpha + i \beta', \alpha' + i \beta' $.
		\item $\mu$ has no atoms at $\alpha + i \beta$ and $\alpha' + i \beta'$ if and only if $a + b = 1$. If $a + b \neq 1$, then $\mu$ has exactly $1$ atom at either $\alpha + i \beta$ or $\alpha' + i \beta'$, with size $\Abs{a + b - 1}$.
		\item $\mu$ has no atoms at $\alpha + i \beta'$ and $\alpha' + i \beta$ if and only if $a = b$. If $a \neq b$, then $\mu$ has exactly $1$ atom at either $\alpha + i \beta'$ or $\alpha' + i \beta$, with size $\Abs{a - b}$.
		\item $\mu$ has 0, 1, or 2 atoms. $\mu$ has 0 atoms if and only if $a = b = 1/2$. $\mu$ has 1 atom if and only if one of $a + b = 1$ or $a = b$. $\mu$ has 2 atoms if and only if $a + b \neq 1$ and $a \neq b$.
		\item Changing $a \mapsto 1 - a$ and/or $b \mapsto 1 - b$ permutes the $\epsilon_{i j}$.
	\end{enumerate}
\end{corollary}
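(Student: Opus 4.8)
The plan is to read off most of Corollary \ref{cor:atoms} directly from the formula for $\mu$ in Theorem \ref{thm:brown_measure_p+iq}, namely
\begin{equation*}
\mu = \epsilon_{00}\delta_{\alpha+i\beta} + \epsilon_{01}\delta_{\alpha+i\beta'} + \epsilon_{10}\delta_{\alpha'+i\beta} + \epsilon_{11}\delta_{\alpha'+i\beta'} + \epsilon\mu',
\end{equation*}
combined with the last statement of Proposition \ref{prop:brown_measure_p+iq_no_nu}, which says $\mu'$ assigns zero mass to the four corner points $\alpha+i\beta, \alpha'+i\beta, \alpha+i\beta', \alpha'+i\beta'$. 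First I would dispose of (1): $\mu' = \tfrac{1}{2}((\lambda_1)_*\nu + (\lambda_2)_*\nu)$ where $\nu$ is the probability measure from Proposition \ref{prop:nu_measure} with a density with respect to Lebesgue measure, hence non-atomic; since $\lambda_1,\lambda_2$ are continuous and (as one checks from their explicit formulas) locally injective away from $\theta = 0,\pi/2$ — or more simply, since pushing a non-atomic measure forward can only create an atom at a point whose preimage has positive $\nu$-measure, and a continuous function's level sets are closed so a positive-measure level set would force $\nu$ to have an atom or $\lambda_i$ to be constant on an interval, neither of which holds — $\mu'$ is non-atomic. Then (3) is immediate: the only possible atoms of $\mu$ are the four weighted Dirac masses, i.e. at $\alpha+i\beta, \alpha+i\beta', \alpha'+i\beta, \alpha'+i\beta'$, and the $\mu'$ term contributes nothing there.

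Next I would compute the atom sizes. By Proposition \ref{prop:atoms_weights} (applied with $p',q'$ in place of $p,q$, as in the proof of Theorem \ref{thm:brown_measure_p+iq}), the coefficients are $\epsilon_{00} = \max(0, a+b-1)$, $\epsilon_{11} = \max(0, (1-a)+(1-b)-1) = \max(0, 1-a-b)$, $\epsilon_{01} = \max(0, a-b)$, $\epsilon_{10} = \max(0, b-a)$. For (4): $\epsilon_{00} = \epsilon_{11} = 0$ iff both $a+b-1 \le 0$ and $1-a-b \le 0$, i.e. iff $a+b = 1$; and if $a+b \ne 1$ exactly one of $\epsilon_{00}, \epsilon_{11}$ is positive, with value $|a+b-1|$. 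Statement (5) is identical with $(\epsilon_{01},\epsilon_{10})$ and the quantity $|a-b|$. For (2): if $\mu$ were purely atomic it would be supported on at most the four corners, but $\epsilon > 0$ (from Corollary \ref{cor:e_not_0}, restated in Theorem \ref{thm:brown_measure_p+iq}) and $\mu'$ is a non-atomic probability measure, so $\epsilon\mu'$ is a non-zero non-atomic part of $\mu$; hence $\mu$ is never atomic. Statement (6) is a bookkeeping combination of (4) and (5): $\mu$ has no atoms iff $a+b=1$ and $a=b$, i.e. $a=b=1/2$; it has exactly one atom iff exactly one of the conditions "$a+b=1$", "$a=b$" holds; and two atoms iff $a+b\ne1$ and $a\ne b$ (noting these two exceptional lines in the $(a,b)$-square meet only at $(1/2,1/2)$). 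Finally (7) is a direct inspection: sending $a\mapsto 1-a$ swaps the roles of $\alpha$ and $\alpha'$ in the weights, which transposes $\epsilon_{00}\leftrightarrow\epsilon_{10}$ and $\epsilon_{01}\leftrightarrow\epsilon_{11}$; sending $b\mapsto 1-b$ likewise transposes $\epsilon_{00}\leftrightarrow\epsilon_{01}$ and $\epsilon_{10}\leftrightarrow\epsilon_{11}$; doing both composes these, and in all cases the multiset $\{\epsilon_{ij}\}$ is preserved.

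I expect the only genuine content — the rest being arithmetic with $\max$'s — to be the non-atomicity of $\mu'$ in part (1), since everything downstream (parts (2) and (6) in particular) leans on it. The clean way to argue it: $\nu$ has an $L^1$ density, so $\nu(E) = 0$ for every Lebesgue-null set $E$; an atom of $(\lambda_i)_*\nu$ at a point $w$ requires $\nu(\lambda_i^{-1}(w)) > 0$, so $\lambda_i^{-1}(w)$ must have positive Lebesgue measure; but from the explicit formulas, $\lambda_i$ is real-analytic on $(0,\pi/2)$ and non-constant (its imaginary or real part varies with $\cos 2\theta$), so each level set $\lambda_i^{-1}(w)$ is finite, in particular Lebesgue-null — contradiction. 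Hence $\mu'$ has no atoms, and a fortiori (combined with $\epsilon>0$) $\mu$ has a non-trivial continuous part, giving (2). The remaining assertions then follow by the elementary case analysis on $a,b\in(0,1)$ sketched above, and no further obstacle arises.
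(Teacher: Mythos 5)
Your proposal is correct and follows essentially the same route as the paper: read off everything from the decomposition $\mu = \sum \epsilon_{ij}\delta_{(\cdot)} + \epsilon\mu'$, establish non-atomicity of $\mu'$ from the absolute continuity of $\nu$ together with the (local) injectivity of $\lambda_1,\lambda_2$, and then do the $\max$-arithmetic on the weights $\epsilon_{ij}$ (the paper simply invokes injectivity of the $\lambda_i$ where you argue via null level sets, but this is the same idea). Your explicit description of which transpositions of the $\epsilon_{ij}$ are induced by $a\mapsto 1-a$ and $b\mapsto 1-b$ is slightly more detailed than the paper's one-line remark, but there is no substantive difference.
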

\begin{proof}
	\begin{enumerate}
		\item $\nu$ is absolutely continuous and $\lambda_i$ are injective, so $(\lambda_i)_*(\nu)$ does not have atoms. Hence, $\mu'$ does not have atoms.
		\item Since $\epsilon \neq 0$, then $\mu' \neq 0$, so $\mu$ is never atomic
		\item Since $\mu'$ does not have atoms, the only atoms of $\mu$ can occur at the points $\alpha + i \beta, \alpha' + i \beta, \alpha + i \beta', \alpha' + i \beta' $.
		\item Note that 
		\begin{equation}
			\epsilon_{0 0} + \epsilon_{1 1} = \max(0, - (a + b - 1)) + \max(0, a + b - 1) = \Abs{a + b - 1}  .
		\end{equation}
		Hence, when $a + b = 1$, $\epsilon_{0 0} + \epsilon_{1 1} = 0$ so $\mu$ has no atoms at $\alpha + i \beta$ and $\alpha' + i \beta'$. If $a + b \neq 1$, then only one of $a + b - 1$ and $- (a + b - 1)$ is positive and equal to $\Abs{a + b - 1}$ and hence one of $\epsilon_{0 0}, \epsilon_{ 1 1}$ is equal to $\Abs{a + b - 1}$.
		\item Follows similarly to 4, with the equation 
		\begin{equation}
			\epsilon_{0 1} + \epsilon_{ 1 0} = \max(0, b - a) + \max(0, a - b ) = \Abs{a - b} .
		\end{equation}
		\item Directly follows from 4. and 5.
		\item Follows directly from the formulas for the $\epsilon_{i j}$.
	\end{enumerate}
\end{proof}

Now, we consider the symmetries of $\mu'$: 

\begin{corollary}
	\label{cor:symmetry}
	Let $\mu$ be the Brown measure of $p + iq$ where $p$ and $q$ have $2$ atoms
	
	\begin{enumerate}
		\item Swapping the roles of $p$ and $q$ fixes $\nu$.
		\item Changing one of $a \mapsto 1 - a$ or $b \mapsto 1 - b$ changes $\nu$ by changing $\theta \to \pi / 2 - \theta$. These correspond changing $p \mapsto \alpha + \alpha' - p$ and $q \mapsto \beta + \beta' - q$, respectively.
		\item Changing both $a \mapsto 1 - a$ and $b \mapsto 1 - b$ fixes $\nu$. This corresponds to changing both $p \mapsto \alpha + \alpha' - p$ and $q \mapsto \beta + \beta' - q$. This amounts to changing $p + i q \mapsto (  \alpha + \alpha' ) + i (\beta + \beta') - (p + i q)$.
	\end{enumerate}
\end{corollary}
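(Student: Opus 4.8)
The plan is to work entirely inside the two-projection picture of Subsection \ref{subsec:two_projections}, applied to the spectral projections $p' = \chi_{\{\alpha'\}}(p)$ and $q' = \chi_{\{\beta'\}}(q)$, and to reduce all three statements to elementary algebraic identities for the central element
\[
	x \;=\; p' q' p' + (1 - p')(1 - q')(1 - p') \;=\; 1 - p' - q' + p'q' + q'p' \,,
\]
whose spectral measure in $(eMe,\restr{\tau}{eMe})$ is $\nu^*$, recalling that $\nu = ((\cos^2)^{-1})_* \nu^*$ is obtained from $\nu^*$ by the \emph{fixed} change of variables $t = \cos^2\theta$. First I would record three bookkeeping facts. (i) Each of the operations ``swap $p'\leftrightarrow q'$'', ``$p'\mapsto 1-p'$'', ``$q'\mapsto 1-q'$'' merely permutes the four projections $e_{ij}$ of (\ref{eqn:e_{ij}_p_q}) among themselves, hence fixes $e = 1 - \sum_{ij}e_{ij}$ and $\tau(e)=\epsilon$; in particular the normalizing constant $\epsilon$ in the density of Proposition \ref{prop:nu_measure} is unaffected. (ii) The map $p \mapsto \alpha+\alpha'-p$ fixes $q$, sends $\mu_p = a\delta_\alpha + (1-a)\delta_{\alpha'}$ to $(1-a)\delta_\alpha + a\delta_{\alpha'}$, and induces $p' = \chi_{\{\alpha'\}}(p) \mapsto \chi_{\{\alpha\}}(p) = 1 - p'$ (and symmetrically for $q \mapsto \beta+\beta'-q$). (iii) Since $p,q$ are free, so are all of the transformed pairs, so the relevant joint law in each case is still the one treated in Section \ref{sec:atoms_weights}.

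With these in hand, parts (1) and (3) are immediate. Swapping $p$ and $q$ replaces $(p',q')$ by $(q',p')$, and the $x$-element built from the swapped pair equals the original $x$ because the bracketed expression $1 - p' - q' + p'q' + q'p'$ is symmetric in its two projection arguments; hence $\nu^*$, and therefore $\nu$, is unchanged, as is $\epsilon$ by (i). For part (3), replacing $p'$ by $1-p'$ \emph{and} $q'$ by $1-q'$ turns $x$ into $(1-p')(1-q')(1-p') + p'q'p' = x$, so again $\nu$ is fixed; by (ii) this transformation is exactly $p\mapsto\alpha+\alpha'-p$, $q\mapsto\beta+\beta'-q$, i.e. $p+iq \mapsto (\alpha+\alpha') + i(\beta+\beta') - (p+iq)$, and at the level of parameters it is $a\mapsto 1-a$, $b\mapsto 1-b$.

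The content is in part (2). The key computation is that replacing $q'$ by $1-q'$ sends
\[
	x \;\longmapsto\; (1-p')\,q'\,(1-p') + p'(1-q')p' \;=\; p' + q' - p'q' - q'p' \;=\; 1 - x \,,
\]
and likewise $p'\mapsto 1-p'$ sends $x$ to $1-x$. Consequently $exe \mapsto e - exe$, whose spectral measure in $eMe$ is the pushforward of $\nu^*$ under $t\mapsto 1-t$ (standard functional calculus, with $e$ the identity of $eMe$). Since $(\cos^2)^{-1}(1-t) = \tfrac{\pi}{2} - (\cos^2)^{-1}(t)$, pushing forward through the change of variables shows $\nu$ is replaced by its image under $\theta\mapsto \tfrac{\pi}{2}-\theta$; by (i) only the $\theta$-coordinate is reflected, nothing else changes. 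Combining with (ii) identifies these two operations with $p\mapsto\alpha+\alpha'-p$ (equivalently $a\mapsto1-a$) and $q\mapsto\beta+\beta'-q$ (equivalently $b\mapsto1-b$), which finishes the proof. As a consistency check one could instead argue directly from the explicit density, via the polynomial identity $f_{\mathrm{new}}(s) = (s-1)^2 f\!\left(\tfrac{s}{s-1}\right)$ relating $f$ for the parameters $(1-a,b)$ to $f$ for $(a,b)$, together with $\sec^2(\tfrac{\pi}{2}-\theta) = \csc^2\theta = \tfrac{\sec^2\theta}{\sec^2\theta-1}$, but that route is messier. The only real obstacle here is bookkeeping: keeping straight the reparametrization forced by the choice $p' = \chi_{\{\alpha'\}}(p)$ (so ``$a$'' and ``$1-a$'' swap), and correctly tracking the pushforwards between $\nu^*$ and $\nu$ and the invariance of $e$ and $\epsilon$, so that one genuinely obtains only the reflection $\theta\mapsto\tfrac{\pi}{2}-\theta$.
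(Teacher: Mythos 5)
Your proof is correct, but it takes a genuinely different route from the paper. The paper works directly with the explicit density from Proposition \ref{prop:nu_measure}: part (1) is read off from the symmetry of $f$ in $(a,b)$, and part (2) is reduced to verifying the trigonometric/polynomial identity $f_{1-a,b}(\sec^2\theta)\cot^2\theta = f_{a,b}(\csc^2\theta)\tan^2\theta$ by ``straightforward calculation'' --- essentially the computation you relegate to your consistency check. You instead argue at the operator level, using that $\nu^*$ is intrinsically the spectral measure of the central element $x = 1 - p' - q' + p'q' + q'p'$: symmetry of this expression in $p',q'$ gives (1), the identity $x \mapsto 1-x$ under a single flip $q'\mapsto 1-q'$ (or $p'\mapsto 1-p'$) gives (2) via the pushforward $t\mapsto 1-t$, i.e. $\theta\mapsto \pi/2-\theta$, and composing gives (3). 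Your computations check out ($p(1-q)p + (1-p)q(1-p) = p+q-pq-qp = 1-x$, and $(\cos^2)^{-1}(1-t) = \pi/2 - (\cos^2)^{-1}(t)$), and your bookkeeping points (i)--(iii) correctly handle the invariance of $e$ and $\epsilon$ and the reparametrization induced by $p\mapsto \alpha+\alpha'-p$. What your approach buys is an explanation of \emph{why} the symmetry holds, independent of the explicit formula for the density (it would survive even without Proposition \ref{prop:nu_measure}); what the paper's approach buys is brevity, since the density formula is already in hand and the identity is a two-line check. One could add your polynomial identity $f_{1-a,b}(s) = (s-1)^2 f_{a,b}(s/(s-1))$ as the cleanest way to organize the paper's calculation.
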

\begin{proof}
	\begin{enumerate}
		\item Swapping the roles of $p$ and $q$ amounts to swapping $a$ and $b$ in the formula for $f(x)$. But, the formula for $f(x)$ is symmetric with respect to $a, b$, so $\nu$ is fixed. 
		\item Since $f$ is symmetric with respect to changing $a$ and $b$, it suffices to check just $a \mapsto 1 - a$. Denote $f$ by $f_{a, b}$ to refer to the coefficients. For this, we just need to check the identity: 
		\begin{equation}
			\sqrt{f_{1 - a, b}(\sec^2 \theta)} \cot \theta = \sqrt{f_{a, b} (\csc^2  \theta)  } \tan \theta 
		\end{equation}
		for $\theta \in (0, \pi / 2)$. As $\cot \theta, \tan \theta > 0$ for $\theta \in (0, \pi / 2)$, it is equivalent to check 
		\begin{equation}
			f_{1 - a, b} (\sec^2 \theta) \cot^2 \theta = f_{a, b} (\csc^2 \theta) \tan^2 \theta \,.
		\end{equation}
		Checking this identity is a straightforward calculation.
		
		\item Follows from applying 2. twice. 
	\end{enumerate}
\end{proof}

Figure \ref{fig:cor:symmetry} illustrates the behavior in Corollary \ref{cor:symmetry} where the Brown measure of $X = p + i q$ is approximated by the ESD of $X_n = P_n + i Q_n$ for deterministic $P_n, Q_n \in M_n(\C)$. Note the symmetry between the two ESDs.

\begin{figure}[H]
	\centering
	\begin{subfigure}{0.45 \textwidth}
		\centering
		\includegraphics[width = .9 \textwidth]{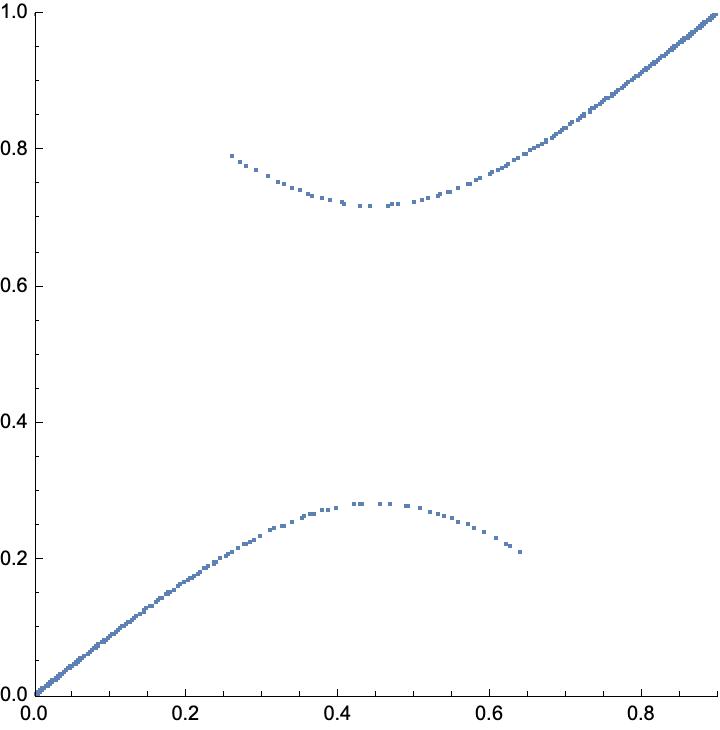}
		\caption{ESD of $X_n = P_n + i Q_n$ \\ $\mu_{P_n} = (4/5) \delta_0 + (1/5) \delta_{9/10}$ \\ $\mu_{Q_n} = (1 / 5) \delta_0 + (4/5) \delta_1$ \\ $n = 1000$}
	\end{subfigure}
	\hfill
	\begin{subfigure}{0.45 \textwidth}
		\centering
		\includegraphics[width = .9 \textwidth]{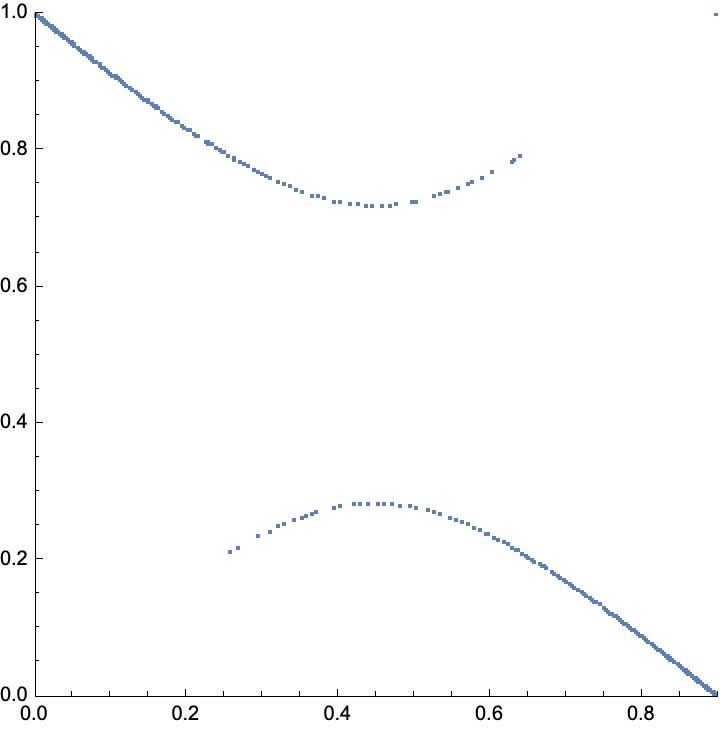}
		\caption{ESD of $X_n = P_n + i Q_n$ \\ $\mu_{P_n} = (1/5) \delta_0 + (4/5) \delta_{9/10}$ \\ $\mu_{Q_n} = (1 / 5) \delta_0 + (4/5) \delta_1$ \\ $n = 1000$}
	\end{subfigure}
	\caption{ESDs for Corollary \ref{cor:symmetry}}
	\label{fig:cor:symmetry}
\end{figure} 

Finally, we consider when the density of $\mu'$ extends all the way to the 4 corners of the intersection of the hyperbola with the boundary of the rectangle: 

\begin{corollary}	
	\label{cor:support_boundary}
	Let $\mu$ be the Brown measure of $p + i q$ where $p$ and $q$ have $2$ atoms
	\begin{enumerate}
		\item $\mu'$ has density extending to $\alpha' + i \beta$ and $\alpha + i \beta'$ if and only if $a = b$.
		\item $\mu'$ has density extending to $\alpha + i \beta$ and $\alpha' + i \beta'$ if and only if $a = 1 - b$.
		\item $\mu'$ has density extending to all $4$ corners of the intersection of the hyperbola with the boundary of the rectangle if and only if $a = b = 1/2$. Hence, the support of $\mu$ is equal to $H \cap R$ if and only if $a = b = 1/2$.
	\end{enumerate}
\end{corollary}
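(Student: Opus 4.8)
The plan is to read off everything from the location of $\text{supp}(\nu)$ inside $[0,\pi/2]$. Throughout I interpret ``$\mu'$ has density extending to a corner $c$'' as ``$c \in \text{supp}(\mu')$'' (equivalently, the relatively open subset of $H \cap R$ on which $\mu'$ has strictly positive density accumulates at $c$); this is the right notion here, since the arc-length density of $\mu'$ actually blows up (integrably) at such a corner rather than converging to a finite value. Recall from Theorem \ref{thm:brown_measure_p+iq} that $\mu' = \tfrac12\big((\lambda_1)_*(\nu) + (\lambda_2)_*(\nu)\big)$, that by Corollary \ref{cor:brown_measure_support} each $\lambda_i$ parameterizes one of the two components of $H \cap R$ and is continuous and injective (the injectivity being noted in the proof of Corollary \ref{cor:atoms}), hence a homeomorphism onto that component, and that $\nu$ is absolutely continuous, so atomless. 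Consequently $\text{supp}(\mu') = \lambda_1(\text{supp}\,\nu) \cup \lambda_2(\text{supp}\,\nu)$. Substituting $\cos(2\theta) = \pm 1$ into the formulas for $\lambda_1,\lambda_2$ and using $\mathscr{A}^2 - \mathscr{B}^2 \pm 2i\mathscr{A}\mathscr{B} = (\mathscr{A} \pm i\mathscr{B})^2$ together with the fact that the principal square root of $w^2$ equals $w$ or $-w$ according to the sign of $\Re w$ (and the analogous identities $\mathscr{B}^2 - \mathscr{A}^2 \mp 2i\mathscr{A}\mathscr{B} = (\mathscr{B} \mp i\mathscr{A})^2$ in the regime $\Abs{\mathscr{A}} < \Abs{\mathscr{B}}$), a short case check over the sign patterns of $\mathscr{A},\mathscr{B}$ gives, in both regimes,
\[
\{\lambda_1(0),\lambda_2(0)\} = \{\alpha + i\beta,\ \alpha' + i\beta'\}, \qquad \{\lambda_1(\pi/2),\lambda_2(\pi/2)\} = \{\alpha + i\beta',\ \alpha' + i\beta\}.
\]
Since these are four distinct corners, each an endpoint of exactly one of the two arcs, a corner $c$ lies in $\text{supp}(\mu')$ if and only if the corresponding parameter ($0$ for $\alpha + i\beta$ and $\alpha' + i\beta'$, and $\pi/2$ for $\alpha + i\beta'$ and $\alpha' + i\beta$) lies in $\text{supp}(\nu)$.

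It remains to determine when $0 \in \text{supp}(\nu)$ and when $\pi/2 \in \text{supp}(\nu)$. From the density formula in Theorem \ref{thm:brown_measure_p+iq}, $\frac{d\nu}{d\theta}(\theta) > 0$ precisely when $f(\sec^2\theta) < 0$ and vanishes when $f(\sec^2\theta) \ge 0$; since $\theta \mapsto \sec^2\theta$ is an increasing bijection $(0,\pi/2) \to (1,\infty)$ and $f$ — which is never constant for $a,b \in (0,1)$ — changes sign only at its roots, $\text{supp}(\nu)$ is the closure in $[0,\pi/2]$ of $\{\theta \in (0,\pi/2) : f(\sec^2\theta) < 0\}$. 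Hence $\pi/2 \in \text{supp}(\nu)$ iff $f(x) < 0$ for all sufficiently large $x$: when $a \ne b$ the leading coefficient $(a-b)^2 > 0$ forces $f(x) \to +\infty$, while when $a = b$ the linear coefficient $4ab - 2(a+b) = -4a(1-a) < 0$ forces $f(x) \to -\infty$; thus $\pi/2 \in \text{supp}(\nu) \iff a = b$. Likewise $0 \in \text{supp}(\nu)$ iff $f(x) < 0$ for $x$ slightly larger than $1$; here $f(1) = (a+b-1)^2 \ge 0$, so this fails when $a+b \ne 1$, while when $a+b = 1$ one computes $f'(1) = -4a(1-a) < 0$, so $f$ does become negative just past $x = 1$; thus $0 \in \text{supp}(\nu) \iff a + b = 1$, i.e.\ $a = 1 - b$.

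Assembling these: the density of $\mu'$ extends to $\alpha' + i\beta$ and $\alpha + i\beta'$ iff $\pi/2 \in \text{supp}(\nu)$ iff $a = b$, which is (1); it extends to $\alpha + i\beta$ and $\alpha' + i\beta'$ iff $0 \in \text{supp}(\nu)$ iff $a = 1 - b$, which is (2); and it extends to all four corners iff $a = b$ and $a = 1 - b$, i.e.\ $a = b = 1/2$. For the final assertion: when $a = b = 1/2$ every $\epsilon_{ij} = 0$ and $\epsilon = 1$, so $\mu = \mu'$, and since $\text{supp}(\nu) = [0,\pi/2]$ in this case we get $\text{supp}(\mu) = \text{supp}(\mu') = H \cap R$ (the inclusion $\subseteq$ being Corollary \ref{cor:brown_measure_support}). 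Conversely, if $a + b \ne 1$ then by Corollary \ref{cor:atoms} exactly one of $\alpha + i\beta, \alpha' + i\beta'$ carries an atom and, by the above, neither lies in $\text{supp}(\mu')$, so the atom-free one of the two lies outside $\text{supp}(\mu)$; similarly if $a \ne b$ one of $\alpha + i\beta', \alpha' + i\beta$ lies outside $\text{supp}(\mu)$. Hence $\text{supp}(\mu) = H \cap R$ forces $a + b = 1$ and $a = b$, i.e.\ $a = b = 1/2$.

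The only step requiring genuine care is the corner bookkeeping in the first paragraph: checking, uniformly over the sign patterns of $\mathscr{A}$ and $\mathscr{B}$ and over both regimes $\Abs{\mathscr{A}} \ge \Abs{\mathscr{B}}$ and $\Abs{\mathscr{A}} < \Abs{\mathscr{B}}$, exactly which corner each $\lambda_i$ attains at $\theta = 0$ and at $\theta = \pi/2$ once the principal branch is pinned down. This is tedious rather than deep, and the remaining computations — the sign analysis of $f$ near $x = 1$ and near $x = \infty$ — are elementary.
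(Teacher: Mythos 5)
Your proposal is correct and follows essentially the same route as the paper's proof: identify the rectangle corners as $\lambda_i(0)$ and $\lambda_i(\pi/2)$, then decide when $\nu$ has mass accumulating at $\theta = 0$ (via $f(1) = (a+b-1)^2$ and the sign of $f$ just past $x=1$) and at $\theta = \pi/2$ (via the leading coefficient $(a-b)^2$ versus the linear coefficient $-4a(1-a)$). The only differences are cosmetic — you use $f'(1) < 0$ where the paper invokes the distinctness of the roots of $f$, and you spell out the final assertion about $\text{supp}(\mu) = H \cap R$ more explicitly than the paper's ``Follows from 1. and 2.''
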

\begin{proof}
	Let $\mathscr{A} = \alpha' - \alpha$ and $\mathscr{B} = \beta' - \beta$.
	\begin{enumerate}
		\item For $z = \alpha' + i \beta, \alpha + i \beta'$, 
		\begin{equation}
			\left( z - \frac{\alpha + \alpha'}{2} - i \frac{\beta + \beta'}{2}  \right)^2  =   \frac{\mathscr{A}^2 - \mathscr{B}^2 - 2 i \mathscr{A} \mathscr{B}}{4} \,.
		\end{equation}
		Hence, $z = \lambda_i(\pi / 2)$.
		
		Thus, it is equivalent to determine for which $a, b$ the measure $\nu$ has density approaching $\pi / 2$. Note that $\nu$ has density approaching $\pi / 2$ if and only if $f(\sec^2 \theta) < 0$ for $\theta \to \pi / 2^-$. Note that $\lim\limits_{\theta \to \pi / 2^-} \sec^2 \theta = \infty$. Recall that $f(z)$ is a polynomial of degree at most $2$. If $f$ is quadratic, then $\lim\limits_{z \to \infty}f(z) = \infty$, so then $\nu$ does not have density approaching $\pi / 2$. Hence, we must have $a = b$. Conversely, if $a = b$, then $f(z) = 1 + 4(a^2 - a) z = 1 + 4 (a - 1)a z$, and the linear term has negative coefficient for $a \in (0, 1)$. Thus, $f(z) < 0$ as $z \to \infty$ and $\nu$ has density approaching $\pi / 2$.
		
		\item For $z = \alpha + i \beta, \alpha' + i \beta'$, 
		\begin{equation}
			\left( z - \frac{\alpha + \alpha'}{2} - i \frac{\beta + \beta'}{2}  \right)^2  =   \frac{\mathscr{A}^2 - \mathscr{B}^2 + 2 i \mathscr{A} \mathscr{B}}{4} \,.
		\end{equation}
		Hence, $z = \lambda_i(0)$.		
		
		Thus, it is equivalent to determine for which $a, b$ the measure $\nu$ has density approaching $0$. Note that $\nu$ has density approaching $0$ if and only if $f(\sec^2 \theta) < 0$ for $\theta \to 0^+$. Since $\lim\limits_{\theta \to 0^+} \sec^2 \theta = 1$, then we need that $f(x) < 0$ for $x \to 1^+$. Since $f(1) = (a + b - 1)^2$, then $f(1) = 0$, i.e. $a = 1 - b$. Conversely, if $a = 1 - b$, then $f(1) = 0$ and since $f$ is either quadratic with positive leading coefficient or $f$ is linear with negative slope, then $f$ must be negative to the right of $1$. Note that in the case where $f$ is quadratic, $f$ cannot have a double root at $1$ (from Proposition \ref{prop:chi_psi_s}).
		
		\item Follows from 1. and 2.
	\end{enumerate}	
\end{proof}

Figure \ref{fig:cor:support_boundary} illustrates the behavior in Corollary \ref{cor:support_boundary} where the Brown measure of $X = p + i q$ is approximated by the ESD of $X_n = P_n + i Q_n$, for deterministic $P_n, Q_n \in M_n(\C)$. The left ESD does not have density approaching the corners of $R$, but the right ESD does. 

\begin{figure}[H]
	\centering
	\begin{subfigure}{0.45 \textwidth}
		\centering
		\includegraphics[width = .9 \textwidth]{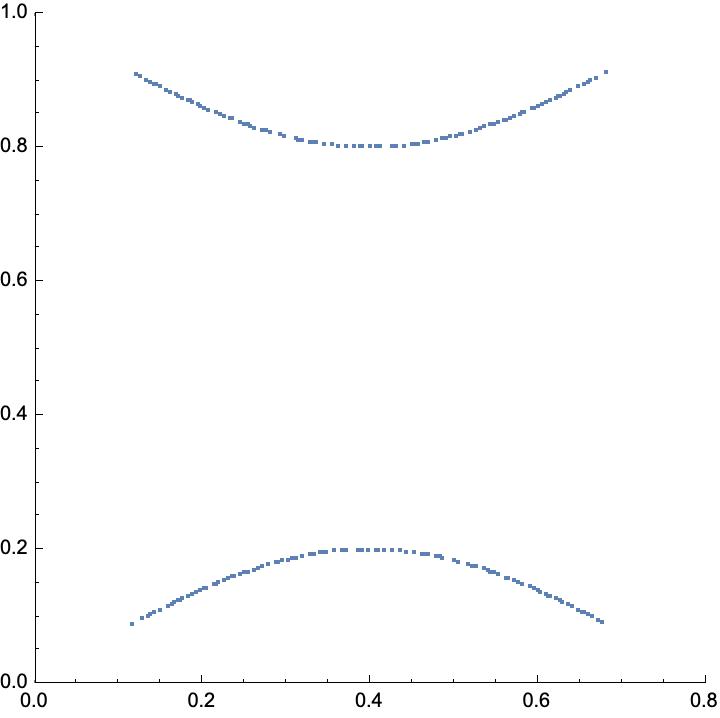}
		\caption{ESD of $X_n = P_n + i Q_n$ \\ $\mu_{P_n} = (9/10) \delta_0 + (1/10) \delta_{4/5}$ \\ $\mu_{Q_n} = (1 / 2) \delta_0 + (1/2) \delta_1$ \\ $n = 1000$}
	\end{subfigure}
	\hfill
	\begin{subfigure}{0.45 \textwidth}
		\centering
		\includegraphics[width = .9 \textwidth]{Figure_4.jpeg}
		\caption{ESD of $X_n = P_n + i Q_n$ \\ $\mu_{P_n} = (4/5) \delta_0 + (1/5) \delta_{9/10}$ \\ $\mu_{Q_n} = (1 / 5) \delta_0 + (4/5) \delta_1$ \\ $n = 1000$}
	\end{subfigure}
	\caption{ESDs for Corollary \ref{cor:support_boundary}}
	\label{fig:cor:support_boundary}
\end{figure}

Finally, we conclude that the Brown measure of $X = p + i q$ uniquely determines the laws of $p$ and $q$. Here we allow $p$ and $q$ to be possibly constant. 

\begin{corollary}
	\label{cor:brown_measure_injective}
	Let $p, q \in (M, \tau)$ be Hermitian and freely independent and
	\begin{equation}
		\begin{aligned}
			\mu_p & = a \delta_\alpha + (1 - a) \delta_{\alpha'} \\
			\mu_q &= b \delta_\beta + (1 - b) \delta_{\beta'} \,.
		\end{aligned}
	\end{equation}
	where $a, b \in [0, 1]$ and $\alpha, \alpha', \beta, \beta' \in \R$. Let $\mu$ be the Brown measure of $X = p + i q$. Then, the assignment $(\mu_p, \mu_q) \mapsto \mu$ is 1 to 1. 
\end{corollary}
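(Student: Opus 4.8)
The plan is to reconstruct the data $(\mu_p,\mu_q)$ from $\mu$ by inverting the description in Theorem \ref{thm:brown_measure_p+iq}; since $p$ and $q$ are free, $(\mu_p,\mu_q)$ is exactly the relevant information. I would first dispose of the degenerate cases. If $\mathrm{supp}(\mu)$ lies in a vertical line $\{\Re z = c\}$, then $p$ is the constant $c$: in the nondegenerate regime $\mathrm{supp}(\mu)$ contains (by Corollary \ref{cor:brown_measure_support}) nondegenerate subarcs of both components of $H\cap R$, and since $H$ — being the locus $(x-\alpha)(x-\alpha')=(y-\beta)(y-\beta')$ with $\alpha\neq\alpha'$, $\beta\neq\beta'$ — is either a genuine hyperbola or a pair of crossing lines, these subarcs are never contained in one line; and if only $q$ is constant then $\mathrm{supp}(\mu)$ consists of two points of distinct real part. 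So a vertical-line support forces $p$ constant, whence $X=c+iq$ is normal and $\mu$ is the law of $c+iq$, giving $\mu_p=\delta_c$ and $\mu_q=(\Im)_*\mu$. The horizontal-line case is symmetric, and a one-point support gives both constant. Hence one reads off from $\mathrm{supp}(\mu)$ which of the three regimes holds, and in the two degenerate ones the reconstruction is immediate; assume from now on that $\mathrm{supp}(\mu)$ lies in no line, which forces $a,b\in(0,1)$, $\alpha\neq\alpha'$, $\beta\neq\beta'$.

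Write $\mu=\mu_{\mathrm{at}}+\mu_{\mathrm{cont}}$ for the atomic and continuous parts; then $\mu_{\mathrm{cont}}=\epsilon\,\mu'$ with $\epsilon=\mu_{\mathrm{cont}}(\C)>0$, so $\mu'$ is recovered. Since $\lambda_1(\theta)+\lambda_2(\theta)$ is the constant trace $(\alpha+\alpha')+i(\beta+\beta')$ of the characteristic polynomial in Proposition \ref{prop:log_delta}, integrating against $\nu$ yields $\int z\,d\mu'(z)=\tfrac{\alpha+\alpha'}{2}+i\,\tfrac{\beta+\beta'}{2}=:c$, so the common center $c$ of the hyperbola and rectangle is recovered (equivalently, $\mu'$ is symmetric about $c$ by Corollary \ref{cor:symmetry}(3)). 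Next I would recover the rectangle $R$, i.e. the sets $\{\alpha,\alpha'\}$ and $\{\beta,\beta'\}$, by a case split on the number of atoms of $\mu$, which is $0$, $1$, or $2$ by Corollary \ref{cor:atoms}(6). If $\mu$ has two atoms, their locations are, by Corollary \ref{cor:atoms}(4)--(5), one point of $\{\alpha+i\beta,\ \alpha'+i\beta'\}$ and one of $\{\alpha+i\beta',\ \alpha'+i\beta\}$; in each of the four cases these two points share either their real part or their imaginary part, so together with $c$ they determine $\alpha,\alpha',\beta,\beta'$. If $\mu$ has one atom, it sits at one corner $w$ of $R$, and $\{\alpha,\alpha'\}=\{\Re w,\ 2\Re c-\Re w\}$, $\{\beta,\beta'\}=\{\Im w,\ 2\Im c-\Im w\}$. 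If $\mu$ has no atoms, then $a=b=1/2$ and $\mathrm{supp}(\mu)=H\cap R$ by Corollary \ref{cor:support_boundary}(3), whose bounding box is exactly $R$ (the two arcs of $H\cap R$ sweep out $x$-extent $[\alpha\wedge\alpha',\alpha\vee\alpha']$ and $y$-extent $[\beta\wedge\beta',\beta\vee\beta']$), so $\{\alpha,\alpha'\}$ and $\{\beta,\beta'\}$ are read off directly.

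It remains to recover the weights. A standard property of the Brown measure — or a short computation from the formula of Theorem \ref{thm:brown_measure_p+iq} — gives $\int z\,d\mu(z)=\tau(X)=\tau(p)+i\,\tau(q)$. Since $\alpha\neq\alpha'$ there is a unique probability measure on $\{\alpha,\alpha'\}$ with mean $\tau(p)=\Re\!\int z\,d\mu(z)$, namely $\mu_p$, and likewise $\mu_q$ is the unique measure on $\{\beta,\beta'\}$ with mean $\Im\!\int z\,d\mu(z)$. This recovers $(\mu_p,\mu_q)$ from $\mu$ in every case, establishing injectivity. I expect the main obstacle to be bookkeeping rather than anything conceptual: one must verify carefully that $\mathrm{supp}(\mu)$ detects the degeneracies, and that in the $0$/$1$/$2$-atom split the atoms together with the center $c$ really do pin down the rectangle — this leans on knowing exactly which corners of $R$ carry atoms (Corollary \ref{cor:atoms}) and, in the atom-free case, that the support exhausts $H\cap R$ (Corollary \ref{cor:support_boundary}).
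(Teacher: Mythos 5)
Your proposal is correct and follows the same broad outline as the paper's proof (dispose of the degenerate cases, recover the positions of the atoms of $p$ and $q$ from the geometry of $\mu$, then recover the weights), but the mechanism differs at both substantive steps. For the positions: the paper recovers the hyperbola $H$ from five points of $\mathrm{supp}(\mu')$ and combines its equation with the atom locations (or with Corollary \ref{cor:support_boundary} when $a=b$ or $a+b=1$) to pin down the fourth of $\alpha,\alpha',\beta,\beta'$; you instead extract the common center $c$ from $\int z\,d\mu'$ via the constancy of $\lambda_1+\lambda_2$, and then recover the rectangle through a clean $0/1/2$-atom case split (shared coordinate of the two atoms, reflection of the single atom through $c$, or the bounding box of $H\cap R$ in the atom-free case). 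For the weights: the paper reads them off from Proposition \ref{prop:atoms_weights_relation}, while you use the first moment $\int z\,d\mu=\tau(p)+i\tau(q)$ together with the fact that a probability measure on a known two-point set is determined by its mean; the two are equivalent, and your "short computation from Theorem \ref{thm:brown_measure_p+iq}" in fact reduces to Proposition \ref{prop:atoms_weights_relation}. Your route buys a slightly more algorithmic reconstruction that never needs the explicit equation of $H$; the paper's is shorter because it can lean on the hyperbola equation to supply the missing parameter in one stroke.

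One small point to tighten. When $\Abs{\mathscr{A}}=\Abs{\mathscr{B}}$ the locus $H$ degenerates to two lines of slope $\pm1$ through the center, and your claim that the two subarcs of $H\cap R$ charged by $\mu'$ are "never contained in one line" can fail: for instance with $a+b=1$ and $\Abs{a-b}$ large one finds $\mathrm{supp}(\nu)\subset[0,\pi/4)$, where $\cos(2\theta)>0$, so both $\lambda_1$ and $\lambda_2$ map $\mathrm{supp}(\nu)$ into the single diagonal through $c$. This does not damage the argument — the degenerate components always have slope $\pm1$, so in the nondegenerate regime $\mathrm{supp}(\mu)$ still cannot lie in a vertical or horizontal line, which is all your trichotomy requires — but the standing assumption should be phrased as "$\mathrm{supp}(\mu)$ is contained in no vertical and no horizontal line" rather than "in no line."
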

\begin{proof}
	From Corollary \ref{cor:atoms}, $\mu$ is atomic if and only if one of $p$ or $q$ is constant. In this case, it is easy to determine the weights and atoms of both $\mu_p$ and $\mu_q$ from $\mu$. 
	
	Thus, we may consider $\mu$ which is the Brown measure of $X = p + i q$ where $p$ and $q$ are not constant.
	
	First, we show that $\mu$ determines the positions of the atoms of $p$ and $q$. Since the support of $\mu'$ on $H \cap R$ contains at least $5$ points, then the equation of $H$ is uniquely determined. If $a = b$ or $a + b = 1$, then from Corollary \ref{cor:support_boundary}, we can determine the positions of the atoms of $p$ and $q$. Thus, assume that $a \neq b$ and $a + b \neq 1$. From Corollary \ref{cor:atoms}, $\mu$ has $2$ atoms at the points $\{\alpha + i \beta, \alpha' + i \beta, \alpha + i \beta', \alpha' + i \beta' \}$. From these two points, at least $3$ of $\alpha, \alpha', \beta, \beta'$ are determined. To determine the last one, we can use the equation of the hyperbola and look at either the coefficient of $x$ or $y$. Thus, $\mu$ determines the positions of the atoms of $p$ and $q$. 
	
	We can determine the weights of the atoms of $p$ and $q$ directly from Proposition \ref{prop:atoms_weights_relation}.
\end{proof}

\newpage

\printbibliography

\end{document}